\documentclass[11pt]{amsart}
\usepackage{amsmath, amssymb}
\usepackage{enumerate}
\usepackage{color}
\usepackage[colorlinks=true, pdfstartview=FitV, linkcolor=blue,citecolor=blue, urlcolor=blue]{hyperref}
\usepackage[all, knot]{xy}
\usepackage{tikz}
\usepackage{diagbox}
\usepackage{float}
\usetikzlibrary{arrows.meta}
\renewcommand{\baselinestretch}{1.2}
\setlength\marginparwidth{2truecm}
\addtolength{\topmargin}{-1cm}
\addtolength{\textheight}{2cm}
\addtolength{\evensidemargin}{-1.5cm}
\addtolength{\oddsidemargin}{-1.5cm}
\addtolength{\textwidth}{3cm}
\numberwithin{equation}{section}
\newtheorem{Thm}{Theorem}[section]
\newtheorem{Prop}[Thm]{Proposition}

\newtheorem{Lem}[Thm]{Lemma}
\theoremstyle{definition}
\newtheorem{defn}[Thm]{Definition}
\newtheorem{Ex}[Thm]{Example}
\newtheorem{Rmk}[Thm]{Remark}
\newcommand{\Z}{\mathbb{Z}}
\newcommand{\BB}{B}
\newcommand{\ep}{\varepsilon}
\newcommand{\wt}{{\rm wt}}

\title[Young wall construction of level-1 highest weight crystals]
{Young wall construction of level-1 highest weight crystals over  
$U_q(D_4^{(3)})$ and $U_q(G_2^{(1)})$}

\author[Zhaobing Fan]{Zhaobing Fan${}^*$}
\address{Harbin Engineering University,
	Harbin, China}
\email{fanzhaobing@hrbeu.edu.cn}
\thanks{${}^*$The corresponding author. {All authors contribute equally.}}

\author[Shaolong Han]{Shaolong Han}
\address{Harbin Engineering University,
	Harbin, China}
\email{algebra@hrbeu.edu.cn}
\thanks{}

\author[Seok-Jin Kang]{Seok-Jin Kang}
\address{Korea Research Institute of Arts and Mathematics,
	Asan-si, Chungcheongnam-do, 31551, Republic of Korea}
\email{soccerkang@hotmail.com}
\thanks{}

\author[Yong-Su Shin]{Yong-Su Shin${}^*$}
\thanks{}
\address{Department of Mathematics, Sungshin Women's University,  Seoul, 02844, Republic of  Korea}
\email{ysshin@sungshin.ac.kr}

\keywords{exceptional quantum affine algebra, perfect crystal, Young wall realization}

\subjclass[2010] {17B37, 17B67, 16G20}

\begin{document}

\begin{abstract}
With the help of path realization and affine energy function,
we give a Young wall construction of level-1 highest weight crystals
$B(\lambda)$ over $U_{q}(G_{2}^{(1)})$ and $U_{q}(D_{4}^{(3)})$.
Our construction is based on four different shapes of colored 
blocks, $\mathbf O$-block, $\mathbf I$-block, $\mathbf L$-block and $\mathbf{LL}$-block, obtained by cutting the unit cube in three different ways.
\end{abstract}

\maketitle

\begin{center}
	{\it In Memory of Georgia Benkart}
\end{center}

\tableofcontents

\section*{Introduction}
In \cite{Kas90,Kas91}, Kashiwara introduced the \textit{crystal basis} theory over quantum groups $U_q(\mathfrak g)$. The combinatorial features in crystal bases reveal the internal structure of integrable highest weight modules over quantum groups and their tensor product decomposition. 
Therefore, the problem of concrete realization of crystal bases has become one of the most important
research topics in representation theory.

There have been a lot of works on the realization of the highest weight crystal $B(\lambda)$ over $U_{q}(\mathfrak g)$
and the crystal $B(\infty)$ of the negative half $U_{q}^{-}(\mathfrak g)$.
For example, in \cite{KN94}, Kashiwara and Nakashima gave the constructions of crystal bases in terms of generalized Young tableaux for all finite dimensional irreducible modules over $U_q(\mathfrak g)$ of types $A_n$, $B_n$, $C_n$ and $D_n$. 
In \cite{KMN1,KMN2}, Kang {\it et al.} introduced the notion of {\it perfect crystals} for quantum affine algebras
and gave an explicit realization of the highest weight crystals $B(\lambda)$ in terms of {\it paths}. 
In \cite{Kang03}, Kang defined the concept of \textit{Young walls} as a new combinatorial model for realizing level-1 highest weight crystals $B(\lambda)$ for quantum affine algebras of types $A_n^{(1)}$, $A_{2n}^{(2)}$, $D_{n+1}^{(2)}$, $A_{2n-1}^{(2)}, D_n^{(1)}$ and $B_n^{(1)}$. The Young walls consist of
colored blocks with various shapes, and can be viewed as generalizations of colored
Young diagrams. 
In \cite{HKL04}, Hong, Kang, and Lee complemented the Young wall realization of level-$1$ crystals for the quantum affine algebra of type $C_n^{(1)}$. In \cite{KL06, KL09}, Kang and Lee generalized the Young wall realization of highest weight crystals of classical quantum affine algebras to arbitrary level.
For the quantum affine algebras of exceptional types $G_2^{(1)}$ and $D_4^{(3)}$, Kim and Shin gave a combinatorial realization of crystals $B(\lambda)$ and $B(\infty)$ using strip bundles (cf. \cite{KS19,KS22}).
The polyhedral realizations of crystal bases can be found in \cite{NZ97,Nak99}.

The aim of this paper is to give a new realization of level-$1$ highest weight crystals
$B(\lambda)$ for the quantum affine algebras $U_q(D_4^{(3)})$ and $U_q(G_2^{(1)})$. 
We use \textit{perfect crystal} theory to obtain this realization, which is different from Kim and Shin's constructions. In \cite{KMOY,M05,JM11,MMO10,Ya98}, Kashiwara, Jayne, Misra {\it et al}. have studied the perfect crystals
for quantum affine algebras $U_q(D_4^{(3)})$ and $U_q(G_2^{(1)})$. 
The key point of our construction is to build a level-1 perfect crystal in terms of colored blocks.
For this purpose, we introduce four different shapes of colored blocks, 
 $\mathbf O$-block, $\mathbf I$-block, $\mathbf L$-block and $\mathbf{LL}$-block,
 obtained by cutting the unit cube in three different ways.
These new blocks are 1/2 part, 1/4 part, 3/8 part and 3/4 part of the unit cube. 
We paint these blocks with colors $0$, $1$, $2$ and work with these colored blocks 
to construct \textit{Young columns}. 
Unlike the case of classical quantum affine algebras, we cannot give a realization of perfect crystals in terms of Young columns in a straightforward way. To overcome this difficulty, we introduce the concept of equivalence classes of Young columns, which leads to a realization of level-$1$ perfect crystal $B_1$ (resp. $B'_1$) of $U_{q}(D_{4}^{(3)})$ (resp. $U_{q}(G_{2}^{(1)})$).

As in the path realization of highest weight crystals over $U_q(D_4^{(3)})$ and $U_q(G_2^{(1)})$,
we define the ground-state wall in terms of our colored blocks. 
Then we proceed
to define the Young wall as a wall built by stacking colored blocks of various shapes on the ground-state wall. We define the {\it reduced} Young wall using the \textit{affine energy function}.
We calculate the values of energy function on $C_1\otimes C_1$ (resp. $C'_1\otimes C'_1$), 
where $C_1$ (resp. $C'_1$) is the perfect crystal of $U_{q}(D_{4}^{(3)})$ (resp. $U_{q}(G_{2}^{(1)})$) consisting of all equivalence classes of Young columns. 
Then we can use the values of energy function to give a precise description of adjacent columns in reduced Young walls. 
From this point of view, the  combinatorial structure of reduced Young walls is very clear. 
We can define the \textit{tensor product rule} on the set of Young walls. 
Using the affine energy function, we show that the set of all reduced Young walls 
$\mathcal{Y}(\lambda)$ is closed under the action of Kashiwara operators. 
It is straightforward to prove that the set $\mathcal{Y}(\lambda)$ possess an affine crystal structure. 
With the help of path realization, we prove that $\mathcal{Y}(\lambda)$ is isomorphic to $B(\lambda)$ as crystals.

The paper is organized as follows. In Section \ref{Perfect crystal}, we review the definition of quantum affine algebras, perfect crystals and path realization of $B(\lambda)$. In Section \ref{Perfect crystal of G_2^1}, we recall the level-$1$ perfect crystals of quantum affine algebras $U_q(D_4^{(3)})$ and $U_q(G_2^{(1)})$. In Section \ref{Young wall}, we use $\mathbf O$-block, $\mathbf I$-block, $\mathbf L$-block and $\mathbf{LL}$-block to construct Young columns and give a realization of level-$1$ perfect crystals using equivalence classes of Young columns. Then we give the definition of Young walls and reduced Young walls. In Section \ref{Young wall realization}, we show that the
level-$1$ highest weight crystals $B(\lambda)$ are realized as the affine crystals
consisting of reduced Young walls. Finally, we give an
explicit description of the top part of level-$1$ highest weight crystals in terms of reduced Young walls.

\vskip 1em

\noindent
{\bfseries  Acknowledgements.} 
Z. Fan was partially supported by the NSF of China grant 12271120, the NSF of Heilongjiang Province grant JQ2020A001, and the Fundamental Research Funds for the central universities. S.-J.  Kang was supported by China grant YZ2260010601. Y.S. Shin was supported by the Basic Science Research Program of the NRF(Korea) under the grant No.2022R1F1A10633741.

\section{Perfect crystals of quantum affine algebras}\label{Perfect crystal} 
Let us fix some basic notations here.
\begin{itemize}
\item[\textendash] $I = \{0,1,...,n\}$: index set.
\item[\textendash] $A=(a_{ij})_{i,j\in I}$: affine Cartan matrix.
\item[\textendash] $D={\rm diag}\{s_i\in \mathbb Z_{>0}\mid i\in I\}$: diagonal matrix such that $DA$ is symmetric.
\item[\textendash] $P^{\vee} =(\oplus_{i\in I}\mathbb Z h_i) \oplus
\Z d$: dual weight lattice.
\item[\textendash] $\mathfrak{h}=\mathbb C \otimes_{\mathbb Z}
P^{\vee}$: Cartan subalgebra.
\item[\textendash] $P=\{\lambda \in
\mathfrak{h}^* \mid \lambda(P^{\vee})\subset \Z\}$: affine weight lattice.
\item[\textendash] $\Pi^\vee=\{h_i\mid i\in I\}\subset P^\vee$: the set of simple coroots.
\item[\textendash] $\Pi=\{\alpha_i\mid i\in I\}\subset P$: the set of simple roots.
\item[\textendash] $\mathfrak g$: affine Lie algebra associated to the Cartan datum $(A, P^{\vee}, P, \Pi^\vee, \Pi)$.
\item[\textendash] $\delta$, $c$, $\Lambda_i\ (i\in I)$: null root, canonical central element, fundamental weight.
\item[\textendash] $P^{+}$: the set of affine dominant integral weights.
\item[\textendash] $\bar{P}=\oplus_{i\in I}\mathbb Z \Lambda_i$: the set of classical weights.
\item[\textendash] $\bar{P}^{+}$: the set of classical dominant integral weights.
\item[\textendash] $l=\lambda(c)$: level of the affine (classical) dominant integral weight $\lambda$.
	\end{itemize}

We set
\begin{align*}
q_i = q^{s_i},\quad \left[n\right]_i=\dfrac{q_i^n-q_i^{-n}}{q_i-q_i^{-1}},\quad
\left[m\right]_i!=\left[m\right]_i\left[m-1\right]_i \cdots
\left[1\right]_i,\quad \left[0\right]_i!=1,
\end{align*}
where $i\in I$, $n\in\mathbb Z$ and $m\in \mathbb Z_{> 0}$.

The {\it quantum affine algebra} $U_q(\mathfrak{g})$ is the associative algebra over $\mathbb Q(q)$ with unity
generated by $e_i, f_i$ $(i\in I)$ and $q^h$ $( h \in P^{\vee})$
satisfying the following defining relations.
\begin{enumerate}
	
	\item $q^0 = 1,\ q^h q^{h^{\prime}} = q^{h + h^{\prime}}$  for all  $h,h^{\prime} \in P^{\vee}$,
	
	\item  $q^h e_i q^{-h} = q^{\alpha_i(h)}e_i,\ q^h f_i q^{-h} = q^{-\alpha_i(h)}f_i$  for  $h \in P^{\vee}$,
	
	\item  $e_i f_j - f_j e_i = \delta_{ij} \dfrac{K_i - K_i^{-1}}{q_i - q_i^{-1}}$  for  $i,j \in I$,
	
	\item $\displaystyle\sum_{k=0}^{1-a_{ij}}(-1)^k e_{i}^{(1-a_{ij}-k)} e_j e_{i}^{(k)} = 0 $ for $i \ne j$,

	\item $\displaystyle\sum_{k=0}^{1-a_{ij}}(-1)^k f_{i}^{(1-a_{ij}-k)} f_j f_{i}^{(k)} = 0 $  for $i \ne j$,
\end{enumerate}
where $e_i^{(k)}= \frac{e_i^k}{\left[k\right]_{i}!}$,
$f_i^{(k)} = \frac{f_i^k}{\left[ k \right]_{i}!}$ and $K_i = q_i^{h_i}$.

We denote by $U_q'(\mathfrak{g})$ the subalgebra of
$U_q(\mathfrak{g})$ generated by $e_i,f_i,K_i^{\pm1}$ $(i \in I)$.

\begin{defn}
	An {\it affine crystal} (resp. a {\it classical crystal}) is
	a set $\BB$ together with the maps $\wt : \BB \rightarrow P$
	(resp. $\wt: \BB \rightarrow \bar{P}$), $\tilde{e}_i,
	\tilde{f}_i: \BB \rightarrow \BB \cup \{ 0 \}$ and $\varepsilon_i,
	\varphi_i : \BB \rightarrow \mathbb{Z} \cup \{-\infty\}$  $(i \in I)
	$ satisfying the following conditions.
	\begin{enumerate}
		\item  $\varphi_{i}(b) = \varepsilon_{i}(b) + \langle h_i, \wt
		(b) \rangle$ \ for all  $ i \in I$,
		
		\item  $ \wt(\tilde {e}_i b) = \wt (b) + \alpha_i$ \ if  $\tilde{e}_i b \in \BB$,
		
		\item  $ \wt(\tilde{f}_i b) = \wt(b) - \alpha_i$ \ if  $\tilde{f}_ib \in \BB$,
		
		\item  $\varepsilon_i(\tilde{e} _ib) = \varepsilon_i(b) - 1$, \
		$\varphi_i(\tilde{e}_i b) = \varphi_i(b) + 1$ \ if  $\tilde{e}_ib \in
		\BB$,

		\item  $\varepsilon_i(\tilde{f} _ib) = \varepsilon_i(b) + 1$, \
		$\varphi_i(\tilde{f}_i b) = \varphi_i(b) - 1$ \ if  $\tilde{f}_ib \in
		\BB$,
		
		\item  $\tilde{f}_i b = b'$\ if and only if \ $b =
		\tilde{e}_i b'$ for  $b,b'\in \BB,\ i \in I $,
		
		\item  If $\varphi_i(b) = -\infty$  for  $b\in \BB$, then
		$\tilde{e}_i b = \tilde{f}_i b = 0$.
		
	\end{enumerate}
\end{defn}

\begin{defn}
	Let $B$ and $B'$ be affine or classical crystals.
	A {\it crystal morphism} $\Psi : B \rightarrow B'$ is a map
	$\Psi : B \cup \{0\} \rightarrow B' \cup \{0\}$ satisfying the following
	conditions.
	\begin{enumerate}
		\item  $\Psi(0)=0$,
		\item if $b \in B$ and $\Psi(b) \in B'$, then $\wt(\Psi(b))=\wt(b)$, $\varepsilon_i(\Psi(b))=\varepsilon_i(b)$, and $\varphi_i(\Psi(b))=\varphi_i(b)$ for all $i \in I$,
		\item if $b, b' \in B$, $\Psi(b), \Psi(b') \in B'$ and $\tilde{f}_i b=b'$, then $\tilde{f}_i \Psi(b)=\Psi(b')$ and $\Psi(b)=\tilde{e}_i \Psi(b')$ for all $i \in I$.
	\end{enumerate}
	A crystal morphism $\Psi : B \rightarrow B'$ is called an {\it isomorphism} if it is a bijection from $B \cup \{0\}$ to $B' \cup \{0\}$.
\end{defn}

For two crystals $B$ and $B' $, we define the {\it tensor
	product} $B \otimes B'$ to be the set $B \times B' $
whose crystal structure is given by
\begin{equation*}
	\begin{aligned}
		\tilde {e}_i(b \otimes b')&= 
\begin{cases} \tilde {e}_i b \otimes b' &\text{if \;$\varphi_i(b)\ge \varepsilon_i(b')$}, \\[-.5ex] 
			b\otimes \tilde {e}_i b' & \text{if\; $\varphi_i(b) < \varepsilon_i(b')$}, 
\end{cases} \\[-.5ex] 
		\tilde {f}_i(b \otimes b')&= 
\begin{cases} \tilde {f}_i b \otimes b'
			&\text{if \; $\varphi_i(b) > \varepsilon_i(b')$}, \\[-.5ex] 
			b\otimes \tilde {f}_i b' & \text{if \; $\varphi_i(b) \le
				\varepsilon_i(b')$},
\end{cases}\\
%
		\wt(b \otimes b')&= \wt(b)+\wt(b'),\\
		\ep_{i}(b \otimes b')&= \max(\ep_{i}(b), \ep_i(b') -
		\langle h_i, \wt(b) \rangle ),\\
		\varphi_{i}(b \otimes b')& = \max(\varphi_{i}(b'), \varphi_i(b)
		+\langle h_i, \wt(b')\rangle ).
	\end{aligned}
\end{equation*}

Let $\BB$ be a classical crystal. For an element $b \in \BB$, we
define
\begin{eqnarray*}
	\varepsilon(b) = \displaystyle\sum_{i \in I} \varepsilon_i(b)\Lambda_i, &
	\varphi(b) = \displaystyle\sum_{i \in I} \varphi_i(b)\Lambda_i.
\end{eqnarray*}

\begin{defn}
	Let $l$ be a positive integer. A classical crystal $\BB$ is
	called a \textit{perfect crystal of level $l$} if
	
	\begin{enumerate}
		\item  there exists an irreducible finite dimensional $U'_q (\mathfrak{g})$-module with a crystal basis whose crystal graph is isomorphic to $\BB$,
		
		\item  $\BB \otimes \BB$ is connected,
		
		\item  there exists a classical weight $ \lambda_0 \in \bar{P}$
		such that 
		$$\displaystyle \wt(\BB) \subset \lambda_0 + \sum_{i \ne 0}
		\mathbb{Z}_{\le0} \: \alpha_i, \quad
		\#(\BB_{\lambda_0})=1,$$ 
		where
		$\BB_{\lambda_0}=\{ b \in \BB ~ | ~ \wt(b)=\lambda_0 \}$,
		
		\item  for any $b \in \BB ,\ \varepsilon(b)(c)
		\ge l $,
		
		\item  for any $ \lambda \in \bar{P} ^{+}$ with $\lambda(c)= l$, there
		exist  unique $ b^{\lambda}\in B $ and $b_{\lambda} \in B$ such that
		$\varepsilon(b^{\lambda})  = \varphi(b_{\lambda})= \lambda.$
		
	\end{enumerate}
\end{defn}

The vectors $b^\lambda$ and $b_\lambda$ are called the \textit{minimal vectors}.

\begin{Thm} [{\cite{KMN1}}]
	Let $\BB$ be a perfect crystal of level $l$ ($l \in
	\mathbb{Z}_{>0}$). For any $\lambda \in \bar{P}^+$ with
	$\lambda(c)= l$, there exists a unique classical crystal
	isomorphism
	\begin{equation*}
		\begin{aligned}
			\Psi :\BB(\lambda) \stackrel{\sim}{\longrightarrow}
			\BB(\ep(b_\lambda)) \otimes \BB &
			&\text{given by}& &  u_\lambda  \longmapsto  u_{\ep(b_\lambda)} \otimes b_\lambda ,
		\end{aligned}
	\end{equation*}
	where $u_\lambda$ is the highest weight vector in $\BB(\lambda)$ and $b_\lambda$ is the unique vector in $\BB$ such that $\varphi(b_{\lambda})=\lambda $.
\end{Thm}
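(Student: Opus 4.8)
The plan is to set $\mu := \ep(b_\lambda)$, to show that $u_\mu \otimes b_\lambda$ is the \emph{unique} highest weight vector of $\BB(\mu)\otimes\BB$ (where $\mu = \ep(b_\lambda)$) and that it has weight $\lambda$, and then to invoke the structure theory of normal crystals to identify the whole tensor product with $\BB(\lambda)$. Note first that $\mu = \sum_i \ep_i(b_\lambda)\Lambda_i \in \bar P^+$, and that pairing $\varphi(b)=\ep(b)+\wt(b)$ with the central element $c$ gives $\varphi(b)(c)=\ep(b)(c)$ for every $b$, since a finite-dimensional $U_q'(\mathfrak g)$-module has level $0$, i.e.\ $\langle c,\wt(b)\rangle=0$; in particular $\mu(c)=\ep(b_\lambda)(c)=\varphi(b_\lambda)(c)=\lambda(c)=l$.

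I would begin with the routine verifications. For the weight, the relation $\varphi_i(b)=\ep_i(b)+\langle h_i,\wt(b)\rangle$ together with $\wt(b_\lambda)=\sum_i\langle h_i,\wt(b_\lambda)\rangle\Lambda_i$ (valid because $\wt(b_\lambda)\in\bar P=\oplus_i\Z\Lambda_i$) yields $\wt(b_\lambda)=\varphi(b_\lambda)-\ep(b_\lambda)=\lambda-\mu$, so $\wt(u_\mu\otimes b_\lambda)=\mu+(\lambda-\mu)=\lambda$. For annihilation by $\tilde e_i$, I compute $\varphi_i(u_\mu)=\langle h_i,\mu\rangle=\ep_i(b_\lambda)$, so the tensor product rule routes $\tilde e_i$ onto the first factor, and $\tilde e_i u_\mu=0$ gives $\tilde e_i(u_\mu\otimes b_\lambda)=0$ for all $i$.

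The crux is the uniqueness of the highest weight vector, and I expect this to be the main obstacle, as it is precisely where the perfect crystal axioms are indispensable. Suppose $c\otimes b$ is killed by every $\tilde e_i$. In the tensor product rule the case $\varphi_i(c)<\ep_i(b)$ cannot yield a highest weight vector, since it would force $\tilde e_i b=0$, hence $\ep_i(b)=0\le\varphi_i(c)$, contradicting $\varphi_i(c)<\ep_i(b)$; therefore for every $i$ we must have $\tilde e_i c=0$ and $\varphi_i(c)\ge\ep_i(b)$. The first forces $c=u_\mu$, and the second gives $\ep_i(b)\le\langle h_i,\mu\rangle$ for all $i$. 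Pairing with $c$ and using $\ep(b)(c)=\varphi(b)(c)$, I combine $\ep(b)(c)\le\mu(c)=l$ with the perfectness bound $\ep(b)(c)\ge l$ from axiom (4) to conclude $\ep(b)(c)=l$, and since all $a_i^\vee>0$ this forces $\ep(b)=\mu$. Finally the uniqueness clause of axiom (5) identifies $b$ with the unique vector whose $\ep$-value is $\mu$, which is $b_\lambda$ itself; thus $u_\mu\otimes b_\lambda$ is the only highest weight vector of $\BB(\mu)\otimes\BB$.

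To conclude, I would observe that $\BB(\mu)\otimes\BB$ is a normal crystal, being a tensor product of crystals of integrable $U_q'(\mathfrak g)$-modules, and therefore decomposes as a disjoint union of highest weight crystals, each component carrying exactly one highest weight vector. Since there is a single highest weight vector in total, the tensor product is connected and isomorphic to $\BB(\lambda)$, the isomorphism being the crystal morphism determined by $u_\lambda\mapsto u_\mu\otimes b_\lambda$. Uniqueness of $\Psi$ is then immediate: any crystal morphism out of $\BB(\lambda)$ is determined by the image of $u_\lambda$, which must be a highest weight vector of weight $\lambda$ and is thus forced to equal $u_\mu\otimes b_\lambda$.
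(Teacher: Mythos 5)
You should first note that the paper itself contains no proof of this statement: it is quoted from \cite{KMN1} and used as a black box, so there is no in-paper argument to compare yours against. Judged on its own merits, your proof follows the standard route of \cite{KMN1}: the weight computation $\wt(b_\lambda)=\varphi(b_\lambda)-\ep(b_\lambda)=\lambda-\mu$, the check that $u_\mu\otimes b_\lambda$ is a maximal vector, and above all the uniqueness argument --- ruling out the case $\varphi_i(c)<\varepsilon_i(b)$ by normality of the two factors, forcing $c=u_\mu$, then squeezing $\ep(b)(c)$ between the perfectness bound of axiom (4) and $\mu(c)=l$, with positivity of the coefficients of $c$ in the $h_i$ giving $\ep(b)=\mu$ componentwise, and axiom (5) finally giving $b=b_\lambda$ --- is correct, and it is exactly the combinatorial heart of the theorem.

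The gap is in your last paragraph. The assertion that $\BB(\mu)\otimes \BB$ decomposes as a disjoint union of highest weight crystals \emph{because} it is a tensor product of crystals of integrable $U_q'(\mathfrak g)$-modules is false as stated: $\BB\otimes \BB$ is also such a tensor product, yet by perfectness axiom (2) it is connected, and being a finite crystal it cannot be a disjoint union of highest weight crystals (over an affine algebra every $\BB(\nu)$ with $\nu\neq 0$ dominant is infinite, and a union of copies of $\BB(0)$ is totally disconnected). What makes $\BB(\mu)\otimes \BB$ special is that $\mu$ has positive level $l$ and its weights are bounded above, so it is the crystal base of an integrable $U_q'(\mathfrak g)$-module of positive level in the appropriate category $\mathcal O$; complete reducibility of such modules into irreducible highest weight modules $V(\nu)$ is what yields the decomposition of the crystal, and establishing this is precisely the substantial representation-theoretic content of \cite{KMN1}. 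Your argument is complete modulo invoking that theorem with its correct hypotheses; without it, the step from ``unique maximal vector'' to ``connected, hence isomorphic to $\BB(\lambda)$'' does not follow.
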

Set 
$$
\lambda_0 =\lambda, \quad \lambda_{k+1}=\ep(b_{\lambda_k}), \quad b_0=b_{\lambda_0}, \quad b_{k+1}=b_{\lambda_{k+1}}.
$$
Applying the above crystal isomorphism repeatedly, we obtain a sequence of crystal isomorphisms
\begin{equation*}
	\begin{array}{ccccccc} \BB(\lambda) &
		\stackrel{\sim}{\longrightarrow} & \BB(\lambda_1) \otimes \BB &
		\stackrel{\sim}{\longrightarrow} & \BB(\lambda_2)\otimes \BB \otimes
		\BB & \stackrel{\sim}{\longrightarrow} & \cdots
		\\
		u_\lambda & \longmapsto & u_{\lambda_1} \otimes b_0 & \longmapsto &
		u_{\lambda_2} \otimes b_1 \otimes b_0 & \longmapsto & \cdots.
\end{array} \end{equation*}
In this process, we obtain an infinite sequence $\mathbf{p}_\lambda =(b_k)^{\infty} _{k=0} \in \BB^{\otimes \infty}
$, which is called the \textit{ground-state path of weight} $\lambda$.

\medskip

Set 
$$
\mathcal{P}(\lambda):=\{\mathbf{p}=(p(k))^{\infty}_{k=0} \in \BB^{\otimes \infty} \;|\; p(k) \in \BB,\ p(k)=b_k \ \text{for all} \ k \gg 0  \}.
$$
The elements in $\mathcal{P}(\lambda)$ are called the $\lambda$-\textit{paths}. The following result gives the {\it path realization} of $\BB(\lambda)$.

\begin{Prop}[{\cite{KMN1}}]\label{prop:path realization}
	There exists an isomorphism of classical crystals
	\begin{equation*}
			\Psi_{\lambda} :\BB(\lambda) \stackrel{\sim}{\longrightarrow}
			\mathcal{P}({\lambda})\ \
			\text{given by}\ \ u_\lambda \longmapsto\mathbf{p}_\lambda,
	\end{equation*}
	where $u_{\lambda}$ is the highest weight vector in $B(\lambda)$.
	\label{path_realization}
\end{Prop}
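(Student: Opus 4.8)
The plan is to realize $\Psi_\lambda$ as the stable limit of the iterated isomorphisms displayed before the statement. For $N\ge 1$ let $\Theta_N\colon \BB(\lambda)\stackrel{\sim}{\longrightarrow}\BB(\lambda_N)\otimes\BB^{\otimes N}$ denote the composite of the first $N$ arrows, so that $\Theta_N(u_\lambda)=u_{\lambda_N}\otimes b_{N-1}\otimes\cdots\otimes b_0$; each $\Theta_N$ is an isomorphism of classical crystals by the preceding theorem, since applying a crystal isomorphism to one tensor factor (and the identity on the others) preserves isomorphisms and the tensor product of crystals is associative. The towers for consecutive $N$ are compatible: writing $\Psi^{(N)}\colon\BB(\lambda_N)\stackrel{\sim}{\longrightarrow}\BB(\lambda_{N+1})\otimes\BB$ for the single-step isomorphism, one has $\Theta_{N+1}=(\Psi^{(N)}\otimes\mathrm{id}_{\BB^{\otimes N}})\circ\Theta_N$. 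Hence, if $\Theta_N(b)=u'\otimes p(N-1)\otimes\cdots\otimes p(0)$ and $\Psi^{(N)}(u')=u''\otimes p(N)$, then $\Theta_{N+1}(b)=u''\otimes p(N)\otimes p(N-1)\otimes\cdots\otimes p(0)$, so the rightmost $N$ factors are untouched. For each fixed $k$ the $k$-th entry from the right therefore stabilizes once $N\ge k+1$; denoting this stable value by $p(k)$, I define $\Psi_\lambda(b):=(p(k))_{k=0}^\infty\in\BB^{\otimes\infty}$, and $\Psi_\lambda(u_\lambda)=\mathbf p_\lambda$ by construction.

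The crux is to show that $\Psi_\lambda(b)$ genuinely lies in $\mathcal P(\lambda)$, i.e.\ that $p(k)=b_k$ for all $k\gg0$. Since $\BB(\lambda)$ is a highest weight crystal, every $b$ is reached from $u_\lambda$ by a finite string of lowering operators $\tilde f_i$, with $\wt(u_\lambda)-\wt(b)=\sum_i c_i\alpha_i$ and $d:=\sum_i c_i<\infty$. As $\Theta_N$ is a crystal morphism, $\Theta_N(b)$ is obtained by applying that same string to $\Theta_N(u_\lambda)=u_{\lambda_N}\otimes b_{N-1}\otimes\cdots\otimes b_0$. The main obstacle is a \emph{localization estimate} bounding how far to the left the Kashiwara operators can reach. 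Its engine is the matching identity $\varphi_i(b_{k+1})=\langle h_i,\lambda_{k+1}\rangle=\varepsilon_i(b_k)$ for all $i,k$, which follows from $\varepsilon(b_k)=\lambda_{k+1}=\varphi(b_{k+1})$ and expresses perfectness of $\BB$ through the minimality of the $b_k$. Fed into the tensor product rule, this matching forces a single operator applied to a run of consecutive ground-state entries to act on the rightmost available factor and to advance the active site to the left by at most one position; iterating, the $d$ lowering operators disturb only finitely many of the rightmost factors, their number controlled by $d$. Consequently $p(k)=b_k$ for all sufficiently large $k$, and the head remains $u_{\lambda_N}$ once $N$ is large, which is exactly $\Psi_\lambda(b)\in\mathcal P(\lambda)$. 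Making this reach bound precise is the technical heart of the proof.

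It remains to check that $\Psi_\lambda$ is an isomorphism of classical crystals. Here $\mathcal P(\lambda)$ carries the crystal structure induced from finite truncations: for $\mathbf p$ with $p(k)=b_k$ for $k\ge N$, the maps $\wt,\varepsilon_i,\varphi_i$ and the operators $\tilde e_i,\tilde f_i$ are computed on the finite truncation $u_{\lambda_N}\otimes p(N-1)\otimes\cdots\otimes p(0)\in\BB(\lambda_N)\otimes\BB^{\otimes N}$ by the finite tensor product rule, the localization estimate guaranteeing the answer is independent of the large $N$ chosen. With this description, the identities $\wt\circ\Psi_\lambda=\wt$, $\varepsilon_i\circ\Psi_\lambda=\varepsilon_i$, $\varphi_i\circ\Psi_\lambda=\varphi_i$ and the intertwining of $\tilde e_i,\tilde f_i$ all reduce, for fixed $b$, to the corresponding properties of a single $\Theta_N$ with $N$ large, hence hold. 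For bijectivity, given $\mathbf p\in\mathcal P(\lambda)$ pick $N$ with $p(k)=b_k$ for $k\ge N$ and set $b:=\Theta_N^{-1}\bigl(u_{\lambda_N}\otimes p(N-1)\otimes\cdots\otimes p(0)\bigr)$; applying the single-step isomorphisms to the highest weight head $u_{\lambda_N}$ regenerates the ground-state tail $b_N,b_{N+1},\dots$, so $\Psi_\lambda(b)=\mathbf p$, while the compatibility of the $\Theta_N$ shows $b$ is independent of $N$ and the injectivity of $\Theta_N$ shows $b$ is unique. Thus $\Psi_\lambda$ is a bijective crystal morphism, completing the argument.
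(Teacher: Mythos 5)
The paper itself contains no proof of this proposition: it is recalled verbatim from \cite{KMN1}, so there is no internal argument to compare against. Your proposal is, in substance, a correct reconstruction of the standard proof from that source: the tower of isomorphisms $\Theta_N$, their compatibility $\Theta_{N+1}=(\Psi^{(N)}\otimes\mathrm{id})\circ\Theta_N$, the resulting stabilization of the rightmost tensor factors, and the bijectivity argument at the end are all sound, and the crystal structure on $\mathcal P(\lambda)$ via truncations is exactly how the path space is handled in the literature.

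The one step you defer -- the localization estimate -- is indeed the technical heart, but the engine you name is the right one and it closes the gap cleanly. Since $\varepsilon(b_k)=\lambda_{k+1}=\varphi(b_{k+1})$, in the signature of the ground-state prefix $u_{\lambda_N}\otimes b_{N-1}\otimes\cdots\otimes b_{k+1}$ the $+$'s contributed by each factor (there are $\langle h_i,\lambda_{m}\rangle$ of them for the factor $b_{m-1}$, resp.\ for $u_{\lambda_N}$) cancel exactly against the $-$'s of the factor immediately to their right, so after cancellation all surviving $+$'s of that prefix sit at its rightmost entry. Hence, if a path agrees with $\mathbf p_\lambda$ at every position $\geq k$, any $\tilde f_i$ acts at a position $\leq k$; by induction, a product of $d$ lowering operators disturbs only positions $0,\dots,d-1$ and leaves the head $u_{\lambda_N}$ intact once $N\geq d$. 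This is precisely the bound you asserted ("the active site advances left by at most one position per operator"), so your sketch, once this signature computation is written out, is a complete and correct proof along the same lines as \cite{KMN1}.
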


\section{Perfect crystals of $U_q(D_4^{(3)})$ and $U_q(G_2^{(1)})$} \label{Perfect crystal of G_2^1}
Let us recall the  Cartan datum for affine Lie algebras of types $D_4^{(3)}$ and $G_2^{(1)}$. Let $I=\{0,1,2\}$. We denote by
$$
\{h_0, h_1, h_2\},\quad \{\alpha_0, \alpha_1, \alpha_2\},\quad \{\Lambda_0, \Lambda_1, \Lambda_2\}
$$
the set of 
simple coroots, simple roots and fundamental weights, respectively.

The Cartan matrix for affine Lie algebras of types $D_4^{(3)}$ and $G_2^{(1)}$ are given as follows.

\begin{equation*}
D_4^{(3)}:\ \left(
\begin{array}{rrr}
	2  & -1 & 0  \\
	-1 & 2  & -3 \\
	0  & -1 & 2
\end{array}
\right) \quad\quad		
G_2^{(1)}:\ \left(
\begin{array}{rrr}
	2  & -1 & 0  \\
	-1 & 2  & -1 \\
	0  & -3 & 2
\end{array}
\right)
\end{equation*}

and their Dynkin diagrams are depicted as follows.
\[
D_4^{(3)}:\ 	
\xymatrix@R=.0ex{
\circ\ar@{-}[r]&\circ&\circ\ar@3{->}[l]	\\
	0&1&2
}\quad\quad
G_2^{(1)}: \
\xymatrix@R=.0ex{
	\circ\ar@{-}[r]&\circ&\circ\ar@3{<-}[l]\\
	0&1&2
}
\]

The null root $\delta$ and the canonical central element $c$ are 
given by
\begin{align*}
&D_4^{(3)}:\ \delta=\alpha_0+2\alpha_1+\alpha_2, \quad c=h_0+2h_1+3h_2,\\
&G_2^{(1)}:\ \delta=\alpha_0+2\alpha_1+3\alpha_2, \quad c=h_0+2h_1+h_2.
\end{align*}

The level-$l$ perfect crystals $B_{l}$ of $U_q(D_4^{(3)})$ and $U_q(G_2^{(1)})$ were constructed in \cite{KMOY} and \cite{JM11}, respectively.

Now we consider the level-$1$ dominant integral weight $\lambda$, i.e., $l=\lambda(c)=1$. Then the level-$1$ dominant integral weight of $U_q(D_4^{(3)})$ is $\Lambda_0$ and the level-$1$ dominant integral weights of $U_q(G_2^{(1)})$ are $\Lambda_0$ and $\Lambda_2$.

\begin{Ex}[{\cite{{KMOY}}}] \label{ex:level 1 perfect crystal D43}
	 Let $B_1$ be the level-$1$ perfect crystal of $U_q(D_4^{(3)})$. We denote the elements of $B_1$ by 
	\begin{figure}[ht] 
		\vskip -1pc
		\begin{align*}
			u_1&=(1,0,0,0,0,0),& 
			u_2&=(0,1,0,0,0,0),& 
			u_3&=(0,0,2,0,0,0),\\
		    u_0&=(0,0,1,1,0,0),&
		u_{\bar{3}}&=(0,0,0,2,0,0),&    
			u_{\bar{2}}&=(0,0,0,0,1,0),\\
		u_{\bar{1}}&=(0,0,0,0,0,1), &
			u_\phi&=(0,0,0,0,0,0).
		\end{align*}
		\vskip -1pc
		\caption{The elements of $B_1$ }
	\end{figure}
	
	
	The crystal graph of $B_1$ is given as follows.
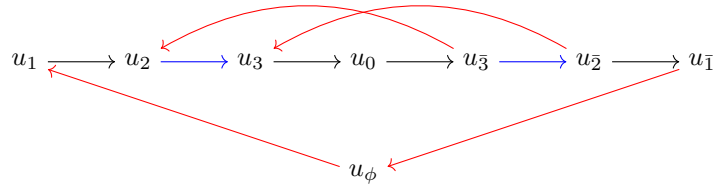
\begin{figure}[H] 
	{\small
		\centering
			\begin{tikzpicture}[scale=1.5]
				\node (u1) at (0,0) {$u_1$};	
				\node (u2) at (1,0) {$u_2$};
				\node (u3) at (2,0) {$u_3$};
				\node (u0) at (3,0) {$u_0$};
				\node (u3bar) at (4,0) {$u_{\bar{3}}$};
				\node (u2bar) at (5,0) {$u_{\bar{2}}$};
				\node (u1bar) at (6,0) {$u_{\bar{1}}$};
				\node (phi) at (3,-1) {$u_\phi$};
				\draw[black,->]  (u1) -- (u2);
				\draw[blue,->]  (u2) -- (u3);
				\draw[black,->]  (u3) -- (u0);
				\draw[black,->]  (u0) -- (u3bar);
				\draw[blue,->]  (u3bar) -- (u2bar); 
				\draw[black,->]  (u2bar) -- (u1bar);
				\draw[red, ->, bend right]  (u2bar) to node [swap] {} (u3);
				\draw[red, ->, bend right]  (u3bar) to node [swap] {} (u2);
					\draw[red,->]  (u1bar) -- (phi);
				\draw[red,->]  (phi) -- (u1);
					\end{tikzpicture}
	}
	\vskip -1pc
	\caption{The perfect graph of $B_1$}
\end{figure} 	

Here and in the sequel, we use the red arrows to represent the action of Kashiwara operator $\tilde{f}_0$, the black arrows to represent the action of Kashiwara operator $\tilde{f}_1$ and the blue arrows to represent the action of Kashiwara operator $\tilde{f}_2$.

For $\lambda=\Lambda_0$, the minimal element is $u_\phi$ and the $\Lambda_0$-ground state path is $\mathbf p_{\Lambda_0}=(\cdots\otimes \phi\otimes \phi\otimes \phi)$. 
\end{Ex}

\begin{Ex}[{\cite{{JM11}}}] \label{ex:level 1 perfect crystal G21}
	 Let $B'_1$ be the level-$1$ perfect crystal of $U_q(G_2^{(1)})$. We denote the elements of $B'_1$ by 	
	\begin{align*}
		v_0& =(0,0,0,0,0,0), & v_1&=(1,0,0,0,0,0), &  v_2&=(0,1,0,0,0,0),\\
		v_3&=(0,\frac{2}{3},\frac{2}{3},0,0,0), & v_4&=(0,\frac{1}{3},\frac{4}{3},0,0,0), & v_5&=(0,\frac{1}{3},\frac{1}{3},1,0,0),\\
		v_6&=(0,0,2,0,0,0), & v_7&=(0,0,1,1,0,0), & v_{\bar{7}}&=(0,\frac{1}{3},\frac{1}{3},\frac{1}{3},\frac{1}{3},0),\\
		v_{\bar{6}}&=(0,0,0,2,0,0), & v_{\bar{5}}&=(0,0,1,\frac{1}{3},\frac{1}{3},0), & v_{\bar{4}}&=(0,0,0,\frac{4}{3},\frac{1}{3},0),\\
		v_{\bar{3}}&=(0,0,0,\frac{2}{3},\frac{2}{3},0), & v_{\bar{2}}&=(0,0,0,0,1,0), & v_{\bar{1}}&=(0,0,0,0,0,1).
	\end{align*}
	The crystal graph of $B'_1$ is given as follows.
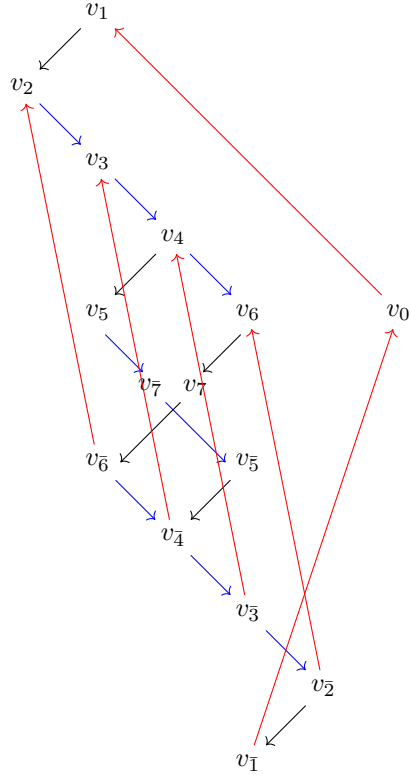
\begin{figure}[H] 
	{\small
		\centering
	\begin{tikzpicture}[scale=1]
	\node (v1) at (0,0) {$v_1$};	
	\node (v2) at (-1,-1) {$v_2$};
	\node (v3) at (0,-2) {$v_3$};
	\node (v4) at (1,-3) {$v_4$};
	\node (v5) at (0,-4) {$v_5$};
	\node (v6) at (2,-4) {$v_6$};
    \node (v7) at (1.3,-5) {$v_7$}; 
    \node (v7bar) at (0.7,-5) {$v_{\bar{7}}$};
    \node (v6bar) at (0,-6) {$v_{\bar{6}}$};
    \node (v5bar) at (2,-6) {$v_{\bar{5}}$};
    \node (v4bar) at (1,-7) {$v_{\bar{4}}$};
    \node (v3bar) at (2,-8) {$v_{\bar{3}}$};
    \node (v2bar) at (3,-9) {$v_{\bar{2}}$};
    \node (v1bar) at (2,-10) {$v_{\bar{1}}$};
    \node (v0) at (4,-4) {$v_0$};
    \draw[black,->]  (v1) -- (v2);
    \draw[black,->]  (v2bar) -- (v1bar);
    \draw[blue,->]  (v2) -- (v3);
    \draw[blue,->]  (v3) -- (v4);
    \draw[blue,->]  (v4) -- (v6); 
    \draw[blue,->]  (v6bar) -- (v4bar);
    \draw[blue,->]  (v4bar) -- (v3bar);
    \draw[blue,->]  (v3bar) -- (v2bar);
    \draw[black,->]  (v4) -- (v5);
    \draw[black,->]  (v5bar) -- (v4bar);
    \draw[red,->]  (v1bar) -- (v0);
    \draw[red,->]  (v0) -- (v1);
    \draw[blue,->]  (0.1,-4.3) -- (0.6,-4.8);
     \draw[black,->]  (1.9,-4.3) -- (1.4,-4.8);
     \draw[blue,->]  (0.9,-5.2) -- (1.7,-6);
     \draw[black,->]  (1.1,-5.2) -- (0.3,-6);    
     \draw[red,->]  (v6bar) -- (v2);  
    \draw[red,->]  (v4bar) -- (v3);   
     \draw[red,->]  (v3bar) -- (v4);   
     \draw[red,->]  (v2bar) -- (v6); 
\end{tikzpicture}
	}
\vskip -1pc
\caption{The perfect graph of $B_1$}
\end{figure} 

For $\lambda=\Lambda_0$, the minimal element is $v_0$ and the $\Lambda_0$-ground state path is $\mathbf p_{\Lambda_0}=(\cdots\otimes v_0\otimes v_0\otimes v_0)$. 

For $\lambda=\Lambda_2$, the minimal element is $v_{\bar{7}}$ and the $\Lambda_2$-ground state path is $\mathbf p_{\Lambda_2}=(\cdots\otimes v_{\bar{7}}\otimes v_{\bar{7}}\otimes v_{\bar{7}})$.

\end{Ex}

\section{Young wall model for $U_q(D_4^{(3)})$ and $U_q(G_2^{(1)})$}\label{Young wall}
\subsection{Young columns}
We shall cut the unit cube in three ways to obtain the blocks of four different shapes. We describe the cutting process graphically as follows.

\begin{figure}[H]
\small
\begin{center}
	\begin{tikzpicture}[scale=0.8]
		\draw (0,0)--(1,0)--(1,1)--(0,1)--(0,0);
		\draw (1,0)--(1.5,0.5)--(1.5,1.5)--(1,1);
		\draw (0,1)--(0.5,1.5)--(1.5,1.5);
		\draw [->] (2,0.8)--(3,0.8);
		\node at (2.5,1.2) {cut};
		\begin{scope}[shift={(3.5,0)}]	
			\draw (0,0)--(1,0)--(1,1)--(0,1)--(0,0);
			\draw (1,0)--(1.5,0.5)--(1.5,1.5)--(1,1);
			\draw (0,1)--(0.5,1.5)--(1.5,1.5);
		\draw (0,0.5)--(0.5,0.5);
		\draw (0.5,0)--(0.5,1)--(1,1.5);
			    \draw [->] (2,2)--(3,2.7);
				\draw [->] (2,1)--(3,1);
				\draw [->] (2,0)--(3,-0.7);
			    \node at (2.6,2) {split};		
				\node at (2.6,0.6) {split};
				\node at (2.6,-1) {split};
		\end{scope}	
		\begin{scope}[shift={(7,2)}]	
			\draw (0,0.5)--(0.5,0.5)--(0.5,1)--(0,1)--(0,0.5);
			\draw (0.5,0.5)--(1,1)--(1,1.5)--(0.5,1);
			\draw (0,1)--(0.5,1.5)--(1,1.5);
			
		\end{scope}	
	\begin{scope}[shift={(7,0.5)}]	
	\draw (0,0)--(0.5,0)--(0.5,0.5)--(0,0.5)--(0,0);
	\draw (0.5,0)--(1,0.5)--(1,1)--(0.5,0.5);
	\draw (0,0.5)--(0.5,1)--(1,1);

	\end{scope}	
		\begin{scope}[shift={(6.5,-2)}]	
		\draw (0.5,0)--(1,0)--(1,1)--(0.5,1)--(0.5,0);
		\draw (1,0)--(1.5,0.5)--(1.5,1.5)--(1,1);
		\draw (0.5,1)--(1,1.5)--(1.5,1.5);
		\node at (3.5,0.5) {$\mathbf O$-block};
	\end{scope}	
	\end{tikzpicture}
\end{center}

\begin{center}
	\begin{tikzpicture}[scale=0.7]
	\node at (2.5,1.2) {};
	\end{tikzpicture}
	\end{center} 
%
\begin{center}
	\begin{tikzpicture}[scale=0.8]
		\draw (0,0)--(1,0)--(1,1)--(0,1)--(0,0);
		\draw (1,0)--(1.5,0.5)--(1.5,1.5)--(1,1);
		\draw (0,1)--(0.5,1.5)--(1.5,1.5);
		\draw [->] (2,0.8)--(3,0.8);
		\node at (2.5,1.2) {cut};
		\begin{scope}[shift={(3.5,0)}]	
		\draw (0,0)--(1,0)--(1,1)--(0,1)--(0,0);
		\draw (1,0)--(1.5,0.5)--(1.5,1.5)--(1,1);
		\draw (0,1)--(0.5,1.5)--(1.5,1.5);
		\draw (0.25,1.25)--(0.75,1.25);
		\draw (1,1.5)--(0.5,1)--(0.5,0.5)--(1,0.5)--(1.25,0.75)--(1.25,0.25);
			\draw [->] (2,2)--(3,2.7);
			\draw [->] (2,1)--(3,1);
			\draw [->] (2,0)--(3,-0.7);
			\node at (2.6,2) {split};		
			\node at (2.6,0.6) {split};
			\node at (2.6,-1) {split};
		\end{scope}	
		\begin{scope}[shift={(7,2)}]	
		\draw (0.25,0.25)--(0.75,0.25)--(0.75,1.25)--(0.25,1.25)--(0.25,0.25);
		\draw (0.75,0.25)--(1,0.5)--(1,1.5)--(0.75,1.25);
		\draw (0.25,1.25)--(0.5,1.5)--(1,1.5);
			\node at (3,0.85) {$\mathbf I$-block};
		\end{scope}	
		\begin{scope}[shift={(7,0.5)}]	
		\draw (0,0)--(1,0)--(1,0.5)--(0.5,0.5)--(0.5,1)--(0,1)--(0,0);
		\draw (0,1)--(0.25,1.25)--(0.75,1.25)--(0.5,1);
		\draw (0.5,0.5)--(0.75,0.75)--(0.75,1.25);
		\draw (0.75,0.75)--(1.25,0.75)--(1,0.5);
		\draw (1.25,0.75)--(1.25,0.25)--(1,0);
			\node at (3,0.5) {$\mathbf L$-block};
		\end{scope}	
		\begin{scope}[shift={(6.5,-2)}]	
		\draw (0.5,0.5)--(1,0.5)--(1,1)--(0.5,1)--(0.5,0.5);
		\draw (0.5,1)--(1,1.5)--(1.5,1.5)--(1,1);
		\draw (1.5,1.5)--(1.5,0.5)--(1.25,0.25)--(0.75,0.25)--(0.75,0.5);
		\draw (1,0.5)--(1.25,0.75)--(1.25,0.25);
		\end{scope}	
	\end{tikzpicture}
\end{center}

\begin{center}
	\begin{tikzpicture}[scale=0.7]
		\node at (2.5,1.2) {};
	\end{tikzpicture}
\end{center} 

\begin{center}
	\begin{tikzpicture}[scale=0.8]
		\draw (0,0)--(1,0)--(1,1)--(0,1)--(0,0);
		\draw (1,0)--(1.5,0.5)--(1.5,1.5)--(1,1);
		\draw (0,1)--(0.5,1.5)--(1.5,1.5);
		\draw [->] (2,0.8)--(3,0.8);
		\node at (2.5,1.2) {cut};
		\begin{scope}[shift={(3.5,0)}]	
			\draw (0,0)--(1,0)--(1,1)--(0,1)--(0,0);
			\draw (1,0)--(1.5,0.5)--(1.5,1.5)--(1,1);
			\draw (0,1)--(0.5,1.5)--(1.5,1.5);
			\draw (0.25,1.25)--(0.75,1.25)--(0.5,1)--(0.5,0);
			\draw [->] (2,1.2)--(3,1.7);
			\node at (2.6,1.1) {split};
			\draw [->] (2,0.4)--(3,-0.1);
			\node at (2.6,-0.4) {split};
		\end{scope}	
		\begin{scope}[shift={(7,1.2)}]	
			\draw (0.5,1.5)--(0.25,1.25)--(0.75,1.25)--(0.5,1)--(1,1)--(1.5,1.5)--(0.5,1.5);
		\draw (0.25,1.25)--(0.25,0.25)--(0.5,0.25);
		\draw (0.5,1)--(0.5,0)--(1,0)--(1,1);
		\draw (1,0)--(1.5,0.5)--(1.5,1.5);
		\node at (3,0.75) {$\mathbf{LL}$-block};
		\end{scope}	
		\begin{scope}[shift={(7.3,-1.2)}]	
				\draw (0,0)--(0.5,0)--(0.5,1)--(0,1)--(0,0);
			\draw (0,1)--(0.25,1.25)--(0.75,1.25)--(0.5,1);
			\draw (0.75,1.25)--(0.75,0.25)--(0.5,0);		
				\end{scope}	
		
	\end{tikzpicture}
\end{center}
\caption{The cutting process of the unit cube}
\end{figure}
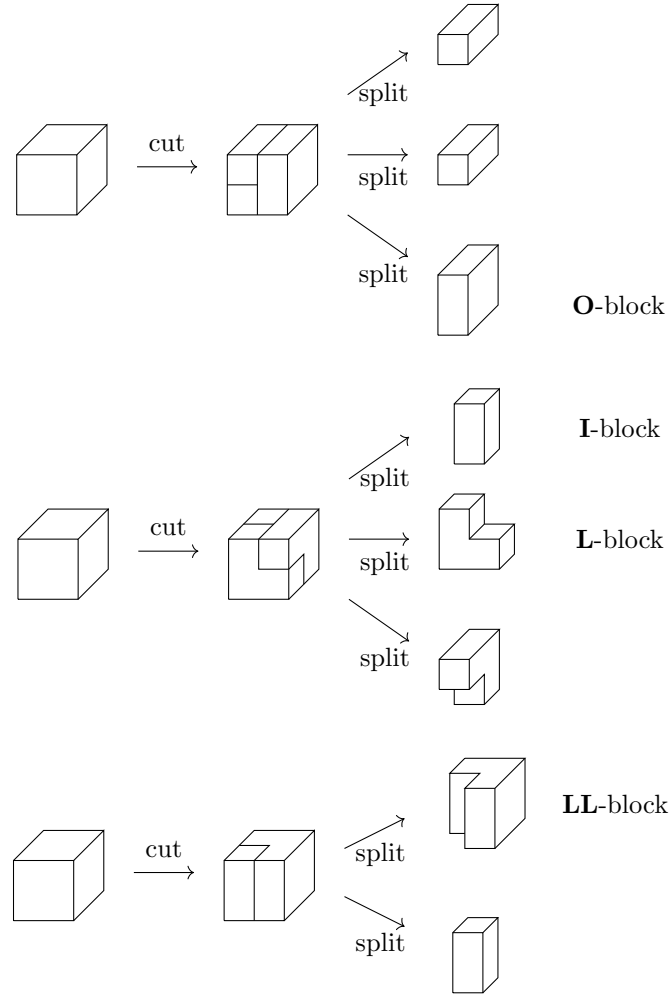

Using the first way of cutting, we obtain two $1/4$ parts and one $1/2$ part of the unit cube. Using the second way of cutting, we obtain one $1/4$ parts and two $3/8$ parts of the unit cube. Using the third way of cutting, we obtain one $3/4$ parts and one $1/4$ parts of the unit cube.

We shall use $\mathbf O$-block, $\mathbf I$-block, $\mathbf L$-block and $\mathbf{LL}$-block in the above picture to construct Young columns and Young walls.

We will use the following colored blocks.
\begin{figure}[H] 
	{\small
		\centering
	\begin{tikzpicture}[scale=0.8]
\begin{scope}[shift={(0,0)}]	
	\draw (0,0)--(0.5,0)--(0.5,1)--(0,1)--(0,0);
	\draw (0.5,0)--(0.75,0.25)--(0.75,1.25)--(0.25,1.25)--(0,1);
	\draw (0.5,1)--(0.75,1.25);
		\node at (0.25,0.5){$0$};
\end{scope}			
		
\begin{scope}[shift={(2,0)}]	
	\draw (0,0)--(0.5,0)--(0.5,1)--(0,1)--(0,0);
	\draw (0.5,0)--(1,0.5)--(1,1.5)--(0.5,1);
	\draw (0,1)--(0.5,1.5)--(1,1.5);
			\node at (0.25,0.5){$1$};
\end{scope}		

\begin{scope}[shift={(4,0)}]	
	\draw (0,0)--(0.5,0)--(0.5,1)--(0,1)--(0,0);
\draw (0.5,0)--(0.75,0.25)--(0.75,1.25)--(0.25,1.25)--(0,1);
\draw (0.5,1)--(0.75,1.25);
\node at (0.25,0.5){$1$};
\end{scope}		

\begin{scope}[shift={(6,0)}]	
	\draw (0,0)--(0.5,0)--(0.5,1)--(0,1)--(0,0);
	\draw (0.5,0)--(0.75,0.25)--(0.75,1.25)--(0.25,1.25)--(0,1);
	\draw (0.5,1)--(0.75,1.25);
	\node at (0.25,0.5){$2$};
\end{scope}		

\begin{scope}[shift={(8,0)}]	
\draw (0,0)--(1,0)--(1,0.5)--(0.5,0.5)--(0.5,1)--(0,1)--(0,0);
\draw (0,1)--(0.25,1.25)--(0.75,1.25)--(0.5,1);
\draw (0.5,0.5)--(0.75,0.75)--(0.75,1.25);
\draw (0.75,0.75)--(1.25,0.75)--(1,0.5);
\draw (1.25,0.75)--(1.25,0.25)--(1,0);
	\node at (0.25,0.5){$2$};
\end{scope}	

	\begin{scope}[shift={(10,0)}]	
	\draw (0.5,1.5)--(0.25,1.25)--(0.75,1.25)--(0.5,1)--(1,1)--(1.5,1.5)--(0.5,1.5);
	\draw (0.25,1.25)--(0.25,0.25)--(0.5,0.25);
	\draw (0.5,1)--(0.5,0)--(1,0)--(1,1);
	\draw (1,0)--(1.5,0.5)--(1.5,1.5);
	\node at (0.75,0.5){$2$};
\end{scope}		
	\end{tikzpicture}
	}
\caption{The colored blocks}
\end{figure}
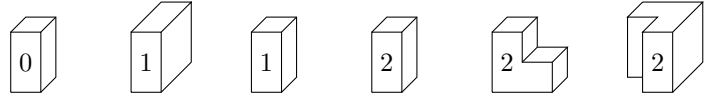 

The block with color $i$ is called an $i$-block.

We denote by

{\small
	\begin{center}
		\begin{tikzpicture}[scale=0.8]
			\draw (0,0)--(1,0)--(1,1)--(0,1)--(0,0);
			\draw (1,0)--(1.5,0.5)--(1.5,1.5)--(1,1);
			\draw (0,1)--(0.5,1.5)--(1.5,1.5);
			\draw (0.25,1.25)--(1.25,1.25)--(1.25,0.25);
			\node at (1.8,0.75) {=};
			\node at (1.8,-0.5) {(a)};
			\node at (0.5,0.5) {$1$};
			\node at (1.375,0.875) {$1$};
			\begin{scope}[shift={(2.1,0.3)}]		
				\draw (0,0)--(1,0)--(1,1)--(0,1)--(0,0);
				\draw (0,0)--(1,1);
				\node at (0.75,0.3) {$1$};
				\node at (0.25,0.7) {$1$};
			\end{scope}	
			\begin{scope}[shift={(4,0)}]
				\draw (0,0)--(1,0)--(1,1)--(0,1)--(0,0);
				\draw (1,0)--(1.5,0.5)--(1.5,1.5)--(1,1);
				\draw (0,1)--(0.5,1.5)--(1.5,1.5);
				\draw (0.25,1.25)--(0.75,1.25)--(0.5,1)--(0.5,0);
				\node at (0.25,0.5) {$0$};
				\node at (0.75,0.5) {$2$};
				\node at (1.8,0.75) {=};
				\node at (1.8,-0.5) {(b)};
				\begin{scope}[shift={(2.1,0.3)}]		
					\draw (0,0)--(1,0)--(1,1)--(0,1)--(0,0);
					\draw (0,0)--(0.5,1);
					\node at (0.2,0.75) {$0$};
					\node at (0.65,0.4) {$2$};
				\end{scope}	
			\end{scope}
			\begin{scope}[shift={(8,0)}]
				\draw (0,0)--(1,0)--(1,1)--(0,1)--(0,0);
				\draw (1,0)--(1.5,0.5)--(1.5,1.5)--(1,1);
				\draw (0,1)--(0.5,1.5)--(1.5,1.5);
				\draw (1,1.5)--(0.75,1.25)--(1.25,1.25)--(1.25,0.25);
				\node at (1.8,0.75) {=};
				\node at (1.8,-0.5) {(c)};
				\node at (0.5,0.5) {$2$};
				\node at (1.375,0.875) {$0$};
				\begin{scope}[shift={(2.1,0.3)}]		
					\draw (0,0)--(1,0)--(1,1)--(0,1)--(0,0);
					\draw (0.5,0)--(1,1);
					\node at (0.8,0.25) {$0$};
					\node at (0.35,0.6) {$2$};
				\end{scope}	
			\end{scope}
		
\begin{scope}[shift={(0,-3)}]	
\begin{scope}[shift={(0,0)}]
	\draw (0,0)--(1,0)--(1,1)--(0,1)--(0,0);
	\draw (1,0)--(1.5,0.5)--(1.5,1.5)--(1,1);
	\draw (0,1)--(0.5,1.5)--(1.5,1.5);
	\draw (0,0.5)--(0.5,0.5);
	\draw (0.5,0)--(0.5,1)--(1,1.5);
	\node at (1.8,0.75) {=};
	\node at (1.8,-0.5) {(d)};
	\node at (0.25,0.75) {$2$};
	\node at (0.25,0.25) {$1$};
	\node at (0.75,0.5) {$1$};
	\begin{scope}[shift={(2.1,0.3)}]		
		\draw (0,0)--(1,0)--(1,1)--(0,1)--(0,0);
		\draw (0,0.5)--(0.5,0.5);
		\draw (0.5,0)--(0.5,1);
		\node at (0.25,0.75) {$2$};
		\node at (0.25,0.25) {$1$};
		\node at (0.75,0.5) {$1$};
	\end{scope}	
\end{scope}	
\begin{scope}[shift={(4,0)}]
	\draw (0,0)--(1,0)--(1,1)--(0,1)--(0,0);
	\draw (1,0)--(1.5,0.5)--(1.5,1.5)--(1,1);
	\draw (0,1)--(0.5,1.5)--(1.5,1.5);
	\draw (0.5,0.5)--(1,0.5); \draw (1,0.5)--(1.5,1);
	\draw (0.5,0)--(0.5,1)--(1,1.5);
	\node at (1.8,0.75) {=};
	\node at (1.8,-0.5) {(e)};
	\node at (0.75,0.75) {$2$};
	\node at (0.75,0.25) {$1$};
	\node at (0.25,0.5) {$1$};
	\begin{scope}[shift={(2.1,0.3)}]		
		\draw (0,0)--(1,0)--(1,1)--(0,1)--(0,0);
		\draw (0.5,0.5)--(1,0.5);
		\draw (0.5,0)--(0.5,1);
		\node at (0.75,0.75) {$2$};
		\node at (0.75,0.25) {$1$};
		\node at (0.25,0.5) {$1$};
	\end{scope}	
\end{scope}
\begin{scope}[shift={(8,0)}]
	\draw (0,0)--(1,0)--(1,1)--(0,1)--(0,0);
	\draw (1,0)--(1.5,0.5)--(1.5,1.5)--(1,1);
	\draw (0,1)--(0.5,1.5)--(1.5,1.5);
	\draw (0.25,1.25)--(0.75,1.25);
	\draw (1,1.5)--(0.5,1)--(0.5,0.5)--(1,0.5)--(1.25,0.75)--(1.25,0.25);
	\node at (1.8,0.75) {=};
	\node at (0.25,0.5){$2$}; 	\node at (0.75,0.75){$2$};\node at (0.625,1.375){\tiny{$0$}};	\node at (1.8,-0.5) {(f)};
	\begin{scope}[shift={(2.1,0.3)}]		
		\draw (0,0)--(1,0)--(1,1)--(0,1)--(0,0);
		\draw (0.5,0)--(1,1);\draw (0,0)--(1,1);
		\node at (0.85,0.25) {$2$};	\node at (0.48,0.25) {$0$};
		\node at (0.3,0.7) {$2$};
	\end{scope}	
\end{scope}
\begin{scope}[shift={(12,0)}]
	\draw (0,0)--(1,0)--(1,1)--(0,1)--(0,0);
	\draw (1,0)--(1.5,0.5)--(1.5,1.5)--(1,1);
	\draw (0,1)--(0.5,1.5)--(1.5,1.5);
	\draw (0.5,0)--(0.5,1)--(1,1.5);
	\node at (1.8,0.75) {=};
	\draw (0.75,1.25)--(1.25,1.25);
	\draw (1.25,1.25)--(1.25,0.25);
	\node at (0.25,0.5) {$2$};
	\node at (0.75,0.5) {$0$};
	\node at (1.375,0.875) {$2$};
	\node at (1.8,-0.5) {(g)};
	\begin{scope}[shift={(2.1,0.3)}]		
		\draw (0,0)--(1,0)--(1,1)--(0,1)--(0,0);
		\draw (0,0)--(0.5,1);\draw (0,0)--(1,1);
		\node at (0.15,0.75) {$2$};	\node at (0.53,0.75) {$0$};
		\node at (0.7,0.3) {$2$};
	\end{scope}	
\end{scope}
\end{scope}		
		\end{tikzpicture}
\end{center}}

In the above picture, for (b) and (c), we can visualize that one of them can be obtained by rotating another one $180^{\circ}$ horizontally. Similarly, for (d) (resp. (f)) and (e) (resp. (g)), we can visualize that one of them can be obtained by rotating another one $180^{\circ}$ horizontally.

Now we introduce the {\it column pattern} for $U_q(D_4^{(3)})$ and $U_q(G_2^{(1)})$ as follows (see Figure~\ref{column pattern}).

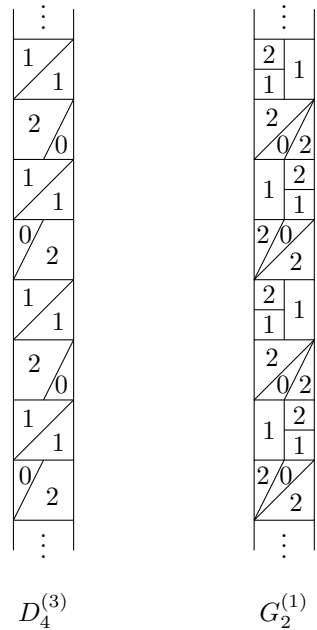
\begin{figure}[H]
{\small 
\begin{center}
	\begin{tikzpicture}[scale=0.8]
		
\begin{scope}[shift={(-2,0)}]
\node at (0.5,-1.5) {$D_4^{(3)}$};	
	\draw (0,0)--(0,-0.5);
\draw (1,0)--(1,-0.5);
\node at (0.5,-0.3) {$\vdots$};
\draw (0,0)--(1,0)--(1,1)--(0,1)--(0,0);
\draw (0,0)--(0.5,1);
\node at (0.2,0.75) {$0$};
\node at (0.65,0.4) {$2$};	
\draw (1,1)--(1,2)--(0,2)--(0,1);
\draw (0,1)--(1,2);
\node at (0.75,1.3) {$1$};
\node at (0.25,1.7) {$1$};
\draw (1,2)--(1,3)--(0,3)--(0,2);
\draw (0.5,2)--(1,3);
\node at (0.8,2.25) {$0$};
\node at (0.35,2.6) {$2$};
\draw (1,3)--(1,4)--(0,4)--(0,3);
\draw (0,3)--(1,4);
\node at (0.75,3.3) {$1$};
\node at (0.25,3.7) {$1$};
\begin{scope}[shift={(0,4)}]
	\draw (0,4)--(0,4.5);
	\draw (1,4)--(1,4.5);
	\node at (0.5,4.5) {$\vdots$};
	\draw (1,0)--(1,1)--(0,1)--(0,0);
	\draw (0,0)--(0.5,1);
	\node at (0.2,0.75) {$0$};
	\node at (0.65,0.4) {$2$};	
	\draw (1,1)--(1,2)--(0,2)--(0,1);
	\draw (0,1)--(1,2);
	\node at (0.75,1.3) {$1$};
	\node at (0.25,1.7) {$1$};
	\draw (1,2)--(1,3)--(0,3)--(0,2);
	\draw (0.5,2)--(1,3);
	\node at (0.8,2.25) {$0$};
	\node at (0.35,2.6) {$2$};
	\draw (1,3)--(1,4)--(0,4)--(0,3);
	\draw (0,3)--(1,4);
	\node at (0.75,3.3) {$1$};
	\node at (0.25,3.7) {$1$};		
\end{scope}
\end{scope}		

\begin{scope}[shift={(2,0)}]
	\node at (0.5,-1.5) {$G_2^{(1)}$};			
		\draw (0,0)--(0,-0.5);
		\draw (1,0)--(1,-0.5);
		\node at (0.5,-0.3) {$\vdots$};

	\draw (0,0)--(1,0)--(1,1)--(0,1)--(0,0);
	\draw (0,0)--(0.5,1);\draw (0,0)--(1,1);

	\draw (1,1)--(1,2)--(0,2)--(0,1);
	\draw (0.5,1.5)--(1,1.5);\draw (0.5,1)--(0.5,2);

		\draw (1,2)--(1,3)--(0,3)--(0,2);
	\draw (0.5,2)--(1,3);	\draw (0,2)--(1,3);

	\draw (1,3)--(1,4)--(0,4)--(0,3);
	\draw (0.5,3)--(0.5,4);\draw (0,3.5)--(0.5,3.5);
	\node at (0.15,0.75) {$2$};	
	\node at (0.53,0.75) {$0$};
\node at (0.7,0.3) {$2$};
\node at (0.75,1.75) {$2$};
\node at (0.75,1.25) {$1$};
\node at (0.25,1.5) {$1$};
	\node at (0.85,2.25) {$2$};	
	\node at (0.48,2.25) {$0$};
\node at (0.3,2.7) {$2$};
\node at (0.25,3.75) {$2$};
\node at (0.25,3.25) {$1$};
\node at (0.75,3.5) {$1$};
	\begin{scope}[shift={(0,4)}]
		\draw (0,4)--(0,4.5);
		\draw (1,4)--(1,4.5);
		\node at (0.5,4.5) {$\vdots$};
		\draw (1,0)--(1,1)--(0,1)--(0,0);
		\draw (0,0)--(0.5,1);\draw (0,0)--(1,1);
		
		\draw (1,1)--(1,2)--(0,2)--(0,1);
		\draw (0.5,1.5)--(1,1.5);\draw (0.5,1)--(0.5,2);
		
		\draw (1,2)--(1,3)--(0,3)--(0,2);
		\draw (0.5,2)--(1,3);	\draw (0,2)--(1,3);
		
		\draw (1,3)--(1,4)--(0,4)--(0,3);
		\draw (0.5,3)--(0.5,4);\draw (0,3.5)--(0.5,3.5);
			\node at (0.15,0.75) {$2$};	
		\node at (0.53,0.75) {$0$};
		\node at (0.7,0.3) {$2$};
		\node at (0.75,1.75) {$2$};
		\node at (0.75,1.25) {$1$};
		\node at (0.25,1.5) {$1$};
		\node at (0.85,2.25) {$2$};	
		\node at (0.48,2.25) {$0$};
		\node at (0.3,2.7) {$2$};
		\node at (0.25,3.75) {$2$};
		\node at (0.25,3.25) {$1$};
		\node at (0.75,3.5) {$1$};
		\end{scope}
	\end{scope}
\end{tikzpicture}
\end{center}}
\vskip -1pc
\caption{The column pattern}\label{column pattern}
\end{figure}

\begin{defn}
A {\it Young column} is a continuous part of the column  pattern satisfying the following conditions.
\begin{enumerate}
	\item[(1)] The continuous part can extend to infinity downwards, but cannot extend to infinity upward.
	\item[(2)] If there is only one 1-block at the top of this continuous part, then this 1-block must cover a 0-block.
	\item[(3)] There should be no free space below every block.
\end{enumerate}

\end{defn}

\begin{defn}
	Let $y$ and $y'$ be Young columns. We say that they are \textit{equivalent} if $y'$ can be obtained from $y$ by rotating the whole column ${180}^\circ$ horizontally.
\end{defn}

\begin{defn}
	\begin{enumerate}
		\item 	An $i$-block in a Young column is called \textit{removable} if we can remove the $i$-block and the resulting column is still a Young column.
		\item	A place in a Young column is called an  \textit{admissible} $i$-slot if we can add an $i$-block on this place and the resulting column is still a Young column.
		\item An $i$-block in a Young column is called \textit{second removable} if we need to remove another $i$-block before we can remove it. 
		\item A place in a Young column is called a \textit{second admissible} $i$-slot if we need to add an $i$-block at an admissible $i$-slot before we can add an $i$-block at this place.	
	\end{enumerate}	
\end{defn}

For a Young column $y$, we define the \textit{$i$-signature} of $y$ by $(\underbrace{-,\cdots,-}_r,\underbrace{+,\cdots,+}_a)$, where $r$ is the number of removable and second removable $i$-blocks in $y$ and $a$ is the number of admissible and second admissible $i$-slots in $y$. The $i$-signature of $y$ is denoted by $\text{sign}_i(y)$.

For $U_q(D_4^{(3)})$, we list all possible Young columns and the signature of Young columns as follows.

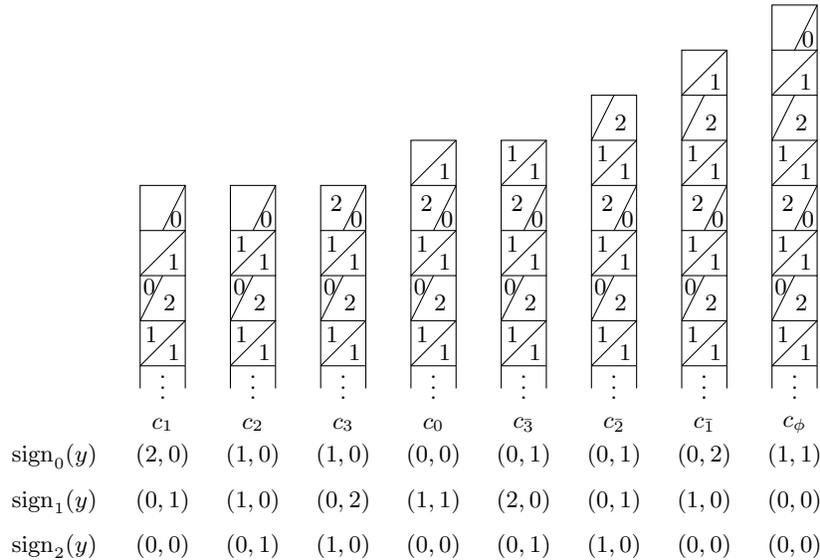
\begin{figure}[H]
	{\footnotesize
		\centering
		\begin{tikzpicture}[scale=0.6]
			\begin{scope}[shift={(-0.9,0)}]
				\node at (-1,-3) {$\text{sign}_0(y)$};
				\node at (-1,-4) {$\text{sign}_1(y)$};
				\node at (-1,-5) {$\text{sign}_2(y)$};
			\end{scope}			
			\draw (0,-1)--(0,-1.5);
			\draw (1,-1)--(1,-1.5);
			\node at (0.5,-1.3) {$\vdots$};
			\node at (0.5,-3) {$(2,0)$};
			\node at (0.5,-4) {$(0,1)$};
			\node at (0.5,-5) {$(0,0)$};
			\draw (0,0)--(0,-1);
			\draw (1,0)--(1,-1);
			\draw (0,-1)--(1,-1);
			\draw (0,-1)--(1,0);
			\node at (0.75,-0.7) {$1$};
			\node at (0.25,-0.3) {$1$};
			\draw (0,0)--(1,0)--(1,1)--(0,1)--(0,0);
			\draw (0,0)--(0.5,1);
			\node at (0.65,0.4) {$2$};	
			\draw (1,1)--(1,2)--(0,2)--(0,1);
			\draw (0,1)--(1,2);
			\node at (0.75,1.3) {$1$};
			\draw (1,2)--(1,3)--(0,3)--(0,2);
			\draw (0.5,2)--(1,3);
			\node at (0.8,2.25) {$0$};
			\node at (0.2,0.75) {$0$};
			\node at (0.5,-2.3) {$c_1$};
			\begin{scope}[shift={(2,0)}]
				\draw (0,-1)--(0,-1.5);
				\draw (1,-1)--(1,-1.5);
				\node at (0.5,-1.3) {$\vdots$};
				\node at (0.5,-3) {$(1,0)$};
				\node at (0.5,-4) {$(1,0)$};
				\node at (0.5,-5) {$(0,1)$};
				\draw (0,0)--(0,-1);
				\draw (1,0)--(1,-1);
				\draw (0,-1)--(1,-1);
				\draw (0,-1)--(1,0);
				\node at (0.75,-0.7) {$1$};
				\node at (0.25,-0.3) {$1$};
				\draw (0,0)--(1,0)--(1,1)--(0,1)--(0,0);
				\draw (0,0)--(0.5,1);
				\node at (0.65,0.4) {$2$};	
				\draw (1,1)--(1,2)--(0,2)--(0,1);
				\draw (0,1)--(1,2);
				\node at (0.75,1.3) {$1$};
				\node at (0.25,1.7) {$1$};
				\draw (1,2)--(1,3)--(0,3)--(0,2);
				\draw (0.5,2)--(1,3);
				\node at (0.8,2.25) {$0$};	
				\node at (0.2,0.75) {$0$};
				\node at (0.5,-2.3) {$c_2$};
			\end{scope}
			\begin{scope}[shift={(4,0)}]
				\draw (0,-1)--(0,-1.5);
				\draw (1,-1)--(1,-1.5);
				\node at (0.5,-1.3) {$\vdots$};
				\node at (0.5,-3) {$(1,0)$};
				\node at (0.5,-4) {$(0,2)$};
				\node at (0.5,-5) {$(1,0)$};
				\draw (0,0)--(0,-1);
				\draw (1,0)--(1,-1);
				\draw (0,-1)--(1,-1);
				\draw (0,-1)--(1,0);
				\node at (0.75,-0.7) {$1$};
				\node at (0.25,-0.3) {$1$};
				\draw (0,0)--(1,0)--(1,1)--(0,1)--(0,0);
				\draw (0,0)--(0.5,1);
				\node at (0.65,0.4) {$2$};	
				\draw (1,1)--(1,2)--(0,2)--(0,1);
				\draw (0,1)--(1,2);
				\node at (0.75,1.3) {$1$};
				\node at (0.25,1.7) {$1$};
				\draw (1,2)--(1,3)--(0,3)--(0,2);
				\draw (0.5,2)--(1,3);
				\node at (0.8,2.25) {$0$};	
				\node at (0.2,0.75) {$0$};
				\node at (0.35,2.6) {$2$};
				\node at (0.5,-2.3) {$c_3$};
			\end{scope}
			\begin{scope}[shift={(6,0)}]
				\draw (0,-1)--(0,-1.5);
				\draw (1,-1)--(1,-1.5);
				\node at (0.5,-1.3) {$\vdots$};
				\node at (0.5,-3) {$(0,0)$};
				\node at (0.5,-4) {$(1,1)$};
				\node at (0.5,-5) {$(0,0)$};
				\draw (0,0)--(0,-1);
				\draw (1,0)--(1,-1);
				\draw (0,-1)--(1,-1);
				\draw (0,-1)--(1,0);
				\node at (0.75,-0.7) {$1$};
				\node at (0.25,-0.3) {$1$};
				\draw (0,0)--(1,0)--(1,1)--(0,1)--(0,0);
				\draw (0,0)--(0.5,1);
				\node at (0.65,0.4) {$2$};	
				\draw (1,1)--(1,2)--(0,2)--(0,1);
				\draw (0,1)--(1,2);
				\node at (0.75,1.3) {$1$};
				\node at (0.25,1.7) {$1$};
				\draw (1,2)--(1,3)--(0,3)--(0,2);
				\draw (1,3)--(1,4)--(0,4)--(0,3);
				\draw (0,3)--(1,4);
				\draw (0.5,2)--(1,3);
				\node at (0.75,3.3) {$1$};
				
				\node at (0.8,2.25) {$0$};	
				\node at (0.2,0.75) {$0$};
				\node at (0.35,2.6) {$2$};
				\node at (0.5,-2.3) {$c_0$};
			\end{scope}
			\begin{scope}[shift={(8,0)}]
				\draw (0,-1)--(0,-1.5);
				\draw (1,-1)--(1,-1.5);
				\node at (0.5,-1.3) {$\vdots$};
				\node at (0.5,-3) {$(0,1)$};
				\node at (0.5,-4) {$(2,0)$};
				\node at (0.5,-5) {$(0,1)$};
				\draw (0,0)--(0,-1);
				\draw (1,0)--(1,-1);
				\draw (0,-1)--(1,-1);
				\draw (0,-1)--(1,0);
				\node at (0.75,-0.7) {$1$};
				\node at (0.25,-0.3) {$1$};
				\draw (0,0)--(1,0)--(1,1)--(0,1)--(0,0);
				\draw (0,0)--(0.5,1);
				\node at (0.65,0.4) {$2$};	
				\draw (1,1)--(1,2)--(0,2)--(0,1);
				\draw (0,1)--(1,2);
				\node at (0.75,1.3) {$1$};
				\node at (0.25,1.7) {$1$};
				\draw (1,2)--(1,3)--(0,3)--(0,2);
				\draw (1,3)--(1,4)--(0,4)--(0,3);
				\draw (0,3)--(1,4);
				\draw (0.5,2)--(1,3);
				\node at (0.75,3.3) {$1$};
				\node at (0.25,3.7) {$1$};
				\node at (0.8,2.25) {$0$};	
				\node at (0.2,0.75) {$0$};
				\node at (0.35,2.6) {$2$};
				\node at (0.5,-2.3) {$c_{\bar{3}}$};
				
			\end{scope}
			\begin{scope}[shift={(10,0)}]
				\draw (0,-1)--(0,-1.5);
				\draw (1,-1)--(1,-1.5);
				\node at (0.5,-1.3) {$\vdots$};
				\node at (0.5,-3) {$(0,1)$};
				\node at (0.5,-4) {$(0,1)$};
				\node at (0.5,-5) {$(1,0)$};
				\draw (0,0)--(0,-1);
				\draw (1,0)--(1,-1);
				\draw (0,-1)--(1,-1);
				\draw (0,-1)--(1,0);
				\node at (0.75,-0.7) {$1$};
				\node at (0.25,-0.3) {$1$};
				\draw (0,0)--(1,0)--(1,1)--(0,1)--(0,0);
				\draw (0,0)--(0.5,1);
				\node at (0.65,0.4) {$2$};	
				\draw (1,1)--(1,2)--(0,2)--(0,1);
				\draw (0,1)--(1,2);
				\node at (0.75,1.3) {$1$};
				\node at (0.25,1.7) {$1$};
				\draw (1,2)--(1,3)--(0,3)--(0,2);
				\draw (1,3)--(1,4)--(0,4)--(0,3);
				\draw (0,3)--(1,4);
				\draw (0.5,2)--(1,3);
				\node at (0.75,3.3) {$1$};
				\node at (0.25,3.7) {$1$};
				\node at (0.8,2.25) {$0$};	
				\node at (0.2,0.75) {$0$};
				\node at (0.35,2.6) {$2$};
				\draw (1,4)--(1,5)--(0,5)--(0,4);
				\draw (0,4)--(0.5,5);
				\node at (0.65,4.4) {$2$};
				\node at (0.5,-2.3) {$c_{\bar{2}}$};
				
			\end{scope}
			\begin{scope}[shift={(12,0)}]
				\draw (0,-1)--(0,-1.5);
				\draw (1,-1)--(1,-1.5);
				\node at (0.5,-1.3) {$\vdots$};
				\node at (0.5,-3) {$(0,2)$};
				\node at (0.5,-4) {$(1,0)$};
				\node at (0.5,-5) {$(0,0)$};
				\draw (0,0)--(0,-1);
				\draw (1,0)--(1,-1);
				\draw (0,-1)--(1,-1);
				\draw (0,-1)--(1,0);
				\node at (0.75,-0.7) {$1$};
				\node at (0.25,-0.3) {$1$};
				\draw (0,0)--(1,0)--(1,1)--(0,1)--(0,0);
				\draw (0,0)--(0.5,1);
				\node at (0.65,0.4) {$2$};	
				\draw (1,1)--(1,2)--(0,2)--(0,1);
				\draw (0,1)--(1,2);
				\node at (0.75,1.3) {$1$};
				\node at (0.25,1.7) {$1$};
				\draw (1,2)--(1,3)--(0,3)--(0,2);
				\draw (1,3)--(1,4)--(0,4)--(0,3);
				\draw (0,3)--(1,4);
				\draw (0.5,2)--(1,3);
				\node at (0.75,3.3) {$1$};
				\node at (0.25,3.7) {$1$};
				\node at (0.8,2.25) {$0$};	
				\node at (0.2,0.75) {$0$};
				\node at (0.35,2.6) {$2$};
				\draw (1,4)--(1,5)--(0,5)--(0,4);
				\draw (0,4)--(0.5,5);
				\node at (0.65,4.4) {$2$};
				\draw (1,5)--(1,6)--(0,6)--(0,5);
				\draw (0,5)--(1,6);
				\node at (0.75,5.3) {$1$};
				\node at (0.5,-2.3) {$c_{\bar{1}}$};
			\end{scope}
			
			\begin{scope}[shift={(14,0)}]
				\draw (0,-1)--(0,-1.5);
				\draw (1,-1)--(1,-1.5);
				\node at (0.5,-1.3) {$\vdots$};
				\node at (0.5,-3) {$(1,1)$};
				\node at (0.5,-4) {$(0,0)$};
				\node at (0.5,-5) {$(0,0)$};
				\draw (0,0)--(0,-1);
				\draw (1,0)--(1,-1);
				\draw (0,-1)--(1,-1);
				\draw (0,-1)--(1,0);
				\node at (0.75,-0.7) {$1$};
				\node at (0.25,-0.3) {$1$};
				\draw (0,0)--(1,0)--(1,1)--(0,1)--(0,0);
				\draw (0,0)--(0.5,1);
				\node at (0.65,0.4) {$2$};	
				\draw (1,1)--(1,2)--(0,2)--(0,1);
				\draw (0,1)--(1,2);
				\node at (0.75,1.3) {$1$};
				\node at (0.25,1.7) {$1$};
				\draw (1,2)--(1,3)--(0,3)--(0,2);
				\draw (1,3)--(1,4)--(0,4)--(0,3);
				\draw (0,3)--(1,4);
				\draw (0.5,2)--(1,3);
				\node at (0.75,3.3) {$1$};
				\node at (0.25,3.7) {$1$};
				\node at (0.8,2.25) {$0$};	
				\node at (0.2,0.75) {$0$};
				\node at (0.35,2.6) {$2$};
				\draw (1,4)--(1,5)--(0,5)--(0,4);
				\draw (0,4)--(0.5,5);
				\node at (0.65,4.4) {$2$};
				\draw (1,5)--(1,6)--(0,6)--(0,5);
				\draw (0,5)--(1,6);
				\node at (0.75,5.3) {$1$};
				\draw (1,6)--(1,7)--(0,7)--(0,6);
				\draw (0.5,6)--(1,7);
				\node at (0.8,6.25) {$0$};
				\node at (0.5,-2.3) {$c_{\phi}$};
			\end{scope}
		\end{tikzpicture}
	}
	\vskip -.5pc
	\caption{All possible Young columns and signature of Young columns for $U_q(D_4^{(3)})$} \label{all possible Young columns D43}
	\vskip -1pc
\end{figure}

For $U_q(G_2^{(1)})$, We list all possible Young columns and the signature of Young columns as follows.

\begin{figure}[H] 
	{\small
		\centering
	\begin{tikzpicture}[scale=0.6]
		\begin{scope}[shift={(-0.9,0)}]
			\node at (-0.7,-2) {$\text{sign}_0(y)$};
			\node at (-0.7,-3) {$\text{sign}_1(y)$};
			\node at (-0.7,-4) {$\text{sign}_2(y)$};
		\end{scope}	
		\begin{scope}[shift={(0,0)}]			
			\draw (0,0)--(0,-0.5);
			\draw (1,0)--(1,-0.5);
			\node at (0.5,-0.3) {$\vdots$};
			\node at (0.5,-2) {(2,0)};
			\node at (0.5,-3) {(0,1)};
			\node at (0.5,-4) {(0,0)};
			\draw (0,0)--(0,1)--(1,1)--(1,0)--(0,0); \draw (0,0)--(1,1);\draw (0,0)--(0.5,1);
			\draw (0,1)--(0,2)--(1,2)--(1,1);\draw (0.5,1)--(0.5,2);
			\draw (0,2)--(0,3)--(1,3)--(1,2);\draw (0,2)--(1,3);\draw (0.5,2)--(1,3);
			\node at (0.15,0.75) {$2$};\node at (0.53,0.75) {$0$};\node at (0.48,2.25) {$0$};
			\node at (0.7,0.3) {$2$};
			\node at (0.25,1.5) {$1$};
			\node at (0.5,-1.2) {$c'_1$};
		\end{scope}
		\begin{scope}[shift={(1.5,0)}]
			\draw (0,0)--(0,-0.5);
			\draw (1,0)--(1,-0.5);
			\node at (0.5,-0.3) {$\vdots$};
			\node at (0.5,-2) {(1,0)};
			\node at (0.5,-3) {(1,0)};
			\node at (0.5,-4) {(0,3)};
			\draw (0,0)--(0,1)--(1,1)--(1,0)--(0,0); \draw (0,0)--(1,1);\draw (0,0)--(0.5,1);
			\draw (0,1)--(0,2)--(1,2)--(1,1);\draw (0.5,1)--(0.5,2);
			\draw (0,2)--(0,3)--(1,3)--(1,2);\draw (0,2)--(1,3);\draw (0.5,2)--(1,3);
			\draw (0.5,1.5)--(1,1.5);\node at (0.75,1.25) {$1$};
			\node at (0.15,0.75) {$2$};\node at (0.53,0.75) {$0$};\node at (0.48,2.25) {$0$};
			\node at (0.7,0.3) {$2$};
			\node at (0.25,1.5) {$1$};
			\node at (0.5,-1.2) {$c'_2$};
		\end{scope}
		\begin{scope}[shift={(3,0)}]
			\draw (0,0)--(0,-0.5);
			\draw (1,0)--(1,-0.5);
			\node at (0.5,-0.3) {$\vdots$};
			\node at (0.5,-2) {(1,0)};
			\node at (0.5,-3) {(0,0)};
			\node at (0.5,-4) {(1,2)};
			\draw (0,0)--(0,1)--(1,1)--(1,0)--(0,0); \draw (0,0)--(1,1);\draw (0,0)--(0.5,1);
			\draw (0,1)--(0,2)--(1,2)--(1,1);\draw (0.5,1)--(0.5,2);
			\draw (0,2)--(0,3)--(1,3)--(1,2);\draw (0,2)--(1,3);\draw (0.5,2)--(1,3);
			\draw (0.5,1.5)--(1,1.5);\node at (0.75,1.25) {$1$};\node at (0.75,1.75) {$2$};
			\node at (0.15,0.75) {$2$};\node at (0.53,0.75) {$0$};\node at (0.48,2.25) {$0$};
			\node at (0.7,0.3) {$2$};
			\node at (0.25,1.5) {$1$};
			\node at (0.5,-1.2) {$c'_3$};
		\end{scope}
		\begin{scope}[shift={(4.5,0)}]
			\draw (0,0)--(0,-0.5);
			\draw (1,0)--(1,-0.5);
			\node at (0.5,-0.3) {$\vdots$};
			\node at (0.5,-2) {(1,0)};
			\node at (0.5,-3) {(0,1)};
			\node at (0.5,-4) {(2,1)};
			\draw (0,0)--(0,1)--(1,1)--(1,0)--(0,0); \draw (0,0)--(1,1);\draw (0,0)--(0.5,1);
			\draw (0,1)--(0,2)--(1,2)--(1,1);\draw (0.5,1)--(0.5,2);
			\draw (0,2)--(0,3)--(1,3)--(1,2);\draw (0,2)--(1,3);\draw (0.5,2)--(1,3);
			\draw (0.5,1.5)--(1,1.5);\node at (0.75,1.25) {$1$};\node at (0.75,1.75) {$2$};
			\node at (0.15,0.75) {$2$};\node at (0.53,0.75) {$0$};\node at (0.48,2.25) {$0$};
			\node at (0.7,0.3) {$2$};\node at (0.3,2.7) {$2$};
			\node at (0.25,1.5) {$1$};
			\node at (0.5,-1.2) {$c'_4$};
		\end{scope}
		\begin{scope}[shift={(6,0)}]
			\draw (0,0)--(0,-0.5);
			\draw (1,0)--(1,-0.5);
			\node at (0.5,-0.3) {$\vdots$};
			\node at (0.5,-2) {(0,0)};
			\node at (0.5,-3) {(1,0)};
			\node at (0.5,-4) {(0,2)};
			\draw (0,0)--(0,1)--(1,1)--(1,0)--(0,0); \draw (0,0)--(1,1);\draw (0,0)--(0.5,1);
			\draw (0,1)--(0,2)--(1,2)--(1,1);\draw (0.5,1)--(0.5,2);
			\draw (0,2)--(0,3)--(1,3)--(1,2);\draw (0,2)--(1,3);\draw (0.5,2)--(1,3);
			\draw (0.5,1.5)--(1,1.5);\node at (0.75,1.25) {$1$};\node at (0.75,1.75) {$2$};
			\node at (0.15,0.75) {$2$};\node at (0.53,0.75) {$0$};\node at (0.48,2.25) {$0$};
			\node at (0.7,0.3) {$2$};\node at (0.3,2.7) {$2$};
			\node at (0.25,1.5) {$1$};
			\draw (0,3)--(0,4)--(1,4)--(1,3);\draw (0.5,3)--(0.5,4);\draw (0,3.5)--(0.5,3.5);
			\node at (0.25,3.25) {$1$};
			\node at (0.5,-1.2) {$c'_5$};	
		\end{scope}
		\begin{scope}[shift={(7.5,0)}]
			\draw (0,0)--(0,-0.5);
			\draw (1,0)--(1,-0.5);
			\node at (0.5,-0.3) {$\vdots$};
			\node at (0.5,-2) {(1,0)};
			\node at (0.5,-3) {(0,2)};
			\node at (0.5,-4) {(3,0)};
			\draw (0,0)--(0,1)--(1,1)--(1,0)--(0,0); \draw (0,0)--(1,1);\draw (0,0)--(0.5,1);
			\draw (0,1)--(0,2)--(1,2)--(1,1);\draw (0.5,1)--(0.5,2);
			\draw (0,2)--(0,3)--(1,3)--(1,2);\draw (0,2)--(1,3);\draw (0.5,2)--(1,3);
			\draw (0.5,1.5)--(1,1.5);\node at (0.75,1.25) {$1$};\node at (0.75,1.75) {$2$};
			\node at (0.15,0.75) {$2$};\node at (0.53,0.75) {$0$};\node at (0.48,2.25) {$0$};
			\node at (0.7,0.3) {$2$};\node at (0.3,2.7) {$2$};
			\node at (0.25,1.5) {$1$};
			\node at (0.85,2.25) {$2$};
			\node at (0.5,-1.2) {$c'_6$};	
			
		\end{scope}
		\begin{scope}[shift={(9,0)}]
			\draw (0,0)--(0,-0.5);
			\draw (1,0)--(1,-0.5);
			\node at (0.5,-0.3) {$\vdots$};
			\node at (0.5,-2) {(0,0)};
			\node at (0.5,-3) {(1,1)};
			\node at (0.5,-4) {(0,0)};
			\draw (0,0)--(0,1)--(1,1)--(1,0)--(0,0); \draw (0,0)--(1,1);\draw (0,0)--(0.5,1);
			\draw (0,1)--(0,2)--(1,2)--(1,1);\draw (0.5,1)--(0.5,2);
			\draw (0,2)--(0,3)--(1,3)--(1,2);\draw (0,2)--(1,3);\draw (0.5,2)--(1,3);
			\draw (0.5,1.5)--(1,1.5);\node at (0.75,1.25) {$1$};\node at (0.75,1.75) {$2$};
			\node at (0.15,0.75) {$2$};\node at (0.53,0.75) {$0$};\node at (0.48,2.25) {$0$};
			\node at (0.7,0.3) {$2$};\node at (0.3,2.7) {$2$};
			\node at (0.25,1.5) {$1$};
			\draw (0,3)--(0,4)--(1,4)--(1,3);\draw (0.5,3)--(0.5,4);\draw (0,3.5)--(0.5,3.5);
			\node at (0.25,3.25) {$1$};
			\node at (0.85,2.25) {$2$};
			\node at (0.5,-1.2) {$c'_7$};	
		\end{scope}
		
		\begin{scope}[shift={(10.5,0)}]
			\draw (0,0)--(0,-0.5);
			\draw (1,0)--(1,-0.5);
			\node at (0.5,-0.3) {$\vdots$};
			\node at (0.5,-2) {(0,0)};
			\node at (0.5,-3) {(0,0)};
			\node at (0.5,-4) {(1,1)};
			\draw (0,0)--(0,1)--(1,1)--(1,0)--(0,0); \draw (0,0)--(1,1);\draw (0,0)--(0.5,1);
			\draw (0,1)--(0,2)--(1,2)--(1,1);\draw (0.5,1)--(0.5,2);
			\draw (0,2)--(0,3)--(1,3)--(1,2);\draw (0,2)--(1,3);\draw (0.5,2)--(1,3);
			\draw (0.5,1.5)--(1,1.5);\node at (0.75,1.25) {$1$};\node at (0.75,1.75) {$2$};
			\node at (0.15,0.75) {$2$};\node at (0.53,0.75) {$0$};\node at (0.48,2.25) {$0$};
			\node at (0.7,0.3) {$2$};\node at (0.3,2.7) {$2$};
			\node at (0.25,1.5) {$1$};
			\draw (0,3)--(0,4)--(1,4)--(1,3);\draw (0.5,3)--(0.5,4);\draw (0,3.5)--(0.5,3.5);
			\node at (0.25,3.25) {$1$};\node at (0.25,3.75) {$2$};
			\node at (0.5,-1.2) {$c'_{\bar{7}}$};	
		\end{scope}
		
		\begin{scope}[shift={(12,0)}]
			\draw (0,0)--(0,-0.5);
			\draw (1,0)--(1,-0.5);
			\node at (0.5,-0.3) {$\vdots$};
			\node at (0.5,-2) {(0,1)};
			\node at (0.5,-3) {(2,0)};
			\node at (0.5,-4) {(0,3)};
			\draw (0,0)--(0,1)--(1,1)--(1,0)--(0,0); \draw (0,0)--(1,1);\draw (0,0)--(0.5,1);
			\draw (0,1)--(0,2)--(1,2)--(1,1);\draw (0.5,1)--(0.5,2);
			\draw (0,2)--(0,3)--(1,3)--(1,2);\draw (0,2)--(1,3);\draw (0.5,2)--(1,3);
			\draw (0.5,1.5)--(1,1.5);\node at (0.75,1.25) {$1$};\node at (0.75,1.75) {$2$};
			\node at (0.15,0.75) {$2$};\node at (0.53,0.75) {$0$};\node at (0.48,2.25) {$0$};
			\node at (0.7,0.3) {$2$};\node at (0.3,2.7) {$2$};
			\node at (0.25,1.5) {$1$};
			\draw (0,3)--(0,4)--(1,4)--(1,3);\draw (0.5,3)--(0.5,4);\draw (0,3.5)--(0.5,3.5);
			\node at (0.25,3.25) {$1$};\node at (0.75,3.5) {$1$}; \node at (0.85,2.25) {$2$};
			\node at (0.5,-1.2) {$c'_{\bar{6}}$};	
		\end{scope}	
		
		\begin{scope}[shift={(13.5,0)}]
			\draw (0,0)--(0,-0.5);
			\draw (1,0)--(1,-0.5);
			\node at (0.5,-0.3) {$\vdots$};
			\node at (0.5,-2) {(0,0)};
			\node at (0.5,-3) {(0,1)};
			\node at (0.5,-4) {(2,0)};
			\draw (0,0)--(0,1)--(1,1)--(1,0)--(0,0); \draw (0,0)--(1,1);\draw (0,0)--(0.5,1);
			\draw (0,1)--(0,2)--(1,2)--(1,1);\draw (0.5,1)--(0.5,2);
			\draw (0,2)--(0,3)--(1,3)--(1,2);\draw (0,2)--(1,3);\draw (0.5,2)--(1,3);
			\draw (0.5,1.5)--(1,1.5);\node at (0.75,1.25) {$1$};\node at (0.75,1.75) {$2$};
			\node at (0.15,0.75) {$2$};\node at (0.53,0.75) {$0$};\node at (0.48,2.25) {$0$};
			\node at (0.7,0.3) {$2$};\node at (0.3,2.7) {$2$};
			\node at (0.25,1.5) {$1$};
			\draw (0,3)--(0,4)--(1,4)--(1,3);\draw (0.5,3)--(0.5,4);\draw (0,3.5)--(0.5,3.5);
			\node at (0.25,3.25) {$1$}; \node at (0.25,3.75) {$2$};\node at (0.85,2.25) {$2$};
			\node at (0.5,-1.2) {$c'_{\bar{5}}$};	
		\end{scope}	
		
		\begin{scope}[shift={(15,0)}]
			\draw (0,0)--(0,-0.5);
			\draw (1,0)--(1,-0.5);
			\node at (0.5,-0.3) {$\vdots$};
			\node at (0.5,-2) {(0,1)};
			\node at (0.5,-3) {(1,0)};
			\node at (0.5,-4) {(1,2)};
			\draw (0,0)--(0,1)--(1,1)--(1,0)--(0,0); \draw (0,0)--(1,1);\draw (0,0)--(0.5,1);
			\draw (0,1)--(0,2)--(1,2)--(1,1);\draw (0.5,1)--(0.5,2);
			\draw (0,2)--(0,3)--(1,3)--(1,2);\draw (0,2)--(1,3);\draw (0.5,2)--(1,3);
			\draw (0.5,1.5)--(1,1.5);\node at (0.75,1.25) {$1$};\node at (0.75,1.75) {$2$};
			\node at (0.15,0.75) {$2$};\node at (0.53,0.75) {$0$};\node at (0.48,2.25) {$0$};
			\node at (0.7,0.3) {$2$};\node at (0.3,2.7) {$2$};
			\node at (0.25,1.5) {$1$};
			\draw (0,3)--(0,4)--(1,4)--(1,3);\draw (0.5,3)--(0.5,4);\draw (0,3.5)--(0.5,3.5);
			\node at (0.25,3.25) {$1$}; \node at (0.25,3.75) {$2$};\node at (0.85,2.25) {$2$};
			\node at (0.5,-1.2) {$c'_{\bar{4}}$};\node at (0.75,3.5) {$1$};
		\end{scope}	
		
		\begin{scope}[shift={(16.5,0)}]
			\draw (0,0)--(0,-0.5);
			\draw (1,0)--(1,-0.5);
			\node at (0.5,-0.3) {$\vdots$};
			\node at (0.5,-2) {(0,1)};
			\node at (0.5,-3) {(0,0)};
			\node at (0.5,-4) {(2,1)};
			\draw (0,0)--(0,1)--(1,1)--(1,0)--(0,0); \draw (0,0)--(1,1);\draw (0,0)--(0.5,1);
			\draw (0,1)--(0,2)--(1,2)--(1,1);\draw (0.5,1)--(0.5,2);
			\draw (0,2)--(0,3)--(1,3)--(1,2);\draw (0,2)--(1,3);\draw (0.5,2)--(1,3);
			\draw (0.5,1.5)--(1,1.5);\node at (0.75,1.25) {$1$};\node at (0.75,1.75) {$2$};
			\node at (0.15,0.75) {$2$};\node at (0.53,0.75) {$0$};\node at (0.48,2.25) {$0$};
			\node at (0.7,0.3) {$2$};\node at (0.3,2.7) {$2$};
			\node at (0.25,1.5) {$1$};
			\draw (0,3)--(0,4)--(1,4)--(1,3);\draw (0.5,3)--(0.5,4);\draw (0,3.5)--(0.5,3.5);
			\node at (0.25,3.25) {$1$}; \node at (0.25,3.75) {$2$};\node at (0.85,2.25) {$2$};
			\node at (0.5,-1.2) {$c'_{\bar{3}}$};\node at (0.75,3.5) {$1$};
			\draw (0,4)--(0,5)--(1,5)--(1,4);\draw (0,4)--(1,5);\node at (0.7,4.3) {$2$};
		\end{scope}	
		
		\begin{scope}[shift={(18,0)}]
			\draw (0,0)--(0,-0.5);
			\draw (1,0)--(1,-0.5);
			\node at (0.5,-0.3) {$\vdots$};
			\node at (0.5,-2) {(0,1)};
			\node at (0.5,-3) {(0,1)};
			\node at (0.5,-4) {(3,0)};
			\draw (0,0)--(0,1)--(1,1)--(1,0)--(0,0); \draw (0,0)--(1,1);\draw (0,0)--(0.5,1);
			\draw (0,1)--(0,2)--(1,2)--(1,1);\draw (0.5,1)--(0.5,2);
			\draw (0,2)--(0,3)--(1,3)--(1,2);\draw (0,2)--(1,3);\draw (0.5,2)--(1,3);
			\draw (0.5,1.5)--(1,1.5);\node at (0.75,1.25) {$1$};\node at (0.75,1.75) {$2$};
			\node at (0.15,0.75) {$2$};\node at (0.53,0.75) {$0$};\node at (0.48,2.25) {$0$};
			\node at (0.7,0.3) {$2$};\node at (0.3,2.7) {$2$};
			\node at (0.25,1.5) {$1$};
			\draw (0,3)--(0,4)--(1,4)--(1,3);\draw (0.5,3)--(0.5,4);\draw (0,3.5)--(0.5,3.5);
			\node at (0.25,3.25) {$1$}; \node at (0.25,3.75) {$2$};\node at (0.85,2.25) {$2$};
			\node at (0.5,-1.2) {$c'_{\bar{2}}$};\node at (0.75,3.5) {$1$};
			\draw (0,4)--(0,5)--(1,5)--(1,4);\draw (0,4)--(1,5);\draw (0,4)--(0.5,5);\node at (0.7,4.3) {$2$};	\node at (0.15,4.75) {$2$};
		\end{scope}	
		
		\begin{scope}[shift={(19.5,0)}]
			\draw (0,0)--(0,-0.5);
			\draw (1,0)--(1,-0.5);
			\node at (0.5,-0.3) {$\vdots$};
			\node at (0.5,-2) {(0,2)};
			\node at (0.5,-3) {(1,0)};
			\node at (0.5,-4) {(0,0)};
			\draw (0,0)--(0,1)--(1,1)--(1,0)--(0,0); \draw (0,0)--(1,1);\draw (0,0)--(0.5,1);
			\draw (0,1)--(0,2)--(1,2)--(1,1);\draw (0.5,1)--(0.5,2);
			\draw (0,2)--(0,3)--(1,3)--(1,2);\draw (0,2)--(1,3);\draw (0.5,2)--(1,3);
			\draw (0.5,1.5)--(1,1.5);\node at (0.75,1.25) {$1$};\node at (0.75,1.75) {$2$};
			\node at (0.15,0.75) {$2$};\node at (0.53,0.75) {$0$};\node at (0.48,2.25) {$0$};
			\node at (0.7,0.3) {$2$};\node at (0.3,2.7) {$2$};
			\node at (0.25,1.5) {$1$};
			\draw (0,3)--(0,4)--(1,4)--(1,3);\draw (0.5,3)--(0.5,4);\draw (0,3.5)--(0.5,3.5);
			\node at (0.25,3.25) {$1$}; \node at (0.25,3.75) {$2$};\node at (0.85,2.25) {$2$};
			\node at (0.5,-1.2) {$c'_{\bar{1}}$};\node at (0.75,3.5) {$1$};
			\draw (0,4)--(0,5)--(1,5)--(1,4);\draw (0,4)--(1,5);\draw (0,4)--(0.5,5);\node at (0.7,4.3) {$2$};	\node at (0.15,4.75) {$2$};
			\draw (0,5)--(0,6)--(1,6)--(1,5);	\draw (0.5,5)--(0.5,6);	\draw (0.5,5.5)--(1,5.5); \node at (0.25,5.5) {$1$};
		\end{scope}	
		
		\begin{scope}[shift={(21,0)}]
			\draw (0,0)--(0,-0.5);
			\draw (1,0)--(1,-0.5);
			\node at (0.5,-0.3) {$\vdots$};
			\node at (0.5,-2) {(1,1)};
			\node at (0.5,-3) {(0,0)};
			\node at (0.5,-4) {(0,0)};
			\draw (0,0)--(0,1)--(1,1)--(1,0)--(0,0); \draw (0,0)--(1,1);\draw (0,0)--(0.5,1);
			\draw (0,1)--(0,2)--(1,2)--(1,1);\draw (0.5,1)--(0.5,2);
			\draw (0,2)--(0,3)--(1,3)--(1,2);\draw (0,2)--(1,3);\draw (0.5,2)--(1,3);
			\draw (0.5,1.5)--(1,1.5);\node at (0.75,1.25) {$1$};\node at (0.75,1.75) {$2$};
			\node at (0.15,0.75) {$2$};\node at (0.53,0.75) {$0$};\node at (0.48,2.25) {$0$};
			\node at (0.7,0.3) {$2$};\node at (0.3,2.7) {$2$};
			\node at (0.25,1.5) {$1$};
			\draw (0,3)--(0,4)--(1,4)--(1,3);\draw (0.5,3)--(0.5,4);\draw (0,3.5)--(0.5,3.5);
			\node at (0.25,3.25) {$1$}; \node at (0.25,3.75) {$2$};\node at (0.85,2.25) {$2$};
			\node at (0.5,-1.2) {$c'_0$};\node at (0.75,3.5) {$1$};
			\draw (0,4)--(0,5)--(1,5)--(1,4);\draw (0,4)--(1,5);\draw (0,4)--(0.5,5);\node at (0.7,4.3) {$2$};	\node at (0.15,4.75) {$2$};
			\draw (0,5)--(0,6)--(1,6)--(1,5);	\draw (0.5,5)--(0.5,6);	\draw (0.5,5.5)--(1,5.5); \node at (0.25,5.5) {$1$};
			\draw (0,6)--(0,7)--(1,7)--(1,6);\draw (0,6)--(1,7);\draw (0.5,6)--(1,7);
			\node at (0.48,6.25) {$0$};
		\end{scope}	
	\end{tikzpicture}
	}
\vskip -0.5pc
\caption{All possible Young columns and signature of Young columns for $U_q(G_2^{(1)})$}\label{all possible Young columns G21}
\end{figure}
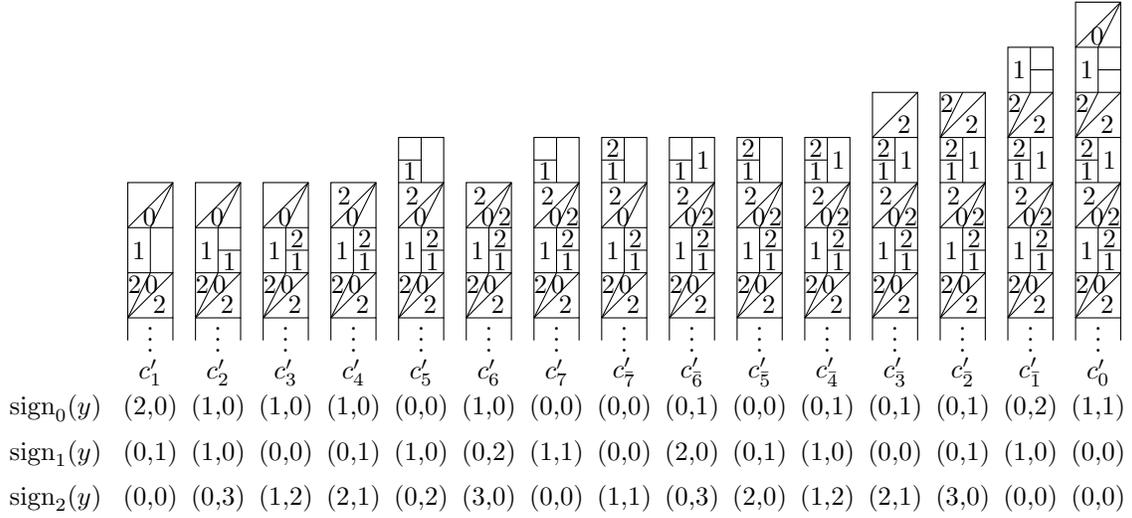

\begin{Rmk}
We need to make some special assumptions for Young column $c'_5$ and Young column $c'_7$.
\begin{enumerate}
	\item[(1)] For Young column $c'_5$, the upper 2-slot is the admissible slot and the lower 2-slot is the twice admissible slot.
	\item[(2)] For Young column $c'_7$, we cannot remove 2-block and cannot add 2-block. 
\end{enumerate} 
\end{Rmk}

\begin{Prop}\label{prop:realization of perfect crystal}
	Using the equivalence classes of Young columns, the perfect crystals $B_1$ and $B'_1$ given in Example \ref{ex:level 1 perfect crystal D43} and Example \ref{ex:level 1 perfect crystal G21} can be realized in terms of Young columns as follows (see Figure~\ref{FIG:realization of B1} and Figure~\ref{FIG:realization of B1'}).
\end{Prop}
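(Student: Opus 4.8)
The plan is to equip the set $C_1$ (resp. $C'_1$) of equivalence classes of Young columns with a classical crystal structure whose data are read off directly from the $i$-signatures tabulated in Figure~\ref{all possible Young columns D43} (resp. Figure~\ref{all possible Young columns G21}), and then to verify that, under the bijection $c_\bullet\leftrightarrow u_\bullet$ (resp. $c'_\bullet\leftrightarrow v_\bullet$) suggested by the labeling, the resulting crystal graph coincides with the perfect-crystal graph drawn in Example~\ref{ex:level 1 perfect crystal D43} (resp. Example~\ref{ex:level 1 perfect crystal G21}).

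First I would define the structure maps. For a class $y$ with $i$-signature $\text{sign}_i(y)=(\underbrace{-,\cdots,-}_r,\underbrace{+,\cdots,+}_a)$, I set $\varepsilon_i(y)=r$, $\varphi_i(y)=a$, and $\wt(y)=\sum_{i\in I}(\varphi_i(y)-\varepsilon_i(y))\Lambda_i$; since every $-$ precedes every $+$, the signature is already reduced and the usual cancellation is vacuous. I define $\tilde f_i y$ to be the class obtained by adding an $i$-block at the admissible $i$-slot selected by $\text{sign}_i$ (with $\tilde f_i y=0$ when $a=0$), and $\tilde e_i y$ by removing the removable $i$-block selected by $\text{sign}_i$ (with $\tilde e_i y=0$ when $r=0$). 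The first thing to establish is well-definedness: that adding or removing the selected block again yields a continuous part of the column pattern obeying conditions (1)--(3) of the definition of a Young column, and that the choice is invariant under the $180^\circ$ rotation defining equivalence, so that $\tilde e_i,\tilde f_i$ descend to maps $C_1\to C_1\cup\{0\}$ (resp. $C'_1\to C'_1\cup\{0\}$).

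The core of the proof is then a finite verification against the given graphs. Using the correspondence $c_\bullet\leftrightarrow u_\bullet$ (resp. $c'_\bullet\leftrightarrow v_\bullet$), I would run through every column in Figure~\ref{all possible Young columns D43} (resp. Figure~\ref{all possible Young columns G21}), compute $\tilde f_0,\tilde f_1,\tilde f_2$ from the signature rule, and confirm that each output is exactly the endpoint of the correspondingly colored arrow. For instance, in type $D_4^{(3)}$ one checks $\tilde f_1 c_1=c_2$, $\tilde f_2 c_2=c_3$, $\tilde f_1 c_3=c_0$ and $\tilde f_0 c_{\bar 2}=c_3$, reproducing the black, blue and red edges. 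Since the tabulated signatures already supply $\varepsilon_i$ and $\varphi_i$, and $\wt$ is determined by these, this single pass shows simultaneously that the labeling bijection preserves $\wt,\varepsilon_i,\varphi_i$ and intertwines every $\tilde f_i$ (hence every $\tilde e_i$); by the definition of crystal morphism it is therefore a crystal isomorphism, and the crystal axioms for $C_1$ (resp. $C'_1$) are inherited from those of the perfect crystal $B_1$ (resp. $B'_1$).

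The delicate point, and the one I expect to require the most care, is the treatment of the columns built from the $\mathbf{LL}$-block in type $G_2^{(1)}$, namely $c'_5$ and $c'_7$. Here $\alpha_2$ occurs with multiplicity three in $\delta$ while the $\mathbf{LL}$-block carries a single color-$2$ label over a $3/4$ cube, so the plain count of removable $2$-blocks and admissible $2$-slots is genuinely ambiguous; this is exactly the content of the Remark immediately preceding this proposition. I would resolve the ambiguity not by a geometric argument but by matching the $\tilde e_2,\tilde f_2$ edges incident to $v_5$ and $v_7$ in the graph of $B'_1$: these force the conventions that for $c'_5$ the upper $2$-slot is admissible and the lower one is second admissible, and that for $c'_7$ no $2$-block may be added or removed. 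With those conventions installed, the tabulated signatures of $c'_5$ and $c'_7$ are correct, and the remaining verifications reduce to routine bookkeeping on block counts.
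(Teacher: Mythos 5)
Your proposal is correct and takes essentially the same route as the paper: the paper's proof of this proposition is precisely the finite case-check you describe---reading $\varepsilon_i,\varphi_i$ and the Kashiwara operators off the tabulated $i$-signatures in Figure~\ref{all possible Young columns D43} and Figure~\ref{all possible Young columns G21} (with the preceding Remark's conventions for $c'_5$ and $c'_7$), letting the operators descend to equivalence classes of columns, and matching the resulting graphs against those of $B_1$ and $B'_1$, the cited figures being that verification and Proposition~\ref{perfect isomorphism} recording the resulting isomorphisms $\psi,\psi'$. Your explicit attention to well-definedness modulo the $180^\circ$ rotation and to recovering $\wt$ from $\varphi_i-\varepsilon_i$ only makes precise what the paper leaves implicit.
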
	
	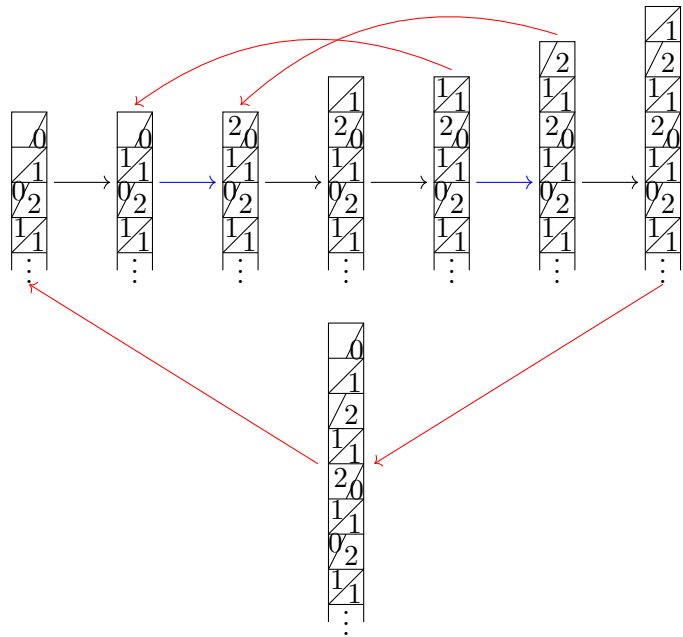
\begin{figure}[H]

			\centering 

			\begin{tikzpicture}[scale=0.468]
				\draw (0,-1)--(0,-1.5);
				\draw (1,-1)--(1,-1.5);
				\node at (0.5,-1.3) {$\vdots$};
				\draw (0,0)--(0,-1);
				\draw (1,0)--(1,-1);
				\draw (0,-1)--(1,-1);
				\draw (0,-1)--(1,0);
				\node at (0.75,-0.7) {$1$};
				\node at (0.25,-0.3) {$1$};
				\draw (0,0)--(1,0)--(1,1)--(0,1)--(0,0);
				\draw (0,0)--(0.5,1);
				\node at (0.65,0.4) {$2$};	
				\draw (1,1)--(1,2)--(0,2)--(0,1);
				\draw (0,1)--(1,2);
				\node at (0.75,1.3) {$1$};
				\draw (1,2)--(1,3)--(0,3)--(0,2);
				\draw (0.5,2)--(1,3);
				\node at (0.8,2.25) {$0$};
				\node at (0.2,0.75) {$0$};
				\draw [black,->] (1.2,1)--(2.8,1);
				\begin{scope}[shift={(3,0)}]
					\draw (0,-1)--(0,-1.5);
					\draw (1,-1)--(1,-1.5);
					\node at (0.5,-1.3) {$\vdots$};
					\draw (0,0)--(0,-1);
					\draw (1,0)--(1,-1);
					\draw (0,-1)--(1,-1);
					\draw (0,-1)--(1,0);
					\node at (0.75,-0.7) {$1$};
					\node at (0.25,-0.3) {$1$};
					\draw (0,0)--(1,0)--(1,1)--(0,1)--(0,0);
					\draw (0,0)--(0.5,1);
					\node at (0.65,0.4) {$2$};	
					\draw (1,1)--(1,2)--(0,2)--(0,1);
					\draw (0,1)--(1,2);
					\node at (0.75,1.3) {$1$};
					\node at (0.25,1.7) {$1$};
					\draw (1,2)--(1,3)--(0,3)--(0,2);
					\draw (0.5,2)--(1,3);
					\node at (0.8,2.25) {$0$};	
					\node at (0.2,0.75) {$0$};
					\draw [blue,->] (1.2,1)--(2.8,1);
				\end{scope}
				\begin{scope}[shift={(6,0)}]
					\draw (0,-1)--(0,-1.5);
					\draw (1,-1)--(1,-1.5);
					\node at (0.5,-1.3) {$\vdots$};
					\draw (0,0)--(0,-1);
					\draw (1,0)--(1,-1);
					\draw (0,-1)--(1,-1);
					\draw (0,-1)--(1,0);
					\node at (0.75,-0.7) {$1$};
					\node at (0.25,-0.3) {$1$};
					\draw (0,0)--(1,0)--(1,1)--(0,1)--(0,0);
					\draw (0,0)--(0.5,1);
					\node at (0.65,0.4) {$2$};	
					\draw (1,1)--(1,2)--(0,2)--(0,1);
					\draw (0,1)--(1,2);
					\node at (0.75,1.3) {$1$};
					\node at (0.25,1.7) {$1$};
					\draw (1,2)--(1,3)--(0,3)--(0,2);
					\draw (0.5,2)--(1,3);
					\node at (0.8,2.25) {$0$};	
					\node at (0.2,0.75) {$0$};
					\node at (0.35,2.6) {$2$};
					\draw [black,->] (1.2,1)--(2.8,1);
				\end{scope}
				\begin{scope}[shift={(9,0)}]
					\draw (0,-1)--(0,-1.5);
					\draw (1,-1)--(1,-1.5);
					\node at (0.5,-1.3) {$\vdots$};
					\draw (0,0)--(0,-1);
					\draw (1,0)--(1,-1);
					\draw (0,-1)--(1,-1);
					\draw (0,-1)--(1,0);
					\node at (0.75,-0.7) {$1$};
					\node at (0.25,-0.3) {$1$};
					\draw (0,0)--(1,0)--(1,1)--(0,1)--(0,0);
					\draw (0,0)--(0.5,1);
					\node at (0.65,0.4) {$2$};	
					\draw (1,1)--(1,2)--(0,2)--(0,1);
					\draw (0,1)--(1,2);
					\node at (0.75,1.3) {$1$};
					\node at (0.25,1.7) {$1$};
					\draw (1,2)--(1,3)--(0,3)--(0,2);
					\draw (1,3)--(1,4)--(0,4)--(0,3);
					\draw (0,3)--(1,4);
					\draw (0.5,2)--(1,3);
					\node at (0.75,3.3) {$1$};
					
					\node at (0.8,2.25) {$0$};	
					\node at (0.2,0.75) {$0$};
					\node at (0.35,2.6) {$2$};
					\draw [black,->] (1.2,1)--(2.8,1);	
				\end{scope}
				\begin{scope}[shift={(12,0)}]
					\draw (0,-1)--(0,-1.5);
					\draw (1,-1)--(1,-1.5);
					\node at (0.5,-1.3) {$\vdots$};
					\draw (0,0)--(0,-1);
					\draw (1,0)--(1,-1);
					\draw (0,-1)--(1,-1);
					\draw (0,-1)--(1,0);
					\node at (0.75,-0.7) {$1$};
					\node at (0.25,-0.3) {$1$};
					\draw (0,0)--(1,0)--(1,1)--(0,1)--(0,0);
					\draw (0,0)--(0.5,1);
					\node at (0.65,0.4) {$2$};	
					\draw (1,1)--(1,2)--(0,2)--(0,1);
					\draw (0,1)--(1,2);
					\node at (0.75,1.3) {$1$};
					\node at (0.25,1.7) {$1$};
					\draw (1,2)--(1,3)--(0,3)--(0,2);
					\draw (1,3)--(1,4)--(0,4)--(0,3);
					\draw (0,3)--(1,4);
					\draw (0.5,2)--(1,3);
					\node at (0.75,3.3) {$1$};
					\node at (0.25,3.7) {$1$};
					\node at (0.8,2.25) {$0$};	
					\node at (0.2,0.75) {$0$};
					\node at (0.35,2.6) {$2$};
					\draw [blue,->] (1.2,1)--(2.8,1);
					\draw[red,->,bend right] (0.5,4.2) to node [swap] {} (-8.5,3.2);	
				\end{scope}
				\begin{scope}[shift={(15,0)}]
					\draw (0,-1)--(0,-1.5);
					\draw (1,-1)--(1,-1.5);
					\node at (0.5,-1.3) {$\vdots$};
					\draw (0,0)--(0,-1);
					\draw (1,0)--(1,-1);
					\draw (0,-1)--(1,-1);
					\draw (0,-1)--(1,0);
					\node at (0.75,-0.7) {$1$};
					\node at (0.25,-0.3) {$1$};
					\draw (0,0)--(1,0)--(1,1)--(0,1)--(0,0);
					\draw (0,0)--(0.5,1);
					\node at (0.65,0.4) {$2$};	
					\draw (1,1)--(1,2)--(0,2)--(0,1);
					\draw (0,1)--(1,2);
					\node at (0.75,1.3) {$1$};
					\node at (0.25,1.7) {$1$};
					\draw (1,2)--(1,3)--(0,3)--(0,2);
					\draw (1,3)--(1,4)--(0,4)--(0,3);
					\draw (0,3)--(1,4);
					\draw (0.5,2)--(1,3);
					\node at (0.75,3.3) {$1$};
					\node at (0.25,3.7) {$1$};
					\node at (0.8,2.25) {$0$};	
					\node at (0.2,0.75) {$0$};
					\node at (0.35,2.6) {$2$};
					\draw (1,4)--(1,5)--(0,5)--(0,4);
					\draw (0,4)--(0.5,5);
					\node at (0.65,4.4) {$2$};
					\draw [black,->] (1.2,1)--(2.8,1);
					\draw[red,->,bend right] (0.5,5.2) to node [swap] {} (-8.5,3.2);
				\end{scope}
				\begin{scope}[shift={(18,0)}]
					\draw (0,-1)--(0,-1.5);
					\draw (1,-1)--(1,-1.5);
					\node at (0.5,-1.3) {$\vdots$};
					\draw (0,0)--(0,-1);
					\draw (1,0)--(1,-1);
					\draw (0,-1)--(1,-1);
					\draw (0,-1)--(1,0);
					\node at (0.75,-0.7) {$1$};
					\node at (0.25,-0.3) {$1$};
					\draw (0,0)--(1,0)--(1,1)--(0,1)--(0,0);
					\draw (0,0)--(0.5,1);
					\node at (0.65,0.4) {$2$};	
					\draw (1,1)--(1,2)--(0,2)--(0,1);
					\draw (0,1)--(1,2);
					\node at (0.75,1.3) {$1$};
					\node at (0.25,1.7) {$1$};
					\draw (1,2)--(1,3)--(0,3)--(0,2);
					\draw (1,3)--(1,4)--(0,4)--(0,3);
					\draw (0,3)--(1,4);
					\draw (0.5,2)--(1,3);
					\node at (0.75,3.3) {$1$};
					\node at (0.25,3.7) {$1$};
					\node at (0.8,2.25) {$0$};	
					\node at (0.2,0.75) {$0$};
					\node at (0.35,2.6) {$2$};
					\draw (1,4)--(1,5)--(0,5)--(0,4);
					\draw (0,4)--(0.5,5);
					\node at (0.65,4.4) {$2$};
					\draw (1,5)--(1,6)--(0,6)--(0,5);
					\draw (0,5)--(1,6);
					\node at (0.75,5.3) {$1$};
					\draw [red,->] (0.5,-1.9)--(-7.7,-7);						
				\end{scope}
				
				\begin{scope}[shift={(9,-10)}]
					\draw (0,-1)--(0,-1.5);
					\draw (1,-1)--(1,-1.5);
					\node at (0.5,-1.3) {$\vdots$};
					\draw (0,0)--(0,-1);
					\draw (1,0)--(1,-1);
					\draw (0,-1)--(1,-1);
					\draw (0,-1)--(1,0);
					\node at (0.75,-0.7) {$1$};
					\node at (0.25,-0.3) {$1$};
					\draw (0,0)--(1,0)--(1,1)--(0,1)--(0,0);
					\draw (0,0)--(0.5,1);
					\node at (0.65,0.4) {$2$};	
					\draw (1,1)--(1,2)--(0,2)--(0,1);
					\draw (0,1)--(1,2);
					\node at (0.75,1.3) {$1$};
					\node at (0.25,1.7) {$1$};
					\draw (1,2)--(1,3)--(0,3)--(0,2);
					\draw (1,3)--(1,4)--(0,4)--(0,3);
					\draw (0,3)--(1,4);
					\draw (0.5,2)--(1,3);
					\node at (0.75,3.3) {$1$};
					\node at (0.25,3.7) {$1$};
					\node at (0.8,2.25) {$0$};	
					\node at (0.2,0.75) {$0$};
					\node at (0.35,2.6) {$2$};
					\draw (1,4)--(1,5)--(0,5)--(0,4);
					\draw (0,4)--(0.5,5);
					\node at (0.65,4.4) {$2$};
					\draw (1,5)--(1,6)--(0,6)--(0,5);
					\draw (0,5)--(1,6);
					\node at (0.75,5.3) {$1$};
					\draw (1,6)--(1,7)--(0,7)--(0,6);
					\draw (0.5,6)--(1,7);
					\node at (0.8,6.25) {$0$};
					\draw [red,->] (-0.3,3)--(-8.5,8.1);
				\end{scope}
			\end{tikzpicture}
	\vskip -1pc	
		\caption{Young column realization of the perfect crystal $B_{1}$}\label{FIG:realization of B1} 
	\end{figure}

\newpage

	\begin{figure}[H]
	\centering
	\begin{tikzpicture}[scale=0.57]
		\draw[black,->] (-0.5,2)--(-2.5,0); 
		\draw[blue,->] (-2.5,-1)--(-0.5,-3); 
		\draw[blue,->] (1.5,-4)--(3.5,-6);   
		\draw[black,->] (3.5,-7)--(1.5,-9); 
		\draw[blue,->] (5.5,-7)--(7.5,-9);
		\draw[black,->] (7.5,-10)--(6.5,-11);
		\draw[black,->] (4.8,-12.7)--(1.5,-16);
		\draw[blue,->] (1.5,-10)--(2.5,-11);
		\draw[blue,->] (4.2,-12.7)--(7.5,-16);
		\draw[blue,->] (1.5,-17)--(3.5,-19);
		\draw[blue,->] (5.5,-20)--(7.5,-22);
		\draw[blue,->] (9.5,-23)--(11.5,-25);
		\draw[black,->] (7.5,-17)--(5.5,-19); 
		\draw[black,->] (11.5,-27)--(9.5,-29); 
		\draw[red,->] (9.5,-28)--(15.5,-8); 
		\draw[red,->] (15.5,-7)--(1.5,2); 
		\draw[red,->] (0.5,-13.5)--(-3.5,-3.5); 
		\draw[red,->] (4.5,-17.5)--(0.5,-6); 
		\draw[red,->] (8.5,-20.5)--(4.5,-9); 
		\draw[red,->] (12.5,-23.5)--(8.5,-12); 
		\begin{scope}[shift={(0,1)}]			
			\draw (0,0)--(0,-0.5);
			\draw (1,0)--(1,-0.5);
			\node at (0.5,-0.3) {$\vdots$};
			\draw (0,0)--(0,1)--(1,1)--(1,0)--(0,0); \draw (0,0)--(1,1);\draw (0,0)--(0.5,1);
			\draw (0,1)--(0,2)--(1,2)--(1,1);\draw (0.5,1)--(0.5,2);
			\draw (0,2)--(0,3)--(1,3)--(1,2);\draw (0,2)--(1,3);\draw (0.5,2)--(1,3);
			\node at (0.15,0.75) {$2$};\node at (0.53,0.75) {$0$};\node at (0.48,2.25) {$0$};
			\node at (0.7,0.3) {$2$};
			\node at (0.25,1.5) {$1$};
		\end{scope}
		\begin{scope}[shift={(-4,-2.5)}]			
			\draw (0,0)--(0,-0.5);
			\draw (1,0)--(1,-0.5);
			\node at (0.5,-0.3) {$\vdots$};
			\draw (0,0)--(0,1)--(1,1)--(1,0)--(0,0); \draw (0,0)--(1,1);\draw (0,0)--(0.5,1);
			\draw (0,1)--(0,2)--(1,2)--(1,1);\draw (0.5,1)--(0.5,2);
			\draw (0,2)--(0,3)--(1,3)--(1,2);\draw (0,2)--(1,3);\draw (0.5,2)--(1,3);
			\draw (0.5,1.5)--(1,1.5);\node at (0.75,1.25) {$1$};
			\node at (0.15,0.75) {$2$};\node at (0.53,0.75) {$0$};\node at (0.48,2.25) {$0$};
			\node at (0.7,0.3) {$2$};
			\node at (0.25,1.5) {$1$};
		\end{scope}
		\begin{scope}[shift={(0,-5)}]			
			\draw (0,0)--(0,-0.5);
			\draw (1,0)--(1,-0.5);
			\node at (0.5,-0.3) {$\vdots$};
			\draw (0,0)--(0,1)--(1,1)--(1,0)--(0,0); \draw (0,0)--(1,1);\draw (0,0)--(0.5,1);
			\draw (0,1)--(0,2)--(1,2)--(1,1);\draw (0.5,1)--(0.5,2);
			\draw (0,2)--(0,3)--(1,3)--(1,2);\draw (0,2)--(1,3);\draw (0.5,2)--(1,3);
			\draw (0.5,1.5)--(1,1.5);\node at (0.75,1.25) {$1$};\node at (0.75,1.75) {$2$};
			\node at (0.15,0.75) {$2$};\node at (0.53,0.75) {$0$};\node at (0.48,2.25) {$0$};
			\node at (0.7,0.3) {$2$};
			\node at (0.25,1.5) {$1$};
		\end{scope}
		\begin{scope}[shift={(4,-8)}]			
			\draw (0,0)--(0,-0.5);
			\draw (1,0)--(1,-0.5);
			\node at (0.5,-0.3) {$\vdots$};
			\draw (0,0)--(0,1)--(1,1)--(1,0)--(0,0); \draw (0,0)--(1,1);\draw (0,0)--(0.5,1);
			\draw (0,1)--(0,2)--(1,2)--(1,1);\draw (0.5,1)--(0.5,2);
			\draw (0,2)--(0,3)--(1,3)--(1,2);\draw (0,2)--(1,3);\draw (0.5,2)--(1,3);
			\draw (0.5,1.5)--(1,1.5);\node at (0.75,1.25) {$1$};\node at (0.75,1.75) {$2$};
			\node at (0.15,0.75) {$2$};\node at (0.53,0.75) {$0$};\node at (0.48,2.25) {$0$};
			\node at (0.7,0.3) {$2$};\node at (0.3,2.7) {$2$};
			\node at (0.25,1.5) {$1$};
		\end{scope}
		\begin{scope}[shift={(8,-11)}]			
			\draw (0,0)--(0,-0.5);
			\draw (1,0)--(1,-0.5);
			\node at (0.5,-0.3) {$\vdots$};
			\draw (0,0)--(0,1)--(1,1)--(1,0)--(0,0); \draw (0,0)--(1,1);\draw (0,0)--(0.5,1);
			\draw (0,1)--(0,2)--(1,2)--(1,1);\draw (0.5,1)--(0.5,2);
			\draw (0,2)--(0,3)--(1,3)--(1,2);\draw (0,2)--(1,3);\draw (0.5,2)--(1,3);
			\draw (0.5,1.5)--(1,1.5);\node at (0.75,1.25) {$1$};\node at (0.75,1.75) {$2$};
			\node at (0.15,0.75) {$2$};\node at (0.53,0.75) {$0$};\node at (0.48,2.25) {$0$};
			\node at (0.7,0.3) {$2$};\node at (0.3,2.7) {$2$};
			\node at (0.25,1.5) {$1$};
			\node at (0.85,2.25) {$2$};
		\end{scope}
		\begin{scope}[shift={(0,-11)}]			
			\draw (0,0)--(0,-0.5);
			\draw (1,0)--(1,-0.5);
			\node at (0.5,-0.3) {$\vdots$};
			\draw (0,0)--(0,1)--(1,1)--(1,0)--(0,0); \draw (0,0)--(1,1);\draw (0,0)--(0.5,1);
			\draw (0,1)--(0,2)--(1,2)--(1,1);\draw (0.5,1)--(0.5,2);
			\draw (0,2)--(0,3)--(1,3)--(1,2);\draw (0,2)--(1,3);\draw (0.5,2)--(1,3);
			\draw (0.5,1.5)--(1,1.5);\node at (0.75,1.25) {$1$};\node at (0.75,1.75) {$2$};
			\node at (0.15,0.75) {$2$};\node at (0.53,0.75) {$0$};\node at (0.48,2.25) {$0$};
			\node at (0.7,0.3) {$2$};\node at (0.3,2.7) {$2$};
			\node at (0.25,1.5) {$1$};
			\draw (0,3)--(0,4)--(1,4)--(1,3);\draw (0.5,3)--(0.5,4);\draw (0,3.5)--(0.5,3.5);
			\node at (0.25,3.25) {$1$};
		\end{scope}
		\begin{scope}[shift={(16,-11)}]			
			\draw (0,0)--(0,-0.5);
			\draw (1,0)--(1,-0.5);
			\node at (0.5,-0.3) {$\vdots$};
			\draw (0,0)--(0,1)--(1,1)--(1,0)--(0,0); \draw (0,0)--(1,1);\draw (0,0)--(0.5,1);
			\draw (0,1)--(0,2)--(1,2)--(1,1);\draw (0.5,1)--(0.5,2);
			\draw (0,2)--(0,3)--(1,3)--(1,2);\draw (0,2)--(1,3);\draw (0.5,2)--(1,3);
			\draw (0.5,1.5)--(1,1.5);\node at (0.75,1.25) {$1$};\node at (0.75,1.75) {$2$};
			\node at (0.15,0.75) {$2$};\node at (0.53,0.75) {$0$};\node at (0.48,2.25) {$0$};
			\node at (0.7,0.3) {$2$};\node at (0.3,2.7) {$2$};
			\node at (0.25,1.5) {$1$};
			\draw (0,3)--(0,4)--(1,4)--(1,3);\draw (0.5,3)--(0.5,4);\draw (0,3.5)--(0.5,3.5);
			\node at (0.25,3.25) {$1$}; \node at (0.25,3.75) {$2$};\node at (0.85,2.25) {$2$};
			\node at (0.75,3.5) {$1$};
			\draw (0,4)--(0,5)--(1,5)--(1,4);\draw (0,4)--(1,5);\draw (0,4)--(0.5,5);\node at (0.7,4.3) {$2$};	\node at (0.15,4.75) {$2$};
			\draw (0,5)--(0,6)--(1,6)--(1,5);	\draw (0.5,5)--(0.5,6);	\draw (0.5,5.5)--(1,5.5); \node at (0.25,5.5) {$1$};
			\draw (0,6)--(0,7)--(1,7)--(1,6);\draw (0,6)--(1,7);\draw (0.5,6)--(1,7);
			\node at (0.48,6.25) {$0$};
		\end{scope}
		\begin{scope}[shift={(3,-15)}]			
			\draw (0,0)--(0,-0.5);
			\draw (1,0)--(1,-0.5);
			\node at (0.5,-0.3) {$\vdots$};
			\draw (0,0)--(0,1)--(1,1)--(1,0)--(0,0); \draw (0,0)--(1,1);\draw (0,0)--(0.5,1);
			\draw (0,1)--(0,2)--(1,2)--(1,1);\draw (0.5,1)--(0.5,2);
			\draw (0,2)--(0,3)--(1,3)--(1,2);\draw (0,2)--(1,3);\draw (0.5,2)--(1,3);
			\draw (0.5,1.5)--(1,1.5);\node at (0.75,1.25) {$1$};\node at (0.75,1.75) {$2$};
			\node at (0.15,0.75) {$2$};\node at (0.53,0.75) {$0$};\node at (0.48,2.25) {$0$};
			\node at (0.7,0.3) {$2$};\node at (0.3,2.7) {$2$};
			\node at (0.25,1.5) {$1$};
			\draw (0,3)--(0,4)--(1,4)--(1,3);\draw (0.5,3)--(0.5,4);\draw (0,3.5)--(0.5,3.5);
			\node at (0.25,3.25) {$1$};\node at (0.25,3.75) {$2$};
		\end{scope}
		\begin{scope}[shift={(5,-15)}]			
			\draw (0,0)--(0,-0.5);
			\draw (1,0)--(1,-0.5);
			\node at (0.5,-0.3) {$\vdots$};
			\draw (0,0)--(0,1)--(1,1)--(1,0)--(0,0); \draw (0,0)--(1,1);\draw (0,0)--(0.5,1);
			\draw (0,1)--(0,2)--(1,2)--(1,1);\draw (0.5,1)--(0.5,2);
			\draw (0,2)--(0,3)--(1,3)--(1,2);\draw (0,2)--(1,3);\draw (0.5,2)--(1,3);
			\draw (0.5,1.5)--(1,1.5);\node at (0.75,1.25) {$1$};\node at (0.75,1.75) {$2$};
			\node at (0.15,0.75) {$2$};\node at (0.53,0.75) {$0$};\node at (0.48,2.25) {$0$};
			\node at (0.7,0.3) {$2$};\node at (0.3,2.7) {$2$};
			\node at (0.25,1.5) {$1$};
			\draw (0,3)--(0,4)--(1,4)--(1,3);\draw (0.5,3)--(0.5,4);\draw (0,3.5)--(0.5,3.5);
			\node at (0.25,3.25) {$1$};
			\node at (0.85,2.25) {$2$};
		\end{scope}
		\begin{scope}[shift={(0,-18)}]			
			\draw (0,0)--(0,-0.5);
			\draw (1,0)--(1,-0.5);
			\node at (0.5,-0.3) {$\vdots$};
			\draw (0,0)--(0,1)--(1,1)--(1,0)--(0,0); \draw (0,0)--(1,1);\draw (0,0)--(0.5,1);
			\draw (0,1)--(0,2)--(1,2)--(1,1);\draw (0.5,1)--(0.5,2);
			\draw (0,2)--(0,3)--(1,3)--(1,2);\draw (0,2)--(1,3);\draw (0.5,2)--(1,3);
			\draw (0.5,1.5)--(1,1.5);\node at (0.75,1.25) {$1$};\node at (0.75,1.75) {$2$};
			\node at (0.15,0.75) {$2$};\node at (0.53,0.75) {$0$};\node at (0.48,2.25) {$0$};
			\node at (0.7,0.3) {$2$};\node at (0.3,2.7) {$2$};
			\node at (0.25,1.5) {$1$};
			\draw (0,3)--(0,4)--(1,4)--(1,3);\draw (0.5,3)--(0.5,4);\draw (0,3.5)--(0.5,3.5);
			\node at (0.25,3.25) {$1$};\node at (0.75,3.5) {$1$}; \node at (0.85,2.25) {$2$};
		\end{scope}
		\begin{scope}[shift={(8,-18)}]			
			\draw (0,0)--(0,-0.5);
			\draw (1,0)--(1,-0.5);
			\node at (0.5,-0.3) {$\vdots$};
			\draw (0,0)--(0,1)--(1,1)--(1,0)--(0,0); \draw (0,0)--(1,1);\draw (0,0)--(0.5,1);
			\draw (0,1)--(0,2)--(1,2)--(1,1);\draw (0.5,1)--(0.5,2);
			\draw (0,2)--(0,3)--(1,3)--(1,2);\draw (0,2)--(1,3);\draw (0.5,2)--(1,3);
			\draw (0.5,1.5)--(1,1.5);\node at (0.75,1.25) {$1$};\node at (0.75,1.75) {$2$};
			\node at (0.15,0.75) {$2$};\node at (0.53,0.75) {$0$};\node at (0.48,2.25) {$0$};
			\node at (0.7,0.3) {$2$};\node at (0.3,2.7) {$2$};
			\node at (0.25,1.5) {$1$};
			\draw (0,3)--(0,4)--(1,4)--(1,3);\draw (0.5,3)--(0.5,4);\draw (0,3.5)--(0.5,3.5);
			\node at (0.25,3.25) {$1$}; \node at (0.25,3.75) {$2$};\node at (0.85,2.25) {$2$};
		\end{scope}
		\begin{scope}[shift={(4,-22)}]			
			\draw (0,0)--(0,-0.5);
			\draw (1,0)--(1,-0.5);
			\node at (0.5,-0.3) {$\vdots$};
			\draw (0,0)--(0,1)--(1,1)--(1,0)--(0,0); \draw (0,0)--(1,1);\draw (0,0)--(0.5,1);
			\draw (0,1)--(0,2)--(1,2)--(1,1);\draw (0.5,1)--(0.5,2);
			\draw (0,2)--(0,3)--(1,3)--(1,2);\draw (0,2)--(1,3);\draw (0.5,2)--(1,3);
			\draw (0.5,1.5)--(1,1.5);\node at (0.75,1.25) {$1$};\node at (0.75,1.75) {$2$};
			\node at (0.15,0.75) {$2$};\node at (0.53,0.75) {$0$};\node at (0.48,2.25) {$0$};
			\node at (0.7,0.3) {$2$};\node at (0.3,2.7) {$2$};
			\node at (0.25,1.5) {$1$};
			\draw (0,3)--(0,4)--(1,4)--(1,3);\draw (0.5,3)--(0.5,4);\draw (0,3.5)--(0.5,3.5);
			\node at (0.25,3.25) {$1$}; \node at (0.25,3.75) {$2$};\node at (0.85,2.25) {$2$};
			\node at (0.75,3.5) {$1$};
		\end{scope}
		\begin{scope}[shift={(8,-26)}]			
			\draw (0,0)--(0,-0.5);
			\draw (1,0)--(1,-0.5);
			\node at (0.5,-0.3) {$\vdots$};
			\draw (0,0)--(0,1)--(1,1)--(1,0)--(0,0); \draw (0,0)--(1,1);\draw (0,0)--(0.5,1);
			\draw (0,1)--(0,2)--(1,2)--(1,1);\draw (0.5,1)--(0.5,2);
			\draw (0,2)--(0,3)--(1,3)--(1,2);\draw (0,2)--(1,3);\draw (0.5,2)--(1,3);
			\draw (0.5,1.5)--(1,1.5);\node at (0.75,1.25) {$1$};\node at (0.75,1.75) {$2$};
			\node at (0.15,0.75) {$2$};\node at (0.53,0.75) {$0$};\node at (0.48,2.25) {$0$};
			\node at (0.7,0.3) {$2$};\node at (0.3,2.7) {$2$};
			\node at (0.25,1.5) {$1$};
			\draw (0,3)--(0,4)--(1,4)--(1,3);\draw (0.5,3)--(0.5,4);\draw (0,3.5)--(0.5,3.5);
			\node at (0.25,3.25) {$1$}; \node at (0.25,3.75) {$2$};\node at (0.85,2.25) {$2$};
			\node at (0.75,3.5) {$1$};
			\draw (0,4)--(0,5)--(1,5)--(1,4);\draw (0,4)--(1,5);\node at (0.7,4.3) {$2$};
		\end{scope}
		\begin{scope}[shift={(12,-29)}]			
			\draw (0,0)--(0,-0.5);
			\draw (1,0)--(1,-0.5);
			\node at (0.5,-0.3) {$\vdots$};
			\draw (0,0)--(0,1)--(1,1)--(1,0)--(0,0); \draw (0,0)--(1,1);\draw (0,0)--(0.5,1);
			\draw (0,1)--(0,2)--(1,2)--(1,1);\draw (0.5,1)--(0.5,2);
			\draw (0,2)--(0,3)--(1,3)--(1,2);\draw (0,2)--(1,3);\draw (0.5,2)--(1,3);
			\draw (0.5,1.5)--(1,1.5);\node at (0.75,1.25) {$1$};\node at (0.75,1.75) {$2$};
			\node at (0.15,0.75) {$2$};\node at (0.53,0.75) {$0$};\node at (0.48,2.25) {$0$};
			\node at (0.7,0.3) {$2$};\node at (0.3,2.7) {$2$};
			\node at (0.25,1.5) {$1$};
			\draw (0,3)--(0,4)--(1,4)--(1,3);\draw (0.5,3)--(0.5,4);\draw (0,3.5)--(0.5,3.5);
			\node at (0.25,3.25) {$1$}; \node at (0.25,3.75) {$2$};\node at (0.85,2.25) {$2$};
			\node at (0.75,3.5) {$1$};
			\draw (0,4)--(0,5)--(1,5)--(1,4);\draw (0,4)--(1,5);\draw (0,4)--(0.5,5);\node at (0.7,4.3) {$2$};	\node at (0.15,4.75) {$2$};
		\end{scope}
		\begin{scope}[shift={(8,-34)}]			
			\draw (0,0)--(0,-0.5);
			\draw (1,0)--(1,-0.5);
			\node at (0.5,-0.3) {$\vdots$};
			\draw (0,0)--(0,1)--(1,1)--(1,0)--(0,0); \draw (0,0)--(1,1);\draw (0,0)--(0.5,1);
			\draw (0,1)--(0,2)--(1,2)--(1,1);\draw (0.5,1)--(0.5,2);
			\draw (0,2)--(0,3)--(1,3)--(1,2);\draw (0,2)--(1,3);\draw (0.5,2)--(1,3);
			\draw (0.5,1.5)--(1,1.5);\node at (0.75,1.25) {$1$};\node at (0.75,1.75) {$2$};
			\node at (0.15,0.75) {$2$};\node at (0.53,0.75) {$0$};\node at (0.48,2.25) {$0$};
			\node at (0.7,0.3) {$2$};\node at (0.3,2.7) {$2$};
			\node at (0.25,1.5) {$1$};
			\draw (0,3)--(0,4)--(1,4)--(1,3);\draw (0.5,3)--(0.5,4);\draw (0,3.5)--(0.5,3.5);
			\node at (0.25,3.25) {$1$}; \node at (0.25,3.75) {$2$};\node at (0.85,2.25) {$2$};
			\node at (0.75,3.5) {$1$};
			\draw (0,4)--(0,5)--(1,5)--(1,4);\draw (0,4)--(1,5);\draw (0,4)--(0.5,5);\node at (0.7,4.3) {$2$};	\node at (0.15,4.75) {$2$};
			\draw (0,5)--(0,6)--(1,6)--(1,5);	\draw (0.5,5)--(0.5,6);	\draw (0.5,5.5)--(1,5.5); \node at (0.25,5.5) {$1$};
		\end{scope}
	\end{tikzpicture}
	\vskip -1pc
	\caption{Young column realization of the perfect crystal $B'_{1}$}\label{FIG:realization of B1'} 
\end{figure}

Let $C_1$ (resp. $C'_1$) denote the set of all equivalence classes of Young columns for $U_q(D_4^{(3)})$ (resp. $U_q(G_2^{(1)})$). 
We define
\begin{align*}
&\psi:C_1\to B_1,\quad c_i\mapsto u_i,\quad i\in\{1,2,3,0,\bar{3},\bar{2},\bar{1},\phi\},\\
&\psi':C'_1\to B'_1,\quad c'_i\mapsto v_i,\quad i\in\{1,2,3,4,5,6,7,0,\bar{7},\bar{6},\bar{5},\bar{4},\bar{3},\bar{2},\bar{1}\}.
\end{align*}

By the crystal graphs in Example \ref{ex:level 1 perfect crystal D43}, Example \ref{ex:level 1 perfect crystal G21} and the crystal graphs in Proposition \ref{prop:realization of perfect crystal}, we have the following proposition.

\begin{Prop} \label{perfect isomorphism}
The map $\psi:C_1\to B_1$ (resp. $\psi':C'_1\to B'_1$) is a crystal isomorphism.
\end{Prop}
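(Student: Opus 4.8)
The plan is to transport the combinatorial crystal structure on Young columns---read off from the $i$-signatures tabulated in Figure~\ref{all possible Young columns D43} and Figure~\ref{all possible Young columns G21}---onto the perfect crystals $B_1$ and $B'_1$, and then to verify the defining conditions of a crystal isomorphism vertex by vertex. Since $\psi$ (resp.\ $\psi'$) is prescribed on labels by $c_i\mapsto u_i$ (resp.\ $c'_i\mapsto v_i$), it is manifestly a bijection of the underlying finite sets, and it extends to $C_1\cup\{0\}\to B_1\cup\{0\}$ by $\psi(0)=0$ (so condition~(1) of a crystal morphism holds). It therefore remains to equip $C_1$ (resp.\ $C'_1$) with a classical crystal structure and to check that $\psi$ preserves $\wt$, $\varepsilon_i$, $\varphi_i$ and intertwines the Kashiwara operators.

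First I would make the crystal structure on $C_1$ precise. For an equivalence class $y$ and each $i\in\{0,1,2\}$, the signature $\mathrm{sign}_i(y)=(\underbrace{-,\dots,-}_{r},\underbrace{+,\dots,+}_{a})$ has all minus signs preceding all plus signs, so no cancellation occurs; I set $\varepsilon_i(y)=r$ and $\varphi_i(y)=a$, which are exactly the entries recorded in the two figures. I define $\tilde f_i y$ to be the column obtained by adding an $i$-block at the relevant admissible $i$-slot when $a>0$ and $\tilde f_i y=0$ when $a=0$, and dually $\tilde e_i y$ by deleting a removable $i$-block when $r>0$; one must verify that these operations send Young columns to Young columns and descend to the $180^\circ$-rotation equivalence, invoking the conventions of the Remark for $c'_5$ and $c'_7$. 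The weight is then determined by the block content of $y$ relative to the ground-state normalization, and the compatibility $\varphi_i(y)-\varepsilon_i(y)=\langle h_i,\wt(y)\rangle$ is checked directly from the signatures. This exhibits $C_1$ and $C'_1$ as classical crystals and reproduces precisely the arrows drawn in Figure~\ref{FIG:realization of B1} and Figure~\ref{FIG:realization of B1'}.

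With both crystals in hand, the remaining verification is a finite comparison. For $D_4^{(3)}$ there are the eight classes $c_1,\dots,c_\phi$, and for $G_2^{(1)}$ the fifteen classes $c'_1,\dots,c'_{\bar 1}$; for each I would compare the triple $\bigl(\wt,(\varepsilon_i)_i,(\varphi_i)_i\bigr)$ on the Young column side with the corresponding values for $u_i\in B_1$ (resp.\ $v_i\in B'_1$) read from the perfect crystal graphs of Example~\ref{ex:level 1 perfect crystal D43} and Example~\ref{ex:level 1 perfect crystal G21}. Because every colored $\tilde f_i$-edge shifts the weight by $-\alpha_i$ and both graphs are connected, it suffices to match a single weight (say that of the class containing the ground-state column) and then to check that the colored directed graph of Figure~\ref{FIG:realization of B1} coincides arrow-for-arrow, under $c_i\mapsto u_i$, with the graph of Example~\ref{ex:level 1 perfect crystal D43}; likewise for $\psi'$ using Figure~\ref{FIG:realization of B1'} and Example~\ref{ex:level 1 perfect crystal G21}. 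Once the colored graphs and all the values $\varepsilon_i,\varphi_i$ are seen to agree, conditions~(2) and~(3) in the definition of a crystal morphism are satisfied, and bijectivity upgrades $\psi$ and $\psi'$ to isomorphisms.

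The main obstacle lies in the second step rather than the third: showing that the naive combinatorial rules ``add/remove an admissible $i$-block'' are well defined on equivalence classes of Young columns and genuinely realize the Kashiwara operators, i.e.\ that they reproduce \emph{exactly} the edges of Figure~\ref{FIG:realization of B1} and Figure~\ref{FIG:realization of B1'}. The delicate cases are the columns whose top involves an ambiguous slot---most notably $c'_5$ and $c'_7$ in type $G_2^{(1)}$, where the stated conventions (the upper $2$-slot of $c'_5$ admissible and the lower one second admissible, and the $2$-block of $c'_7$ frozen) are precisely what make the signatures match the perfect crystal---together with the interaction of these moves with the $180^\circ$ rotation that identifies $0$-blocks with $2$-blocks across a column. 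Once this compatibility is secured, the residual $\wt$/$\varepsilon$/$\varphi$ bookkeeping and the edge-by-edge comparison are routine finite checks.
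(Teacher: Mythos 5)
Your proposal is correct and follows essentially the same route as the paper: the paper's proof is precisely the finite, graph-by-graph comparison you describe, deducing the isomorphism by matching the colored crystal graphs of Proposition \ref{prop:realization of perfect crystal} with those of Example \ref{ex:level 1 perfect crystal D43} and Example \ref{ex:level 1 perfect crystal G21}. Your additional care about well-definedness on equivalence classes and the conventions for $c'_5$ and $c'_7$ only makes explicit what the paper leaves to inspection.
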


\subsection{Young walls} Now we introduce the {\it wall pattern} and the {\it ground-state wall} as follows.

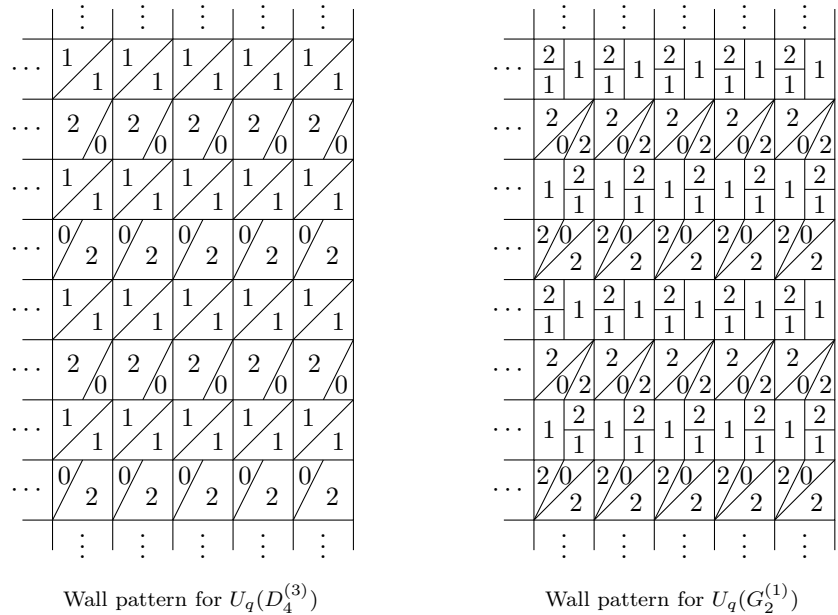
\begin{figure}[H] 
	{\small
		\centering
		\begin{tikzpicture}[scale=0.8]	
			\begin{scope}[shift={(-4,0)}]	
			\node at (-1.7,-1.3) {\scriptsize{Wall pattern for $U_q(D_4^{(3)})$}};		
			\draw (1,0)--(1,-0.5);
			\node at (0.5,-0.3) {$\vdots$};
			\draw (0,0)--(1,0)--(1,1)--(0,1);
			\draw (0,0)--(0.5,1);
			\node at (0.2,0.75) {$0$};
			\node at (0.65,0.4) {$2$};	
			\draw (1,1)--(1,2)--(0,2);
			\draw (0,1)--(1,2);
			\node at (0.75,1.3) {$1$};
			\node at (0.25,1.7) {$1$};
			\draw (1,2)--(1,3)--(0,3);
			\draw (0.5,2)--(1,3);
			\node at (0.8,2.25) {$0$};
			\node at (0.35,2.6) {$2$};
			\draw (1,3)--(1,4)--(0,4);
			\draw (0,3)--(1,4);
			\node at (0.75,3.3) {$1$};
			\node at (0.25,3.7) {$1$};
			\begin{scope}[shift={(0,4)}]
				
				\draw (1,4)--(1,4.5);
				\node at (0.5,4.5) {$\vdots$};
				\draw (1,0)--(1,1)--(0,1);
				\draw (0,0)--(0.5,1);
				\node at (0.2,0.75) {$0$};
				\node at (0.65,0.4) {$2$};	
				\draw (1,1)--(1,2)--(0,2);
				\draw (0,1)--(1,2);
				\node at (0.75,1.3) {$1$};
				\node at (0.25,1.7) {$1$};
				\draw (1,2)--(1,3)--(0,3);
				\draw (0.5,2)--(1,3);
				\node at (0.8,2.25) {$0$};
				\node at (0.35,2.6) {$2$};
				\draw (1,3)--(1,4)--(0,4);
				\draw (0,3)--(1,4);
				\node at (0.75,3.3) {$1$};
				\node at (0.25,3.7) {$1$};		
			\end{scope}
			\begin{scope}[shift={(-1,0)}]
				\draw (1,0)--(1,-0.5);
				\node at (0.5,-0.3) {$\vdots$};
				\draw (0,0)--(1,0)--(1,1)--(0,1);
				\draw (0,0)--(0.5,1);
				\node at (0.2,0.75) {$0$};
				\node at (0.65,0.4) {$2$};	
				\draw (1,1)--(1,2)--(0,2);
				\draw (0,1)--(1,2);
				\node at (0.75,1.3) {$1$};
				\node at (0.25,1.7) {$1$};
				\draw (1,2)--(1,3)--(0,3);
				\draw (0.5,2)--(1,3);
				\node at (0.8,2.25) {$0$};
				\node at (0.35,2.6) {$2$};
				\draw (1,3)--(1,4)--(0,4);
				\draw (0,3)--(1,4);
				\node at (0.75,3.3) {$1$};
				\node at (0.25,3.7) {$1$};
				\begin{scope}[shift={(0,4)}]
					
					\draw (1,4)--(1,4.5);
					\node at (0.5,4.5) {$\vdots$};
					\draw (1,0)--(1,1)--(0,1);
					\draw (0,0)--(0.5,1);
					\node at (0.2,0.75) {$0$};
					\node at (0.65,0.4) {$2$};	
					\draw (1,1)--(1,2)--(0,2);
					\draw (0,1)--(1,2);
					\node at (0.75,1.3) {$1$};
					\node at (0.25,1.7) {$1$};
					\draw (1,2)--(1,3)--(0,3);
					\draw (0.5,2)--(1,3);
					\node at (0.8,2.25) {$0$};
					\node at (0.35,2.6) {$2$};
					\draw (1,3)--(1,4)--(0,4);
					\draw (0,3)--(1,4);
					\node at (0.75,3.3) {$1$};
					\node at (0.25,3.7) {$1$};		
				\end{scope}	
			\end{scope}
			\begin{scope}[shift={(-2,0)}]
				\draw (1,0)--(1,-0.5);
				\node at (0.5,-0.3) {$\vdots$};
				\draw (0,0)--(1,0)--(1,1)--(0,1);
				\draw (0,0)--(0.5,1);
				\node at (0.2,0.75) {$0$};
				\node at (0.65,0.4) {$2$};	
				\draw (1,1)--(1,2)--(0,2);
				\draw (0,1)--(1,2);
				\node at (0.75,1.3) {$1$};
				\node at (0.25,1.7) {$1$};
				\draw (1,2)--(1,3)--(0,3);
				\draw (0.5,2)--(1,3);
				\node at (0.8,2.25) {$0$};
				\node at (0.35,2.6) {$2$};
				\draw (1,3)--(1,4)--(0,4);
				\draw (0,3)--(1,4);
				\node at (0.75,3.3) {$1$};
				\node at (0.25,3.7) {$1$};
				\begin{scope}[shift={(0,4)}]
					
					\draw (1,4)--(1,4.5);
					\node at (0.5,4.5) {$\vdots$};
					\draw (1,0)--(1,1)--(0,1);
					\draw (0,0)--(0.5,1);
					\node at (0.2,0.75) {$0$};
					\node at (0.65,0.4) {$2$};	
					\draw (1,1)--(1,2)--(0,2);
					\draw (0,1)--(1,2);
					\node at (0.75,1.3) {$1$};
					\node at (0.25,1.7) {$1$};
					\draw (1,2)--(1,3)--(0,3);
					\draw (0.5,2)--(1,3);
					\node at (0.8,2.25) {$0$};
					\node at (0.35,2.6) {$2$};
					\draw (1,3)--(1,4)--(0,4);
					\draw (0,3)--(1,4);
					\node at (0.75,3.3) {$1$};
					\node at (0.25,3.7) {$1$};		
				\end{scope}	
			\end{scope}
			\begin{scope}[shift={(-3,0)}]
				\draw (1,0)--(1,-0.5);
				\node at (0.5,-0.3) {$\vdots$};
				\draw (0,0)--(1,0)--(1,1)--(0,1);
				\draw (0,0)--(0.5,1);
				\node at (0.2,0.75) {$0$};
				\node at (0.65,0.4) {$2$};	
				\draw (1,1)--(1,2)--(0,2);
				\draw (0,1)--(1,2);
				\node at (0.75,1.3) {$1$};
				\node at (0.25,1.7) {$1$};
				\draw (1,2)--(1,3)--(0,3);
				\draw (0.5,2)--(1,3);
				\node at (0.8,2.25) {$0$};
				\node at (0.35,2.6) {$2$};
				\draw (1,3)--(1,4)--(0,4);
				\draw (0,3)--(1,4);
				\node at (0.75,3.3) {$1$};
				\node at (0.25,3.7) {$1$};
				\begin{scope}[shift={(0,4)}]
					
					\draw (1,4)--(1,4.5);
					\node at (0.5,4.5) {$\vdots$};
					\draw (1,0)--(1,1)--(0,1);
					\draw (0,0)--(0.5,1);
					\node at (0.2,0.75) {$0$};
					\node at (0.65,0.4) {$2$};	
					\draw (1,1)--(1,2)--(0,2);
					\draw (0,1)--(1,2);
					\node at (0.75,1.3) {$1$};
					\node at (0.25,1.7) {$1$};
					\draw (1,2)--(1,3)--(0,3);
					\draw (0.5,2)--(1,3);
					\node at (0.8,2.25) {$0$};
					\node at (0.35,2.6) {$2$};
					\draw (1,3)--(1,4)--(0,4);
					\draw (0,3)--(1,4);
					\node at (0.75,3.3) {$1$};
					\node at (0.25,3.7) {$1$};		
				\end{scope}	
			\end{scope}
			\begin{scope}[shift={(-4,0)}]
				\draw (0,0)--(0,-0.5);
				\draw (1,0)--(1,-0.5);
				\node at (0.5,-0.3) {$\vdots$};
				\draw (0,0)--(1,0)--(1,1)--(0,1)--(0,0);
				\draw (0,0)--(-0.5,0);
				\draw (0,1)--(-0.5,1);
				\draw (0,2)--(-0.5,2);
				\draw (0,3)--(-0.5,3);
				\node at (-0.4,0.5) {$\cdots$};
				\node at (-0.4,1.5) {$\cdots$};
				\node at (-0.4,2.5) {$\cdots$};
				\node at (-0.4,3.5) {$\cdots$};
				\draw (0,0)--(0.5,1);
				\node at (0.2,0.75) {$0$};
				\node at (0.65,0.4) {$2$};	
				\draw (1,1)--(1,2)--(0,2)--(0,1);
				\draw (0,1)--(1,2);
				\node at (0.75,1.3) {$1$};
				\node at (0.25,1.7) {$1$};
				\draw (1,2)--(1,3)--(0,3)--(0,2);
				\draw (0.5,2)--(1,3);
				\node at (0.8,2.25) {$0$};
				\node at (0.35,2.6) {$2$};
				\draw (1,3)--(1,4)--(0,4)--(0,3);
				\draw (0,3)--(1,4);
				\node at (0.75,3.3) {$1$};
				\node at (0.25,3.7) {$1$};
				\begin{scope}[shift={(0,4)}]
					\draw (0,4)--(0,4.5);
					\draw (1,4)--(1,4.5);
					\node at (0.5,4.5) {$\vdots$};
					\draw (1,0)--(1,1)--(0,1)--(0,0);
					\draw (0,0)--(-0.5,0);
					\draw (0,1)--(-0.5,1);
					\draw (0,2)--(-0.5,2);
					\draw (0,3)--(-0.5,3);
					\draw (0,4)--(-0.5,4);
					\node at (-0.4,0.5) {$\cdots$};
					\node at (-0.4,1.5) {$\cdots$};
					\node at (-0.4,2.5) {$\cdots$};
					\node at (-0.4,3.5) {$\cdots$};
					\draw (0,0)--(0.5,1);
					\node at (0.2,0.75) {$0$};
					\node at (0.65,0.4) {$2$};	
					\draw (1,1)--(1,2)--(0,2)--(0,1);
					\draw (0,1)--(1,2);
					\node at (0.75,1.3) {$1$};
					\node at (0.25,1.7) {$1$};
					\draw (1,2)--(1,3)--(0,3)--(0,2);
					\draw (0.5,2)--(1,3);
					\node at (0.8,2.25) {$0$};
					\node at (0.35,2.6) {$2$};
					\draw (1,3)--(1,4)--(0,4)--(0,3);
					\draw (0,3)--(1,4);
					\node at (0.75,3.3) {$1$};
					\node at (0.25,3.7) {$1$};	
					
				\end{scope}
			\end{scope}
			\end{scope}
		
	\begin{scope}[shift={(4,0)}]
		\node at (-1.7,-1.3) {\scriptsize{Wall pattern for $U_q(G_2^{(1)})$}};			
	\begin{scope}[shift={(0,0)}]
		\draw (1,0)--(1,-0.5);
		\node at (0.5,-0.3) {$\vdots$};
		
		\draw (0,0)--(1,0)--(1,1)--(0,1)--(0,0);
		\draw (0,0)--(0.5,1);\draw (0,0)--(1,1);
		
		\draw (1,1)--(1,2)--(0,2)--(0,1);
		\draw (0.5,1.5)--(1,1.5);\draw (0.5,1)--(0.5,2);
		
		\draw (1,2)--(1,3)--(0,3)--(0,2);
		\draw (0.5,2)--(1,3);	\draw (0,2)--(1,3);
		
		\draw (1,3)--(1,4)--(0,4)--(0,3);
		\draw (0.5,3)--(0.5,4);\draw (0,3.5)--(0.5,3.5);
		\node at (0.15,0.75) {$2$};	
		\node at (0.53,0.75) {$0$};
		\node at (0.7,0.3) {$2$};
		\node at (0.75,1.75) {$2$};
		\node at (0.75,1.25) {$1$};
		\node at (0.25,1.5) {$1$};
		\node at (0.85,2.25) {$2$};	
		\node at (0.48,2.25) {$0$};
		\node at (0.3,2.7) {$2$};
		\node at (0.25,3.75) {$2$};
		\node at (0.25,3.25) {$1$};
		\node at (0.75,3.5) {$1$};
		\begin{scope}[shift={(0,4)}]
			\draw (1,4)--(1,4.5);
			\node at (0.5,4.5) {$\vdots$};
			\draw (1,0)--(1,1)--(0,1)--(0,0);
			\draw (0,0)--(0.5,1);\draw (0,0)--(1,1);
			
			\draw (1,1)--(1,2)--(0,2)--(0,1);
			\draw (0.5,1.5)--(1,1.5);\draw (0.5,1)--(0.5,2);
			
			\draw (1,2)--(1,3)--(0,3)--(0,2);
			\draw (0.5,2)--(1,3);	\draw (0,2)--(1,3);
			
			\draw (1,3)--(1,4)--(0,4)--(0,3);
			\draw (0.5,3)--(0.5,4);\draw (0,3.5)--(0.5,3.5);
			\node at (0.15,0.75) {$2$};	
			\node at (0.53,0.75) {$0$};
			\node at (0.7,0.3) {$2$};
			\node at (0.75,1.75) {$2$};
			\node at (0.75,1.25) {$1$};
			\node at (0.25,1.5) {$1$};
			\node at (0.85,2.25) {$2$};	
			\node at (0.48,2.25) {$0$};
			\node at (0.3,2.7) {$2$};
			\node at (0.25,3.75) {$2$};
			\node at (0.25,3.25) {$1$};
			\node at (0.75,3.5) {$1$};
		\end{scope}
	\end{scope}
	
	\begin{scope}[shift={(-1,0)}]
		\draw (1,0)--(1,-0.5);
		\node at (0.5,-0.3) {$\vdots$};
		
		\draw (1,0)--(0,0)--(0,1)--(1,1);
		\draw (0,0)--(0.5,1);\draw (0,0)--(1,1);
		
		\draw (0,1)--(0,2)--(1,2);
		\draw (0.5,1.5)--(1,1.5);\draw (0.5,1)--(0.5,2);
		
		\draw (0,2)--(0,3)--(1,3);
		\draw (0.5,2)--(1,3);	\draw (0,2)--(1,3);
		
		\draw (0,3)--(0,4)--(1,4);
		\draw (0.5,3)--(0.5,4);\draw (0,3.5)--(0.5,3.5);
		\node at (0.15,0.75) {$2$};	
		\node at (0.53,0.75) {$0$};
		\node at (0.7,0.3) {$2$};
		\node at (0.75,1.75) {$2$};
		\node at (0.75,1.25) {$1$};
		\node at (0.25,1.5) {$1$};
		\node at (0.85,2.25) {$2$};	
		\node at (0.48,2.25) {$0$};
		\node at (0.3,2.7) {$2$};
		\node at (0.25,3.75) {$2$};
		\node at (0.25,3.25) {$1$};
		\node at (0.75,3.5) {$1$};
		\begin{scope}[shift={(0,4)}]
			\draw (1,4)--(1,4.5);
			\node at (0.5,4.5) {$\vdots$};
			\draw (0,0)--(0,1)--(1,1);
			\draw (0,0)--(0.5,1);\draw (0,0)--(1,1);
			
			\draw (0,1)--(0,2)--(1,2);
			\draw (0.5,1.5)--(1,1.5);\draw (0.5,1)--(0.5,2);
			
			\draw (0,2)--(0,3)--(1,3);
			\draw (0.5,2)--(1,3);	\draw (0,2)--(1,3);
			
			\draw (0,3)--(0,4)--(1,4);
			\draw (0.5,3)--(0.5,4);\draw (0,3.5)--(0.5,3.5);
			\node at (0.15,0.75) {$2$};	
			\node at (0.53,0.75) {$0$};
			\node at (0.7,0.3) {$2$};
			\node at (0.75,1.75) {$2$};
			\node at (0.75,1.25) {$1$};
			\node at (0.25,1.5) {$1$};
			\node at (0.85,2.25) {$2$};	
			\node at (0.48,2.25) {$0$};
			\node at (0.3,2.7) {$2$};
			\node at (0.25,3.75) {$2$};
			\node at (0.25,3.25) {$1$};
			\node at (0.75,3.5) {$1$};
		\end{scope}
	\end{scope}
	\begin{scope}[shift={(-2,0)}]
		\draw (1,0)--(1,-0.5);
		\node at (0.5,-0.3) {$\vdots$};
		
		\draw (1,0)--(0,0)--(0,1)--(1,1);
		\draw (0,0)--(0.5,1);\draw (0,0)--(1,1);
		
		\draw (0,1)--(0,2)--(1,2);
		\draw (0.5,1.5)--(1,1.5);\draw (0.5,1)--(0.5,2);
		
		\draw (0,2)--(0,3)--(1,3);
		\draw (0.5,2)--(1,3);	\draw (0,2)--(1,3);
		
		\draw (0,3)--(0,4)--(1,4);
		\draw (0.5,3)--(0.5,4);\draw (0,3.5)--(0.5,3.5);
		\node at (0.15,0.75) {$2$};	
		\node at (0.53,0.75) {$0$};
		\node at (0.7,0.3) {$2$};
		\node at (0.75,1.75) {$2$};
		\node at (0.75,1.25) {$1$};
		\node at (0.25,1.5) {$1$};
		\node at (0.85,2.25) {$2$};	
		\node at (0.48,2.25) {$0$};
		\node at (0.3,2.7) {$2$};
		\node at (0.25,3.75) {$2$};
		\node at (0.25,3.25) {$1$};
		\node at (0.75,3.5) {$1$};
		\begin{scope}[shift={(0,4)}]
			\draw (1,4)--(1,4.5);
			\node at (0.5,4.5) {$\vdots$};
			\draw (0,0)--(0,1)--(1,1);
			\draw (0,0)--(0.5,1);\draw (0,0)--(1,1);
			
			\draw (0,1)--(0,2)--(1,2);
			\draw (0.5,1.5)--(1,1.5);\draw (0.5,1)--(0.5,2);
			
			\draw (0,2)--(0,3)--(1,3);
			\draw (0.5,2)--(1,3);	\draw (0,2)--(1,3);
			
			\draw (0,3)--(0,4)--(1,4);
			\draw (0.5,3)--(0.5,4);\draw (0,3.5)--(0.5,3.5);
			\node at (0.15,0.75) {$2$};	
			\node at (0.53,0.75) {$0$};
			\node at (0.7,0.3) {$2$};
			\node at (0.75,1.75) {$2$};
			\node at (0.75,1.25) {$1$};
			\node at (0.25,1.5) {$1$};
			\node at (0.85,2.25) {$2$};	
			\node at (0.48,2.25) {$0$};
			\node at (0.3,2.7) {$2$};
			\node at (0.25,3.75) {$2$};
			\node at (0.25,3.25) {$1$};
			\node at (0.75,3.5) {$1$};
		\end{scope}
	\end{scope}
	\begin{scope}[shift={(-3,0)}]
		\draw (1,0)--(1,-0.5);
		\node at (0.5,-0.3) {$\vdots$};
		
		\draw (1,0)--(0,0)--(0,1)--(1,1);
		\draw (0,0)--(0.5,1);\draw (0,0)--(1,1);
		
		\draw (0,1)--(0,2)--(1,2);
		\draw (0.5,1.5)--(1,1.5);\draw (0.5,1)--(0.5,2);
		
		\draw (0,2)--(0,3)--(1,3);
		\draw (0.5,2)--(1,3);	\draw (0,2)--(1,3);
		
		\draw (0,3)--(0,4)--(1,4);
		\draw (0.5,3)--(0.5,4);\draw (0,3.5)--(0.5,3.5);
		\node at (0.15,0.75) {$2$};	
		\node at (0.53,0.75) {$0$};
		\node at (0.7,0.3) {$2$};
		\node at (0.75,1.75) {$2$};
		\node at (0.75,1.25) {$1$};
		\node at (0.25,1.5) {$1$};
		\node at (0.85,2.25) {$2$};	
		\node at (0.48,2.25) {$0$};
		\node at (0.3,2.7) {$2$};
		\node at (0.25,3.75) {$2$};
		\node at (0.25,3.25) {$1$};
		\node at (0.75,3.5) {$1$};
		\begin{scope}[shift={(0,4)}]
			\draw (1,4)--(1,4.5);
			\node at (0.5,4.5) {$\vdots$};
			\draw (0,0)--(0,1)--(1,1);
			\draw (0,0)--(0.5,1);\draw (0,0)--(1,1);
			
			\draw (0,1)--(0,2)--(1,2);
			\draw (0.5,1.5)--(1,1.5);\draw (0.5,1)--(0.5,2);
			
			\draw (0,2)--(0,3)--(1,3);
			\draw (0.5,2)--(1,3);	\draw (0,2)--(1,3);
			
			\draw (0,3)--(0,4)--(1,4);
			\draw (0.5,3)--(0.5,4);\draw (0,3.5)--(0.5,3.5);
			\node at (0.15,0.75) {$2$};	
			\node at (0.53,0.75) {$0$};
			\node at (0.7,0.3) {$2$};
			\node at (0.75,1.75) {$2$};
			\node at (0.75,1.25) {$1$};
			\node at (0.25,1.5) {$1$};
			\node at (0.85,2.25) {$2$};	
			\node at (0.48,2.25) {$0$};
			\node at (0.3,2.7) {$2$};
			\node at (0.25,3.75) {$2$};
			\node at (0.25,3.25) {$1$};
			\node at (0.75,3.5) {$1$};
		\end{scope}
	\end{scope}
	\begin{scope}[shift={(-4,0)}]
		\draw (0,0)--(-0.5,0);
		\draw (0,1)--(-0.5,1);
		\draw (0,2)--(-0.5,2);
		\draw (0,3)--(-0.5,3);
		\node at (-0.4,0.5) {$\cdots$};
		\node at (-0.4,1.5) {$\cdots$};
		\node at (-0.4,2.5) {$\cdots$};
		\node at (-0.4,3.5) {$\cdots$};
		\draw (0,0)--(0,-0.5);
		\draw (1,0)--(1,-0.5);
		\node at (0.5,-0.3) {$\vdots$};
		
		\draw (1,0)--(0,0)--(0,1)--(1,1);
		\draw (0,0)--(0.5,1);\draw (0,0)--(1,1);
		
		\draw (0,1)--(0,2)--(1,2);
		\draw (0.5,1.5)--(1,1.5);\draw (0.5,1)--(0.5,2);
		
		\draw (0,2)--(0,3)--(1,3);
		\draw (0.5,2)--(1,3);	\draw (0,2)--(1,3);
		
		\draw (0,3)--(0,4)--(1,4);
		\draw (0.5,3)--(0.5,4);\draw (0,3.5)--(0.5,3.5);
		\node at (0.15,0.75) {$2$};	
		\node at (0.53,0.75) {$0$};
		\node at (0.7,0.3) {$2$};
		\node at (0.75,1.75) {$2$};
		\node at (0.75,1.25) {$1$};
		\node at (0.25,1.5) {$1$};
		\node at (0.85,2.25) {$2$};	
		\node at (0.48,2.25) {$0$};
		\node at (0.3,2.7) {$2$};
		\node at (0.25,3.75) {$2$};
		\node at (0.25,3.25) {$1$};
		\node at (0.75,3.5) {$1$};
		\begin{scope}[shift={(0,4)}]
			
			\draw (0,0)--(-0.5,0);
			\draw (0,1)--(-0.5,1);
			\draw (0,2)--(-0.5,2);
			\draw (0,3)--(-0.5,3);
			\draw (0,4)--(-0.5,4);
			\node at (-0.4,0.5) {$\cdots$};
			\node at (-0.4,1.5) {$\cdots$};
			\node at (-0.4,2.5) {$\cdots$};
			\node at (-0.4,3.5) {$\cdots$};
			\draw (0,4)--(0,4.5);
			\draw (1,4)--(1,4.5);
			\node at (0.5,4.5) {$\vdots$};
			\draw (0,0)--(0,1)--(1,1);
			\draw (0,0)--(0.5,1);\draw (0,0)--(1,1);
			
			\draw (0,1)--(0,2)--(1,2);
			\draw (0.5,1.5)--(1,1.5);\draw (0.5,1)--(0.5,2);
			
			\draw (0,2)--(0,3)--(1,3);
			\draw (0.5,2)--(1,3);	\draw (0,2)--(1,3);
			
			\draw (0,3)--(0,4)--(1,4);
			\draw (0.5,3)--(0.5,4);\draw (0,3.5)--(0.5,3.5);
			\node at (0.15,0.75) {$2$};	
			\node at (0.53,0.75) {$0$};
			\node at (0.7,0.3) {$2$};
			\node at (0.75,1.75) {$2$};
			\node at (0.75,1.25) {$1$};
			\node at (0.25,1.5) {$1$};
			\node at (0.85,2.25) {$2$};	
			\node at (0.48,2.25) {$0$};
			\node at (0.3,2.7) {$2$};
			\node at (0.25,3.75) {$2$};
			\node at (0.25,3.25) {$1$};
			\node at (0.75,3.5) {$1$};
		\end{scope}
	\end{scope}
\end{scope}		
		\end{tikzpicture}
	}
	\vskip -0.5pc
	\caption{The wall pattern}\label{wall pattern}
\end{figure}


\begin{figure}[H]
	{\small
		\centering
		\begin{tikzpicture}[scale=0.8]
			\begin{scope}[shift={(-7,0)}]	
				\node at (-1.6,-2.3) {\scriptsize{Ground-state wall $Y_{\Lambda_0}$ for $U_q(D_4^{(3)})$}};		
			\draw (1,-1)--(1,-1.5);
			\node at (0.5,-1.3) {$\vdots$};
			\draw (1,0)--(1,-1);
			\draw (0,-1)--(1,-1);
			\draw (0,-1)--(1,0);
			\node at (0.75,-0.7) {$1$};
			\node at (0.25,-0.3) {$1$};
			\draw (0,0)--(1,0)--(1,1)--(0,1);
			\draw (0,0)--(0.5,1);
			\node at (0.65,0.4) {$2$};	
			\draw (1,1)--(1,2)--(0,2);
			\draw (0,1)--(1,2);
			\node at (0.75,1.3) {$1$};
			\draw (1,2)--(1,3)--(0,3);
			\draw (0.5,2)--(1,3);
			\node at (0.8,2.25) {$0$};
			
			\begin{scope}[shift={(-1,0)}]
				\draw (1,-1)--(1,-1.5);
				\node at (0.5,-1.3) {$\vdots$};
				\draw (1,0)--(1,-1);
				\draw (0,-1)--(1,-1);
				\draw (0,-1)--(1,0);
				\node at (0.75,-0.7) {$1$};
				\node at (0.25,-0.3) {$1$};
				\draw (0,0)--(1,0)--(1,1)--(0,1);
				\draw (0,0)--(0.5,1);
				\node at (0.65,0.4) {$2$};	
				\draw (1,1)--(1,2)--(0,2);
				\draw (0,1)--(1,2);
				\node at (0.75,1.3) {$1$};
				\draw (1,2)--(1,3)--(0,3);
				\draw (0.5,2)--(1,3);
				\node at (0.8,2.25) {$0$};
				
			\end{scope}
			\begin{scope}[shift={(-2,0)}]
				\draw (1,-1)--(1,-1.5);
				\node at (0.5,-1.3) {$\vdots$};
				\draw (1,0)--(1,-1);
				\draw (0,-1)--(1,-1);
				\draw (0,-1)--(1,0);
				\node at (0.75,-0.7) {$1$};
				\node at (0.25,-0.3) {$1$};
				\draw (0,0)--(1,0)--(1,1)--(0,1);
				\draw (0,0)--(0.5,1);
				\node at (0.65,0.4) {$2$};	
				\draw (1,1)--(1,2)--(0,2);
				\draw (0,1)--(1,2);
				\node at (0.75,1.3) {$1$};
				\draw (1,2)--(1,3)--(0,3);
				\draw (0.5,2)--(1,3);
				\node at (0.8,2.25) {$0$};
				
			\end{scope}
			\begin{scope}[shift={(-3,0)}]
				\draw (1,-1)--(1,-1.5);
				\node at (0.5,-1.3) {$\vdots$};
				\draw (1,0)--(1,-1);
				\draw (0,-1)--(1,-1);
				\draw (0,-1)--(1,0);
				\node at (0.75,-0.7) {$1$};
				\node at (0.25,-0.3) {$1$};
				\draw (0,0)--(1,0)--(1,1)--(0,1);
				\draw (0,0)--(0.5,1);
				\node at (0.65,0.4) {$2$};	
				\draw (1,1)--(1,2)--(0,2);
				\draw (0,1)--(1,2);
				\node at (0.75,1.3) {$1$};
				\draw (1,2)--(1,3)--(0,3);
				\draw (0.5,2)--(1,3);
				\node at (0.8,2.25) {$0$};
				
			\end{scope}
			\begin{scope}[shift={(-4,0)}]
				\draw (0,-1)--(0,-1.5);
				\draw (1,-1)--(1,-1.5);
				\node at (0.5,-1.3) {$\vdots$};
				\draw (0,0)--(0,-1);
				\draw (1,0)--(1,-1);
				\draw (0,-1)--(1,-1);
				\draw (0,-1)--(1,0);
				\node at (0.75,-0.7) {$1$};
				\node at (0.25,-0.3) {$1$};
				\draw (0,0)--(1,0)--(1,1)--(0,1)--(0,0);
				\draw (0,0)--(0.5,1);
				\node at (0.65,0.4) {$2$};	
				\draw (1,1)--(1,2)--(0,2)--(0,1);
				\draw (0,1)--(1,2);
				\node at (0.75,1.3) {$1$};
				\draw (1,2)--(1,3)--(0,3)--(0,2);
				\draw (0.5,2)--(1,3);
				\node at (0.8,2.25) {$0$};
				
				\draw (0,-1)--(-0.5,-1);
				\draw (0,0)--(-0.5,0);
				\draw (0,1)--(-0.5,1);
				\draw (0,2)--(-0.5,2);
				\draw (0,3)--(-0.5,3);
				\node at (-0.4,-0.5) {$\cdots$};
				\node at (-0.4,0.5) {$\cdots$};
				\node at (-0.4,1.5) {$\cdots$};
				\node at (-0.4,2.5) {$\cdots$};
			\end{scope}
			\end{scope}
			\begin{scope}[shift={(0,0)}]
	\node at (-1.6,-2.3) {\scriptsize{Ground-state wall $Y_{\Lambda_0}$ for $U_q(G_2^{(1)})$}};					
	\draw (0,0)--(0,-1.5);
\draw (1,0)--(1,-1.5);
\node at (0.5,-1.3) {$\vdots$};
\draw (0,-1)--(1,-1);\draw (0.5,0)--(0.5,-1); \draw (0,-0.5)--(0.5,-0.5);
\node at (0.25,-0.25) {$2$};\node at (0.25,-0.75) {$1$};\node at (0.75,-0.5) {$1$};	
\draw (0,0)--(1,0)--(1,1)--(0,1)--(0,0);
\draw (0,0)--(0.5,1);\draw (0,0)--(1,1);

\draw (1,1)--(1,2)--(0,2)--(0,1);
\draw (0.5,1.5)--(1,1.5);\draw (0.5,1)--(0.5,2);

\draw (1,2)--(1,3)--(0,3)--(0,2);
\draw (0.5,2)--(1,3);	\draw (0,2)--(1,3);

\node at (0.15,0.75) {$2$};	
\node at (0.7,0.3) {$2$};
\node at (0.25,1.5) {$1$};
\node at (0.48,2.25) {$0$};
\begin{scope}[shift={(-1,0)}]
	\draw (0,0)--(0,-1.5);
\draw (1,0)--(1,-1.5);
\node at (0.5,-1.3) {$\vdots$};
\draw (0,-1)--(1,-1);\draw (0.5,0)--(0.5,-1); \draw (0,-0.5)--(0.5,-0.5);
\node at (0.25,-0.25) {$2$};\node at (0.25,-0.75) {$1$};\node at (0.75,-0.5) {$1$};	
	
	\draw (1,0)--(0,0)--(0,1)--(1,1);
	\draw (0,0)--(0.5,1);\draw (0,0)--(1,1);
	
	\draw (0,1)--(0,2)--(1,2);
	\draw (0.5,1.5)--(1,1.5);\draw (0.5,1)--(0.5,2);
	
	\draw (0,2)--(0,3)--(1,3);
	\draw (0.5,2)--(1,3);	\draw (0,2)--(1,3);
	
	\node at (0.15,0.75) {$2$};	
	\node at (0.7,0.3) {$2$};
	\node at (0.25,1.5) {$1$};
	\node at (0.48,2.25) {$0$};
\end{scope}
\begin{scope}[shift={(-2,0)}]
	\draw (0,0)--(0,-1.5);
\draw (1,0)--(1,-1.5);
\node at (0.5,-1.3) {$\vdots$};
\draw (0,-1)--(1,-1);\draw (0.5,0)--(0.5,-1); \draw (0,-0.5)--(0.5,-0.5);
\node at (0.25,-0.25) {$2$};\node at (0.25,-0.75) {$1$};\node at (0.75,-0.5) {$1$};	
	
	\draw (1,0)--(0,0)--(0,1)--(1,1);
	\draw (0,0)--(0.5,1);\draw (0,0)--(1,1);
	
	\draw (0,1)--(0,2)--(1,2);
	\draw (0.5,1.5)--(1,1.5);\draw (0.5,1)--(0.5,2);
	
	\draw (0,2)--(0,3)--(1,3);
	\draw (0.5,2)--(1,3);	\draw (0,2)--(1,3);
	
	\node at (0.15,0.75) {$2$};	
	\node at (0.7,0.3) {$2$};
	\node at (0.25,1.5) {$1$};
	\node at (0.48,2.25) {$0$};
\end{scope}
\begin{scope}[shift={(-3,0)}]
	\draw (0,0)--(0,-1.5);
\draw (1,0)--(1,-1.5);
\node at (0.5,-1.3) {$\vdots$};
\draw (0,-1)--(1,-1);\draw (0.5,0)--(0.5,-1); \draw (0,-0.5)--(0.5,-0.5);
\node at (0.25,-0.25) {$2$};\node at (0.25,-0.75) {$1$};\node at (0.75,-0.5) {$1$};	
	
	\draw (1,0)--(0,0)--(0,1)--(1,1);
	\draw (0,0)--(0.5,1);\draw (0,0)--(1,1);
	
	\draw (0,1)--(0,2)--(1,2);
	\draw (0.5,1.5)--(1,1.5);\draw (0.5,1)--(0.5,2);
	
	\draw (0,2)--(0,3)--(1,3);
	\draw (0.5,2)--(1,3);	\draw (0,2)--(1,3);
	
	\node at (0.15,0.75) {$2$};	
	\node at (0.7,0.3) {$2$};
	\node at (0.25,1.5) {$1$};
	\node at (0.48,2.25) {$0$};
\end{scope}
\begin{scope}[shift={(-4,0)}]
	\draw (0,0)--(0,-1.5);
\draw (1,0)--(1,-1.5);
\node at (0.5,-1.3) {$\vdots$};
\draw (0,-1)--(1,-1);\draw (0.5,0)--(0.5,-1); \draw (0,-0.5)--(0.5,-0.5);
\node at (0.25,-0.25) {$2$};\node at (0.25,-0.75) {$1$};\node at (0.75,-0.5) {$1$};	
	\draw (0,-1)--(-0.5,-1);
	\draw (0,0)--(-0.5,0);
	\draw (0,1)--(-0.5,1);
	\draw (0,2)--(-0.5,2);
	\draw (0,3)--(-0.5,3);
	\node at (-0.4,-0.5){$\cdots$};
	\node at (-0.4,0.5){$\cdots$};
	\node at (-0.4,1.5){$\cdots$};
	\node at (-0.4,2.5){$\cdots$};

	\draw (1,0)--(0,0)--(0,1)--(1,1);
	\draw (0,0)--(0.5,1);\draw (0,0)--(1,1);
	
	\draw (0,1)--(0,2)--(1,2);
	\draw (0.5,1.5)--(1,1.5);\draw (0.5,1)--(0.5,2);
	
	\draw (0,2)--(0,3)--(1,3);
	\draw (0.5,2)--(1,3);	\draw (0,2)--(1,3);
	
	\node at (0.15,0.75) {$2$};	
	\node at (0.7,0.3) {$2$};
	\node at (0.25,1.5) {$1$};
	\node at (0.48,2.25) {$0$};
\end{scope}				
				\end{scope}
\begin{scope}[shift={(7,-1)}]
	\node at (-1.6,-1.3) {\scriptsize{Ground-state wall $Y_{\Lambda_2}$ for $U_q(G_2^{(1)})$}};		
	\draw (0,0)--(0,-0.5);
\draw (1,0)--(1,-0.5);
\node at (0.5,-0.3) {$\vdots$};
\draw (0,0)--(0,1)--(1,1)--(1,0)--(0,0); \draw (0,0)--(1,1);\draw (0,0)--(0.5,1);
\draw (0,1)--(0,2)--(1,2)--(1,1);\draw (0.5,1)--(0.5,2);
\draw (0,2)--(0,3)--(1,3)--(1,2);\draw (0,2)--(1,3);\draw (0.5,2)--(1,3);
\draw (0.5,1.5)--(1,1.5);\node at (0.75,1.25) {$1$};\node at (0.75,1.75) {$2$};
\node at (0.15,0.75) {$2$};\node at (0.53,0.75) {$0$};\node at (0.48,2.25) {$0$};
\node at (0.7,0.3) {$2$};\node at (0.3,2.7) {$2$};
\node at (0.25,1.5) {$1$};
\draw (0,3)--(0,4)--(1,4)--(1,3);\draw (0.5,3)--(0.5,4);\draw (0,3.5)--(0.5,3.5);
\node at (0.25,3.25) {$1$};\node at (0.25,3.75) {$2$};
\begin{scope}[shift={(-1,0)}]
	\draw (0,0)--(0,-0.5);
	\node at (0.5,-0.3) {$\vdots$};
	\draw (1,0)--(0,0)--(0,1)--(1,1); \draw (0,0)--(1,1);\draw (0,0)--(0.5,1);
	\draw (0,1)--(0,2)--(1,2);\draw (0.5,1)--(0.5,2);
	\draw (0,2)--(0,3)--(1,3);\draw (0,2)--(1,3);\draw (0.5,2)--(1,3);
	\draw (0.5,1.5)--(1,1.5);\node at (0.75,1.25) {$1$};\node at (0.75,1.75) {$2$};
	\node at (0.15,0.75) {$2$};\node at (0.53,0.75) {$0$};\node at (0.48,2.25) {$0$};
	\node at (0.7,0.3) {$2$};\node at (0.3,2.7) {$2$};
	\node at (0.25,1.5) {$1$};
	\draw (0,3)--(0,4)--(1,4);\draw (0.5,3)--(0.5,4);\draw (0,3.5)--(0.5,3.5);
	\node at (0.25,3.25) {$1$};\node at (0.25,3.75) {$2$};
\end{scope}
\begin{scope}[shift={(-2,0)}]
	\draw (0,0)--(0,-0.5);
	\node at (0.5,-0.3) {$\vdots$};
	\draw (1,0)--(0,0)--(0,1)--(1,1); \draw (0,0)--(1,1);\draw (0,0)--(0.5,1);
	\draw (0,1)--(0,2)--(1,2);\draw (0.5,1)--(0.5,2);
	\draw (0,2)--(0,3)--(1,3);\draw (0,2)--(1,3);\draw (0.5,2)--(1,3);
	\draw (0.5,1.5)--(1,1.5);\node at (0.75,1.25) {$1$};\node at (0.75,1.75) {$2$};
	\node at (0.15,0.75) {$2$};\node at (0.53,0.75) {$0$};\node at (0.48,2.25) {$0$};
	\node at (0.7,0.3) {$2$};\node at (0.3,2.7) {$2$};
	\node at (0.25,1.5) {$1$};
	\draw (0,3)--(0,4)--(1,4);\draw (0.5,3)--(0.5,4);\draw (0,3.5)--(0.5,3.5);
	\node at (0.25,3.25) {$1$};\node at (0.25,3.75) {$2$};
\end{scope}
\begin{scope}[shift={(-3,0)}]
	\draw (0,0)--(0,-0.5);
	\node at (0.5,-0.3) {$\vdots$};
	\draw (1,0)--(0,0)--(0,1)--(1,1); \draw (0,0)--(1,1);\draw (0,0)--(0.5,1);
	\draw (0,1)--(0,2)--(1,2);\draw (0.5,1)--(0.5,2);
	\draw (0,2)--(0,3)--(1,3);\draw (0,2)--(1,3);\draw (0.5,2)--(1,3);
	\draw (0.5,1.5)--(1,1.5);\node at (0.75,1.25) {$1$};\node at (0.75,1.75) {$2$};
	\node at (0.15,0.75) {$2$};\node at (0.53,0.75) {$0$};\node at (0.48,2.25) {$0$};
	\node at (0.7,0.3) {$2$};\node at (0.3,2.7) {$2$};
	\node at (0.25,1.5) {$1$};
	\draw (0,3)--(0,4)--(1,4);\draw (0.5,3)--(0.5,4);\draw (0,3.5)--(0.5,3.5);
	\node at (0.25,3.25) {$1$};\node at (0.25,3.75) {$2$};
\end{scope}
\begin{scope}[shift={(-4,0)}]
	\draw (0,0)--(0,-0.5);
	\node at (0.5,-0.3) {$\vdots$};
	\draw (1,0)--(0,0)--(0,1)--(1,1); \draw (0,0)--(1,1);\draw (0,0)--(0.5,1);
	\draw (0,1)--(0,2)--(1,2);\draw (0.5,1)--(0.5,2);
	\draw (0,2)--(0,3)--(1,3);\draw (0,2)--(1,3);\draw (0.5,2)--(1,3);
	\draw (0.5,1.5)--(1,1.5);\node at (0.75,1.25) {$1$};\node at (0.75,1.75) {$2$};
	\node at (0.15,0.75) {$2$};\node at (0.53,0.75) {$0$};\node at (0.48,2.25) {$0$};
	\node at (0.7,0.3) {$2$};\node at (0.3,2.7) {$2$};
	\node at (0.25,1.5) {$1$};
	\draw (0,3)--(0,4)--(1,4);\draw (0.5,3)--(0.5,4);\draw (0,3.5)--(0.5,3.5);
	\node at (0.25,3.25) {$1$};\node at (0.25,3.75) {$2$};				
	\draw (0,0)--(-0.5,0);
	\draw (0,1)--(-0.5,1);
	\draw (0,2)--(-0.5,2);
	\draw (0,3)--(-0.5,3);
	\draw (0,4)--(-0.5,4);
	\node at (-0.4,0.5){$\cdots$};
	\node at (-0.4,1.5){$\cdots$};
	\node at (-0.4,2.5){$\cdots$};
	\node at (-0.4,3.5){$\cdots$};
\end{scope}
\end{scope}			
		\end{tikzpicture}
	}
	\vskip -1pc
	\caption{The ground-state wall}\label{Fig:ground-state wall}
\end{figure}
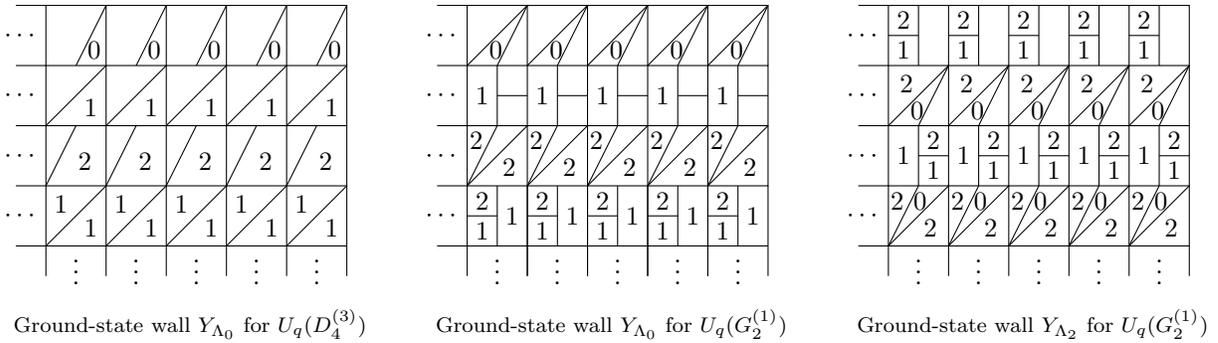 

The Young columns in the ground-state walls are called {\it ground-state columns}.

\begin{defn}\label{def:Young wall}
A wall built on the ground-state wall following the rules listed below is called a {\it Young wall }.
\begin{enumerate}
	\item The walls must be built on the top of the ground-state wall.
	\item The way of stacking blocks satisfies the wall pattern.
	\item Every column in the wall must be a Young column.
	\item In adjacent columns, for each block in the left column, there must be a block
	lying in the same position in the right column.
\end{enumerate}
	\end{defn}

\subsection{Energy function}

\begin{defn}
An \textit{energy function} on a crystal $B$ is a $\mathbb Z$-valued function $H:B\otimes B\to\mathbb Z$ satisfying the following conditions.
\begin{equation*}
	H(\tilde{f}_i(b_1\otimes b_2))=
	\begin{cases}
	H(b_1\otimes b_2), &\text{if } i\neq 0,\\
	H(b_1\otimes b_2)-1, &\text{if } i=0,\ \varphi_0(b_1)>\varepsilon_0(b_2),\\
	H(b_1\otimes b_2)+1, &\text{if } i=0,\ \varphi_0(b_1)\leq\varepsilon_0(b_2),
	\end{cases}
\end{equation*}	
for all $i\in I$, $b_1\otimes b_2\in B\otimes B$ with $\tilde{f}_i(b_1\otimes b_2)\in B\otimes B$.
	\end{defn}

Let $B^{\mathrm{aff}}:= \{b(n) \mid b \in B, n \in \Z \}$ be the {\it affinization} of $B$.
The crystal structure on $B^{\mathrm{aff}}$ is given as follows.
\begin{equation*}
\begin{aligned}
& \tilde{e}_{i} (b(n)) = (\tilde{e}_{i} b) (n), \ \ 
\tilde{f}_{i}(b(n)) = (\tilde{f}_{i} b)(n) \ \ \text{for} \ \ i \neq 0, \\ 
& \tilde{e}_{0} (b(n)) = (\tilde{e}_{i} b)(n-1), \ \  \tilde{f}_{0} (b(n)) = (\tilde{f}_{0} b)(n+1).
\end{aligned}
\end{equation*} 

\begin{defn}\label{affine energy function}
	The affine energy function $H^{\mathrm{aff}}:B^{\mathrm{aff}}\otimes B^{\mathrm{aff}}\to \mathbb Z$ is given as follows.
	\begin{equation*}
		H^{\mathrm{aff}}(x(m)\otimes y(n))=H(x\otimes y)+m-n,
	\end{equation*} 
	where $x(m),y(n)\in B^{\mathrm{aff}}$ and $m,n\in\mathbb Z$.
\end{defn}

\begin{defn}
	A \textit{combinatorical $R$-matrix} on $B^{\mathrm{aff}}$ is an endomorphism of affine crystals $R:B^{\mathrm{aff}}\otimes B^{\mathrm{aff}}\to B^{\mathrm{aff}}\otimes B^{\mathrm{aff}}$ such that
	\begin{equation}\label{rmatrix}
		\begin{aligned}
			& (T\otimes\mathrm{id})\circ R=R\circ(\mathrm{id}\otimes T),\\
			& (\mathrm{id}\otimes T)\circ R=R\circ(T\otimes \mathrm{id}),
		\end{aligned}
	\end{equation}
	where $T:B^{\mathrm{aff}}\to B^{\mathrm{aff}}$ is the shift operator given by
	\begin{equation*}
		T(x(m))=x(m+1).
	\end{equation*}
\end{defn}

\begin{Lem}\label{rmatrix R}
	We define a map $R:B^{\mathrm{aff}}\to B^{\mathrm{aff}}$ by	
	\begin{equation*}	
		R(x(m)\otimes y(n))=x(n-H(x\otimes y))\otimes y(m+H(x\otimes y))
	\end{equation*}
	for $x,y\in B$, $m,n\in\mathbb Z$. Then $R$ is a combinatorial $R$-matrix.
\end{Lem}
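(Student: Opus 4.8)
The plan is to verify the two defining properties of a combinatorial $R$-matrix separately: first the shift relations \eqref{rmatrix}, which are essentially formal, and then the assertion that $R$ is a morphism (in fact an automorphism) of affine crystals, which is where the energy function does the real work. Writing $H=H(x\otimes y)$ and noting that $T$ only increments a level index while $H$ depends on $x,y$ but not on $m,n$, the shift relations follow by direct substitution:
\begin{equation*}
(T\otimes\mathrm{id})\circ R\,(x(m)\otimes y(n))=x(n-H+1)\otimes y(m+H)=R\circ(\mathrm{id}\otimes T)\,(x(m)\otimes y(n)),
\end{equation*}
and symmetrically for the second identity in \eqref{rmatrix}. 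I would also record here that $R$ is an involution, hence a bijection: applying the formula twice returns $x(m)\otimes y(n)$, so $R^2=\mathrm{id}$ and $R^{-1}=R$.

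The substantive step is to show that $R$ commutes with the Kashiwara operators on $B^{\mathrm{aff}}\otimes B^{\mathrm{aff}}$. Since $R$ is a bijection and $\tilde{e}_i,\tilde{f}_i$ are mutually inverse by the crystal axioms, it suffices to check $R\circ\tilde{f}_i=\tilde{f}_i\circ R$ for every $i\in I$. I would use the standard affinization conventions $\ep_i(b(n))=\ep_i(b)$, $\varphi_i(b(n))=\varphi_i(b)$ and $\wt(b(n))=\wt(b)+n\delta$; because $\langle h_i,\delta\rangle=0$ and the level contributions satisfy $(n-H)+(m+H)=m+n$, these immediately give that $R$ preserves $\wt$, $\ep_i$ and $\varphi_i$ through the tensor product formulas, so the whole content lies in the $\tilde{f}_i$-equivariance. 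For $i\neq 0$ the tensor product rule selects the same factor before and after applying $R$, since the inequality $\varphi_i(x)>\ep_i(y)$ governing the choice is unchanged by the level shifts, and $H(\tilde{f}_i(x\otimes y))=H$ in this case; thus both $R\circ\tilde{f}_i$ and $\tilde{f}_i\circ R$ produce $(\tilde{f}_i x)(n-H)\otimes y(m+H)$ or $x(n-H)\otimes(\tilde{f}_i y)(m+H)$ according to which factor is acted upon.

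The case $i=0$ is the main obstacle and the only place where the $\pm1$ jumps in the energy function are needed, because affinization now shifts levels via $\tilde{f}_0(b(n))=(\tilde{f}_0b)(n+1)$ and these shifts must be absorbed by the compensating change in $H$. Concretely, when $\varphi_0(x)>\ep_0(y)$ the operator $\tilde{f}_0$ acts on the first factor and $H(\tilde{f}_0(x\otimes y))=H-1$, so the extra $+1$ from affinizing $x$ combines with $H'=H-1$ to give first level $n-(H-1)=n-H+1$, matching $\tilde{f}_0$ applied after $R$; when $\varphi_0(x)\le\ep_0(y)$ the operator acts on the second factor, $H(\tilde{f}_0(x\otimes y))=H+1$, and the analogous bookkeeping again forces agreement. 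I would lay these two subcases out as short displayed computations, tracking the level indices explicitly in each. Combining the $\tilde{f}_i$-equivariance for all $i$ with the preservation of $\wt,\ep_i,\varphi_i$ and the bijectivity from the first paragraph, $R$ is an isomorphism of affine crystals satisfying \eqref{rmatrix}, which is precisely the statement that $R$ is a combinatorial $R$-matrix.
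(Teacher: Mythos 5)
Your proposal is correct and follows essentially the same route as the paper: verify the shift relations \eqref{rmatrix} by direct substitution, then check $\tilde{f}_i$-equivariance, with the case $i=0$ split into the two subcases $\varphi_0(x)>\varepsilon_0(y)$ and $\varphi_0(x)\le\varepsilon_0(y)$, where the $\mp 1$ jumps of $H$ absorb the level shifts coming from affinization. Your only additions --- noting $R^2=\mathrm{id}$ and the preservation of $\wt$, $\varepsilon_i$, $\varphi_i$ --- merely tidy up the claim that $R$ is an endomorphism of affine crystals, which the paper leaves implicit.
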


\begin{proof}
	We first verify the relations in \eqref{rmatrix}.
	\begin{align*}
		&(T\otimes\mathrm{id})\circ R(x(m)\otimes y(n))\\
		=&(T\otimes\mathrm{id})(x(n-H(x\otimes y))\otimes y(m+H(x\otimes y)))\\
		=&T(x(n-H(x\otimes y)))\otimes y(m+H(x\otimes y))\\
		=&x(n+1-H(x\otimes y))\otimes y(m+H(x\otimes y)\\
		=&R(x(m)\otimes y(n+1))\\
		=&R(x(m)\otimes T(y(n)))\\
		=&R\circ(\mathrm{id}\otimes T)(x(m)\otimes y(n)).
	\end{align*}
	
	Similarly, we have
	\begin{align*}
		&(\mathrm{id}\otimes T)\circ R(x(m)\otimes y(n))\\
		=&(\mathrm{id}\otimes T)(x(n-H(x\otimes y))\otimes y(m+H(x\otimes y)))\\
		=&x(n-H(x\otimes y))\otimes T(y(m+H(x\otimes y)))\\
		=&x(n-H(x\otimes y))\otimes y(m+1+H(x\otimes y)\\
		=&R(x(m+1)\otimes y(n))\\
		=&R(T(x(m)\otimes y(n))\\
		=&R\circ(T\otimes \mathrm{id})(x(m)\otimes y(n)).
	\end{align*}
	
	Next, we shall show that $R$ commutes with the action of Kashiwara operators. 
	\begin{enumerate}
		\item[(1)] If $\varphi_0(x)>\varepsilon_0(y)$, we have
		\begin{align*}
			&R(\tilde{f}_0(x(m)\otimes y(n)))\\
			=&R((\tilde{f}_0x(m))\otimes y(n))\\
			=&R((\tilde{f}_0x)(m+1)\otimes y(n))\\
			=&(\tilde{f}_0x)(n-H(\tilde{f}_0x\otimes y))\otimes y(m+1+H(\tilde{f}_0x\otimes y))\\
			=&(\tilde{f}_0x)(n+1-H(x\otimes y))\otimes y(m+H(x\otimes y))\\
			=&\tilde{f}_0(x(n-H(x\otimes y)))\otimes y(m+H(x\otimes y))\\
			=&\tilde{f}_0(R(x(m)\otimes y(n))).
		\end{align*}
		
		\item[(2)] If $\varphi_0(x)\leq\varepsilon_0(y)$, we have
		\begin{align*}
			&R(\tilde{f}_0(x(m)\otimes y(n)))\\
			=&R(x(m)\otimes \tilde{f}_0y(n))\\
			=&R(x(m)\otimes (\tilde{f}_0y)(n+1))\\
			=&x(n+1-H(x\otimes\tilde{f}_0 y))\otimes (\tilde{f}_0y)(m+H(x\otimes\tilde{f}_0 y))\\
			=&x(n-H(x\otimes y))\otimes (\tilde{f}_0y)(m+1+H(x\otimes y))\\
			=&x(n-H(x\otimes y))\otimes\tilde{f}_0(y(m+H(x\otimes y)))\\
			=&\tilde{f}_0(R(x(m)\otimes y(n))).
		\end{align*}	
	\end{enumerate}
	
	Similarly, one can prove
	\begin{equation*}
		R(\tilde{f}_i(x(m)\otimes y(n)))=\tilde{f}_i(R(x(m)\otimes y(n)))
	\end{equation*}
	for $i\neq 0$, which completes the proof.
\end{proof}	

\begin{Lem}\label{constant}
	The affine energy function $H^{\mathrm{aff}}$ is constant on each connected component
	of $B^{\mathrm{aff}}\otimes B^{\mathrm{aff}}$.
\end{Lem}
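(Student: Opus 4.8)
The plan is to reduce the statement to the invariance of $H^{\mathrm{aff}}$ under the Kashiwara operators. A connected component of $B^{\mathrm{aff}}\otimes B^{\mathrm{aff}}$ is, by definition, the orbit of any of its elements under the operators $\tilde e_i,\tilde f_i$ ($i\in I$), and $\tilde e_i$ is the partial inverse of $\tilde f_i$. Hence it suffices to verify that
\[
H^{\mathrm{aff}}\bigl(\tilde f_i(x(m)\otimes y(n))\bigr)=H^{\mathrm{aff}}(x(m)\otimes y(n))
\]
whenever $\tilde f_i(x(m)\otimes y(n))\neq 0$. First I would record the elementary fact that the grading shift in the affinization does not affect $\varepsilon_i$ or $\varphi_i$: since $\wt(b(n))=\wt(b)+n\delta$ and $\langle h_i,\delta\rangle=0$, we have $\varepsilon_i(b(n))=\varepsilon_i(b)$ and $\varphi_i(b(n))=\varphi_i(b)$. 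Consequently the tensor product rule applied in $B^{\mathrm{aff}}\otimes B^{\mathrm{aff}}$ picks the same tensor factor as the corresponding rule in $B\otimes B$, as both are governed by comparing $\varphi_i(x)$ with $\varepsilon_i(y)$.

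Next I would split into cases. For $i\neq 0$ the affine action leaves the gradings $m,n$ untouched, and the energy-function axiom gives $H(\tilde f_i(x\otimes y))=H(x\otimes y)$; combined with $H^{\mathrm{aff}}(x(m)\otimes y(n))=H(x\otimes y)+m-n$, invariance is immediate. The crux is $i=0$, where the grading is shifted. If $\varphi_0(x)>\varepsilon_0(y)$, then $\tilde f_0(x(m)\otimes y(n))=(\tilde f_0 x)(m+1)\otimes y(n)$ while simultaneously $H(\tilde f_0 x\otimes y)=H(x\otimes y)-1$, so the $+1$ in the grading cancels the $-1$ in the energy:
\[
H^{\mathrm{aff}}\bigl((\tilde f_0 x)(m+1)\otimes y(n)\bigr)=\bigl(H(x\otimes y)-1\bigr)+(m+1)-n=H(x\otimes y)+m-n.
\]
The symmetric computation in the subcase $\varphi_0(x)\le\varepsilon_0(y)$, where $\tilde f_0$ acts on the second factor shifting $n\mapsto n+1$ while $H$ increases by $1$, cancels in the same way.

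I expect no genuine obstacle: the content of the lemma is precisely that $H^{\mathrm{aff}}$ is engineered so that the grading shifts of the affine $i=0$ action compensate the $\pm1$ jumps of $H$ prescribed in the energy-function axiom. The only points demanding care are (i) confirming that the same branch of the tensor product rule is taken before and after the grading shift, which follows from the invariance of $\varepsilon_0,\varphi_0$ noted above, and (ii) treating the boundary $\varphi_0(x)=\varepsilon_0(y)$ consistently with the $\le$ convention appearing in both the tensor product rule and the energy-function axiom. Once these four cases are checked, invariance under every $\tilde f_i$ (and hence every $\tilde e_i$) yields constancy of $H^{\mathrm{aff}}$ on each connected component.
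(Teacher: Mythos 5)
Your proof is correct, but it takes a genuinely different route from the paper's. The paper derives Lemma~\ref{constant} from the combinatorial $R$-matrix of Lemma~\ref{rmatrix R}: it rewrites $R(x(m)\otimes y(n))$ as $\bigl(T^{-H^{\mathrm{aff}}(x(m)\otimes y(n))}\otimes T^{H^{\mathrm{aff}}(x(m)\otimes y(n))}\bigr)(x(m)\otimes y(n))$, applies the same identity to $\tilde{f}_i(x(m)\otimes y(n))$, and then uses the commutation $R\circ\tilde{f}_i=\tilde{f}_i\circ R$ to force the two shift exponents --- hence the two values of $H^{\mathrm{aff}}$ --- to coincide. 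You bypass the $R$-matrix entirely and verify the invariance of $H^{\mathrm{aff}}$ under each $\tilde{f}_i$ directly from the energy-function axiom and the grading shifts in the affinization; the case analysis you perform ($i\neq 0$; then $i=0$ with $\varphi_0(x)>\varepsilon_0(y)$, where the grading gains $+1$ while $H$ drops by $1$; then $i=0$ with $\varphi_0(x)\le\varepsilon_0(y)$, where the opposite cancellation occurs) is exactly the computation the paper carries out, but inside the proof of Lemma~\ref{rmatrix R} rather than inside the proof of Lemma~\ref{constant}. What your route buys is self-containedness and transparency: it exhibits $H^{\mathrm{aff}}$ as engineered precisely so that the affine $0$-action's grading shifts compensate the $\pm 1$ jumps of $H$, and your point (i) --- that $\varepsilon_i,\varphi_i$ are insensitive to the grading, so the same branch of the tensor product rule is selected before and after shifting --- makes explicit a fact the paper uses silently in both of its lemmas. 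What the paper's route buys is economy within its own development: Lemma~\ref{rmatrix R} is needed anyway, and once $R$ is known to commute with the Kashiwara operators and to act by the shift pair $(T^{-H^{\mathrm{aff}}},T^{H^{\mathrm{aff}}})$, constancy of $H^{\mathrm{aff}}$ on connected components follows formally with no further computation, and the invariance under $\tilde{e}_i$ is obtained by the same one-line argument you give via partial inverses.
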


\begin{proof}
By the definition of affine energy function, combinatorial $R$-matrix and Lemma \ref{rmatrix R}, 
we have
	\begin{align*}
		&R(x(m)\otimes y(n))\\
		=&x(n-H(x\otimes y))\otimes y(m+H(x\otimes y))\\
		=&x(m-H^{\mathrm{aff}}(x(m)\otimes y(n)))\otimes y(n+H^{\mathrm{aff}}(x(m)\otimes y(n)))\\
		=&T^{-H^{\mathrm{aff}}(x(m)\otimes y(n))} x(m)\otimes T^{H^{\mathrm{aff}}(x(m)\otimes y(n))}y(n)\\
		=&(T^{-H^{\mathrm{aff}}(x(m)\otimes y(n))}\otimes  T^{H^{\mathrm{aff}}(x(m)\otimes y(n))})(x(m)\otimes y(n)).
	\end{align*}

	We replace $x(m)\otimes y(n)$ by $\tilde{f}_i(x(m)\otimes y(n))$, and then we get
	\begin{align*}
		R(\tilde{f}_i(x(m)\otimes y(n)))=(T^{-H^{\mathrm{aff}}(\tilde{f}_i(x(m)\otimes y(n)))}\otimes  T^{H^{\mathrm{aff}}(\tilde{f}_i(x(m)\otimes y(n)))})(\tilde{f}_i(x(m)\otimes y(n))).
	\end{align*}
	
	We also have
	\begin{align*}
		\tilde{f}_i(R(x(m)\otimes y(n)))=(T^{-H^{\mathrm{aff}}(x(m)\otimes y(n))}\otimes  T^{H^{\mathrm{aff}}(x(m)\otimes y(n))})(\tilde{f}_i(x(m)\otimes y(n))).
	\end{align*}
	
	Since $R\circ\tilde{f}_i=\tilde{f}_i\circ R$, we have
	\begin{align*}
		H^{\mathrm{aff}}(\tilde{f}_i(x(m)\otimes y(n)))=H^{\mathrm{aff}}(x(m)\otimes y(n)).
	\end{align*}
	
	Similarly, one can show that
	\begin{align*}
		H^{\mathrm{aff}}(\tilde{e}_i(x(m)\otimes y(n)))=H^{\mathrm{aff}}(x(m)\otimes y(n)).
	\end{align*}
	
	This completes the proof.
\end{proof}

We now consider the energy functions $H:C_1\otimes C_1\to\mathbb Z$ and $H':C'_1\otimes C'_1\to\mathbb Z$.

For the case of $U_q(D_4^{(3)})$, we set
\begin{align*}
&A_0=\{c_0,c_1,c_2,c_3\},\quad \bar{A}_0=\{c_0,c_{\bar{1}},c_{\bar{2}},c_{\bar{3}}\},\quad A_1=C_1\setminus\{c_\phi\},\\
&A_2=\{c_2,c_3,c_0,c_{\bar{3}},c_{\bar{2}}\},\quad A_3=\{c_3,c_0,c_{\bar{3}}\},
\end{align*}
and
\begin{align*}
&\mathbf A=\{(c_\phi,c_\phi), (c_2,c_{\bar{2}})\}\cup A_0\times \{c_{\bar{1}}\}\cup \{c_1\}\times \bar{A}_0,\\
&\mathbf B=\cup_{i=1}^3(A_i\times \{c_i\}\cup \{c_{\bar{i}}\}\times A_i).
\end{align*}

For any $(x,y)\in C_1\times C_1$, we have 
\begin{equation} \label{value of energy function D43}
	H(x\otimes y)=\begin{cases}
		0,& \text{if}\ (x,y)\in \mathbf A,\\
		2,& \text{if}\ (x,y)\in \mathbf B,\\
		1,& \text{otherwise}.
	\end{cases}
\end{equation}

We list the values of energy function $H$ as follows.

\begin{equation}\label{table D43}
	\begin{tabular}{|c|c|c|c|c|c|c|c|c|}
		\hline \multicolumn{9}{|c|}{$H(c_i\otimes c_j)$}\\	
		\hline
		\diagbox[width=3em]{$i$}{$j$} & $\phi$ & 1 & 2 & 3 & 0 & $\bar{3}$ & $\bar{2}$ & $\bar{1}$ \\
		\hline
		$\phi$ &  0 &  1 &  1 &  1 & 1 & 1 & 1 & 1\\
		\hline	1 &  1 &  2 &  1 &  1 & 0 & 0 & 0 & 0\\
		\hline	2 &  1 &  2 &  2 &  1 & 1 & 1 & 0 & 0\\
		\hline	3 &  1 &  2 &  2 &  2 & 1 & 1 & 1 & 0\\
		\hline	0 &  1 &  2 &  2 &  2 & 1 & 1 & 1 & 0\\
		\hline	$\bar{3}$ &  1 &  2 &  2 &  2 & 2 & 2 & 1 & 1\\
		\hline	$\bar{2}$ &  1 &  2 &  2 &  2 & 2 & 2 & 2 & 1\\
		\hline	$\bar{1}$ &  1 &  2 &  2 &  2 & 2 & 2 & 2 & 2\\
		\hline
	\end{tabular}
\end{equation}

For the case of $U_q(G_2^{(1)})$, we set
\begin{align*}
D_0&=\{c'_1,c'_2,c'_3,c'_4,c'_5,c'_{\bar{7}}\},  &\bar{D}_0&=\{c'_{\bar{1}},c'_{\bar{2}},c'_{\bar{3}},c'_{\bar{4}},c'_{\bar{5}},c'_{\bar{7}}\},
&D'_0&=\{c'_5,c'_7,c'_{\bar{3}}\},  &\bar{D}'_0&=\{c'_3,c'_7,c'_{\bar{5}}\},\\
D_1&=C'_1\setminus \{c'_0\},  &\bar{D}_1&=D_1,
&D_2&=C'_1\setminus \{c'_0,c'_1,c'_{\bar{1}}\},  &\bar{D}_2&=D_2,\\
D_3&=\{c'_{\bar{5}},c'_6,c'_7,c'_{\bar{6}},c'_{\bar{4}},c'_{\bar{3}}\},  &\bar{D}_3&=\{c'_{\bar{6}},c'_5,c'_7,c'_6,c'_4,c'_3\},
&D_4&=D_3\setminus \{c'_{\bar{3}}\},  &\bar{D}_4&=\bar{D}_3\setminus \{c'_3\},\\
D_5&=\{c'_{\bar{6}}\},  &\bar{D}_5&=\{c'_6\},
&D_6&=\{c'_6,c'_7,c'_{\bar{6}}\},  &\bar{D}_6&=D_6,
\end{align*}
and
\begin{align*}
&\mathbf X=\{(c'_0,c'_0)\}\cup D_0\times \bar{D}_0\cup \{c'_1\}\times D'_0\cup \bar{D}'_0\times \{c'_{\bar{1}}\},\\
&\mathbf Y=\cup_{i=1}^6(D_i\times \{c'_i\}\cup \{c'_{\bar{i}}\}\times \bar{D}_i).
\end{align*}

For any $(x,y)\in C'_1\times C'_1$, we have 
\begin{equation} \label{value of energy function G21}
H'(x\otimes y)=\begin{cases}
0,& \text{if}\ (x,y)\in \mathbf X,\\
2,& \text{if}\ (x,y)\in \mathbf Y,\\
1,& \text{otherwise}.
\end{cases}
\end{equation}

We list the values of the energy function $H'$ as follows. 

\vskip -2mm

\begin{equation}\label{table G21}
\begin{tabular}{|c|c|c|c|c|c|c|c|c|c|c|c|c|c|c|c|}
\hline \multicolumn{16}{|c|}{$H'(c'_i\otimes c'_j)$}\\	
 \hline
\diagbox[width=3em]{$i$}{$j$} & 0 & 1 & 2 & 3 & 4 & 5 & 6 & 7 & $\bar{7}$ & $\bar{6}$ & $\bar{5}$ & $\bar{4}$ & $\bar{3}$ & $\bar{2}$ & $\bar{1}$ \\
\hline	$0$ &  0 &  1 &  1 &  1 & 1 & 1 & 1 & 1 &  1 &  1 & 1 & 1 & 1 & 1 & 1\\
\hline	1 &  1 &  2 &  1 &  1 & 1 & 0 & 1 & 0 &  0 &  0 & 0 & 0 & 0 & 0 & 0\\
\hline	2 &  1 &  2 &  2 &  1 & 1 & 1 & 1 & 1 &  0 &  1 & 0 & 0 & 0 & 0 & 0\\
\hline	3 &  1 &  2 &  2 &  1 & 1 & 1 & 1 & 1 &  0 &  1 & 0 & 0 & 0 & 0 & 0\\
\hline	4 &  1 &  2 &  2 &  1 & 1 & 1 & 1 & 1 &  0 &  1 & 0 & 0 & 0 & 0 & 0\\
\hline	5 &  1 &  2 &  2 &  1 & 1 & 1 & 1 & 1 &  0 &  1 & 0 & 0 & 0 & 0 & 0\\
\hline	6 &  1 &  2 &  2 &  2 & 2 & 1 & 2 & 1 &  1 &  1 & 1 & 1 & 1 & 1 & 0\\
\hline	7 &  1 &  2 &  2 &  2 & 2 & 1 & 2 & 1 &  1 &  1 & 1 & 1 & 1 & 1 & 0\\
\hline	$\bar{7}$ & 1 &  2 &  2 &  1 & 1 & 1 & 1 & 1 &  0 &  1 & 0 & 0 & 0 & 0 & 0\\
\hline	$\bar{6}$ &  1 &  2 &  2 &  2 & 2 & 2 & 2 & 2 &  1 &  2 & 1 & 1 & 1 & 1 & 1\\
\hline	$\bar{5}$ &  1 &  2 &  2 &  2 & 2 & 1 & 2 & 1 &  1 &  1 & 1 & 1 & 1 & 1 & 0\\
\hline	$\bar{4}$ &  1 &  2 &  2 &  2 & 2 & 2 & 2 & 2 &  1 &  2 & 1 & 1 & 1 & 1 & 1\\
\hline	$\bar{3}$ &  1 &  2 &  2 &  2 & 2 & 2 & 2 & 2 &  1 &  2 & 1 & 1 & 1 & 1 & 1\\
\hline	$\bar{2}$ &  1 &  2 &  2 &  2 & 2 & 2 & 2 & 2 &  2 &  2 & 2 & 2 & 2 & 2 & 1\\
\hline	$\bar{1}$ &  1 &  2 &  2 &  2 & 2 & 2 & 2 & 2 &  2 &  2 & 2 & 2 & 2 & 2 & 2\\
\hline
\end{tabular}
\end{equation}

\subsection{Reduced Young walls}

We usually use the sequence $(\cdots,y_{i+1},y_i,\cdots,y_1,y_0)$ to represent a Young wall, where $y_i$ $(i\geq 0)$ is a Young column. Let $|y_i|$ denote  the number of blocks in $y_i$ that have been added to
the ground-state column. Let ${|y_i|}_0$ denote the number of $0$-blocks in $y_i$ that have been added to
the ground-state column. 

Let $C_1^{\mathrm{aff}}$ (resp. ${C'_1}^{\mathrm{aff}}$) be the affinization of $C_1$ (resp. $C'_1$). Let $\mathrm{adj}(Y)$ denote the set of all adjacent columns in a Young wall $Y$.

\begin{defn}\label{def:reduced}
	\begin{enumerate}
		\item We say that a Young wall $Y$ is \textit{reduced} if $$H^{\mathrm{aff}}(y_{i+1}({|y_{i+1}|}_0)\otimes y_i({|y_i|}_0
		))=0$$ for any adjacent columns $(y_{i+1},y_i)\in\mathrm{adj}(Y)$ and $y_i({|y_i|}_0),\ y_{i+1}({|y_{i+1}|}_0)\in C_1^{\mathrm{aff}}\ \text{or}\  {C'_1}^{\mathrm{aff}}$.
		\item The adjacent columns in a reduced Young wall are called \textit{reduced adjacent columns}. 
	\end{enumerate}
\end{defn}

\begin{Prop}\label{adjacent column}
\begin{enumerate}
	\item[{\rm (1)}] Let $(y_{i+1},y_i)$ be reduced adjacent columns. Then we have	
\begin{equation*}
{|y_i|}_0
=\begin{cases}
		{|y_{i+1}|}_0,& \text{if}\ (y_{i+1},y_i)\in \mathbf A\ (resp.\ \mathbf X),\\
		2+{|y_{i+1}|}_0,& \text{if}\ (y_{i+1},y_i)\in \mathbf B\ (resp.\ \mathbf Y),\\
		1+{|y_{i+1}|}_0,& \text{otherwise}.
	\end{cases}
\end{equation*}	
	\item[{\rm (2)}] If the adjacent Young columns $(y_{i+1},y_i)$ are reduced adjacent columns, then the number $|y_{i}|-|y_{i+1}|$ is a fixed non-negative integer.
\end{enumerate}	
\end{Prop}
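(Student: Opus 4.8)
The plan is to handle the two parts separately. Part~(1) is a direct unwinding of the reducedness condition in Definition~\ref{def:reduced} together with the tabulated energy values \eqref{value of energy function D43} and \eqref{value of energy function G21}, while part~(2) combines part~(1) with the periodicity of the column pattern and the stacking rule~(4) of Definition~\ref{def:Young wall}.

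For part~(1), fix reduced adjacent columns $(y_{i+1},y_i)$ and let $x,y$ denote the elements of $C_1$ (resp. $C'_1$) obtained by stripping the affine shift, so that $H(y_{i+1}\otimes y_i)=H(x\otimes y)$. By Definition~\ref{affine energy function}, the condition $H^{\mathrm{aff}}(y_{i+1}({|y_{i+1}|}_0)\otimes y_i({|y_i|}_0))=0$ reads
\begin{equation*}
H(y_{i+1}\otimes y_i)+{|y_{i+1}|}_0-{|y_i|}_0=0,
\end{equation*}
so that ${|y_i|}_0-{|y_{i+1}|}_0=H(y_{i+1}\otimes y_i)$. Reading off $H$ (resp. $H'$) from \eqref{value of energy function D43} (resp. \eqref{value of energy function G21}) yields the three cases: the difference equals $0$ on $\mathbf A$ (resp. $\mathbf X$), equals $2$ on $\mathbf B$ (resp. $\mathbf Y$), and equals $1$ otherwise. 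This is exactly the claimed formula.

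For part~(2), non-negativity is immediate from rule~(4) of Definition~\ref{def:Young wall}: every block of the left column $y_{i+1}$ lies in the same position as a block of the right column $y_i$, and both are counted against the same ground-state column, so $|y_i|\ge|y_{i+1}|$. For the \emph{fixed} assertion I would exploit that the column pattern is vertically periodic, each period contributing a fixed number of blocks and of $0$-blocks (eight blocks and two $0$-blocks for $U_q(D_4^{(3)})$, twelve blocks and two $0$-blocks for $U_q(G_2^{(1)})$). Hence, once the equivalence class of a column is fixed, the whole column, and in particular its block count, is determined by ${|y|}_0$ through an affine relation
\begin{equation*}
|y|=s\,{|y|}_0+\gamma,
\end{equation*}
in which the slope $s$ is the period ratio and is the same for every class, while the intercept $\gamma$ depends only on the class. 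Applying this to $y_i$ and $y_{i+1}$ and subtracting gives
\begin{equation*}
|y_i|-|y_{i+1}|=s\bigl({|y_i|}_0-{|y_{i+1}|}_0\bigr)+\bigl(\gamma_{y_i}-\gamma_{y_{i+1}}\bigr),
\end{equation*}
whose first summand is fixed by part~(1) and whose second summand depends only on the two classes; thus $|y_i|-|y_{i+1}|$ depends only on the pair of classes and is a fixed non-negative integer.

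The genuinely routine part is the bookkeeping behind the affine relation: for each of the eight classes $c_\bullet$ (resp. fifteen classes $c'_\bullet$) one records the intercept $\gamma$ and checks that the common slope $s$ is class-independent, a finite inspection using Figures~\ref{all possible Young columns D43} and \ref{all possible Young columns G21}. The one point I would take care to pin down is the identification running through the whole argument: that a column $y_i$ of a Young wall, after its affine shift is stripped, is exactly the element of $C_1$ (resp. $C'_1$) by which $\mathbf A,\mathbf B$ (resp. $\mathbf X,\mathbf Y$) and the tables \eqref{table D43}, \eqref{table G21} are indexed, and that its affinization index is precisely ${|y_i|}_0$; this is what turns reducedness into a relation among block counts and is where a sign or off-by-one slip would do the most damage.
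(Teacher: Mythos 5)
Your proposal is correct and follows essentially the same route as the paper: part (1) is the same unwinding of Definition \ref{def:reduced} via $H^{\mathrm{aff}}$ and the tables \eqref{value of energy function D43}, \eqref{value of energy function G21}, and part (2) rests on the same two ingredients the paper uses, namely that reducedness pins down ${|y_i|}_0-{|y_{i+1}|}_0$ and that the periodicity of the column pattern (two $0$-blocks per cycle) converts this into control of total block counts. Your explicit affine relation $|y|=s\,{|y|}_0+\gamma$ and your appeal to stacking rule (4) of Definition \ref{def:Young wall} for non-negativity simply make precise what the paper disposes of by inspection of Figures \ref{column pattern}, \ref{wall pattern}, \ref{all possible Young columns D43} and \ref{all possible Young columns G21}.
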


\begin{proof}
By the values of energy function in \eqref{value of energy function D43}, \eqref{value of energy function G21} and the definition of reduced Young wall in Definition \ref{def:reduced}, the conclusion (1) holds naturally. 

For (2), Let $z_i$ (resp. $z_{i+1}$) represent the same Young column as $y_i$ (resp. $y_{i+1}$). We assume that $(z_{i+1},z_i)$ are another reduced adjacent columns. We have
\begin{equation*}
	H^{\mathrm{aff}}(y_{i+1}({|y_{i+1}|}_0)\otimes y_{i}({|y_i|}_0))=H^{\mathrm{aff}}(z_{i+1}({|z_{i+1}|}_0)\otimes z_{i}({|z_i|}_0))=0.
\end{equation*}

Thus we have 
\begin{equation*}
	H(y_{i+1}\otimes y_{i})+{|y_{i+1}|}_0-{|y_i|}_0=H(z_{i+1}\otimes z_{i})+{|z_{i+1}|}_0-{|z_i|}_0=0.
\end{equation*}

Since $H(y_{i+1}\otimes y_{i})=H(z_{i+1}\otimes z_{i})$, we have
\begin{equation}\label{difference of zero blocks}
	{|y_i|}_0-{|y_{i+1}|}_0={|z_i|}_0-{|z_{i+1}|}_0.
\end{equation}

By the observation of column patterns in Figure \ref{column pattern}, we can see that there are always two $0$-blocks in a cycle. Combining the wall pattern in Figure \ref{wall pattern}, the description of all possible Young columns in Figure \ref{all possible Young columns D43}, Figure \ref{all possible Young columns G21} and the formula \eqref{difference of zero blocks}, we have $$|y_{i}|-|y_{i+1}|=|z_{i}|-|z_{i+1}|\geq 0.$$
Hence, our assertion holds.
\end{proof}

\begin{Rmk}
By Proposition \ref{adjacent column} and all possible Young columns listed in Figure \ref{all possible Young columns D43} and Figure \ref{all possible Young columns G21}, it is easy to see that there are 64 cases of reduced adjacent columns for $U_q(D_4^{(3)})$ and 225 cases of reduced adjacent columns for $U_q(G_2^{(1)})$. By the values in table \eqref{table D43} (resp. \eqref{table G21}), we can specifically write any reduced adjacent columns for $U_q(D_4^{(3)})$ (resp.  $U_q(G_2^{(1)})$).
\end{Rmk}

\begin{Ex}
For the case of $U_q(D_4^{(3)})$, in the picture below, the Young walls  (a) and (b) are reduced, but the Young walls (c) and (d) are not reduced.

		\begin{center}
			\begin{tikzpicture}[scale=0.7] 	
				\begin{scope}[shift={(0,0)}]
					\draw (0,0)--(0,-0.5);
				\draw (1,0)--(1,-0.5);
				\node at (0.5,-0.3) {$\vdots$};	
					\draw (0,0)--(1,0)--(1,1)--(0,1)--(0,0);
					\draw (0,0)--(0.5,1);
					\node at (0.2,0.75) {$0$};
					
					\draw (1,1)--(1,2)--(0,2)--(0,1);
					\draw (0,1)--(1,2);
					\node at (0.75,1.3) {$1$};
					\node at (0.25,1.7) {$1$};
					\draw (1,2)--(1,3)--(0,3)--(0,2);
					\draw (0.5,2)--(1,3);
					\node at (0.8,2.25) {$0$};
					\node at (0.35,2.6) {$2$};
					\draw (1,3)--(1,4);
					\draw (0,4)--(0,3);
					\draw (0,3)--(1,4);
					\node at (0.75,3.3) {$1$};
					\node at (0.25,3.7) {$1$};
					
					\node at (0.65,0.4) {$2$};
				\end{scope}
				\begin{scope}[shift={(0,4)}]
					\draw (0,0)--(1,0)--(1,1)--(0,1)--(0,0);
					\draw (0,0)--(0.5,1);
					\node at (0.2,0.75) {$0$};
					
					\draw (1,1)--(1,2)--(0,2)--(0,1);
					\draw (0,1)--(1,2);
					\node at (0.75,1.3) {$1$};
					\node at (0.25,1.7) {$1$};
					\draw (1,2)--(1,3)--(0,3)--(0,2);
					\draw (0.5,2)--(1,3);
					\node at (0.8,2.25) {$0$};
					\node at (0.35,2.6) {$2$};
					\draw (1,3)--(1,4);
					\draw (0,4)--(0,3);
					\draw (0,3)--(1,4);
					\node at (0.75,3.3) {$1$};
					\node at (0.25,3.7) {$1$};
					
					\node at (0.65,0.4) {$2$};
				\end{scope}
				\begin{scope}[shift={(0,8)}]
					\draw (0,0)--(1,0)--(1,1)--(0,1)--(0,0);
					\draw (0,0)--(0.5,1);
					\node at (0.2,0.75) {$0$};
					
					\draw (1,1)--(1,2)--(0,2)--(0,1);
					\draw (0,1)--(1,2);
					\node at (0.75,1.3) {$1$};
					\node at (0.25,1.7) {$1$};
					\draw (1,2)--(1,3)--(0,3)--(0,2);
					\draw (0.5,2)--(1,3);
					\node at (0.8,2.25) {$0$};
					\node at (0.35,2.6) {$2$};
					\draw (1,3)--(1,4)--(0,4)--(0,3);
					\draw (0,3)--(1,4);
					\node at (0.75,3.3) {$1$};
					\node at (0.25,3.7) {$1$};
					
					\node at (0.65,0.4) {$2$};
				\end{scope}
				\begin{scope}[shift={(-1,0)}]
						\draw (0,0)--(0,-0.5);
					\draw (1,0)--(1,-0.5);
					\node at (0.5,-0.3) {$\vdots$};
					\draw (0,0)--(1,0)--(1,1)--(0,1);
					\draw (0,0)--(0.5,1);
					\node at (0.2,0.75) {$0$};
					
					\draw (1,1)--(1,2)--(0,2);
					\draw (0,1)--(1,2);
					\node at (0.75,1.3) {$1$};
					\node at (0.25,1.7) {$1$};
					\draw (1,2)--(1,3)--(0,3);
					\draw (0.5,2)--(1,3);
					\node at (0.8,2.25) {$0$};
					\node at (0.35,2.6) {$2$};
					\draw (1,3)--(1,4);
					
					\draw (0,3)--(1,4);
					\node at (0.75,3.3) {$1$};
					\node at (0.25,3.7) {$1$};
					
					\node at (0.65,0.4) {$2$};
				\end{scope}
				\begin{scope}[shift={(-1,4)}]
					\draw (0,0)--(1,0)--(1,1)--(0,1)--(0,0);
					\draw (0,0)--(0.5,1);
					\node at (0.2,0.75) {$0$};
					
					\draw (1,1)--(1,2)--(0,2)--(0,1);
					\draw (0,1)--(1,2);
					\node at (0.75,1.3) {$1$};
					\node at (0.25,1.7) {$1$};
					\draw (1,2)--(1,3)--(0,3)--(0,2);
					\draw (0.5,2)--(1,3);
					\node at (0.8,2.25) {$0$};
					\node at (0.35,2.6) {$2$};
					\draw (1,3)--(1,4);
					\draw (0,4)--(0,3);
					\draw (0,4)--(1,4);
					\draw (0,3)--(1,4);
					\node at (0.75,3.3) {$1$};
					\node at (0.25,3.7) {$1$};
					
					\node at (0.65,0.4) {$2$};
				\end{scope}
				\begin{scope}[shift={(-2,0)}]
						\draw (0,0)--(0,-0.5);
					\draw (1,0)--(1,-0.5);
					\node at (0.5,-0.3) {$\vdots$};
					\draw (0,0)--(1,0)--(1,1)--(0,1);
					\draw (0,0)--(0.5,1);
					\node at (0.2,0.75) {$0$};
					
					\draw (1,1)--(1,2)--(0,2);
					\draw (0,1)--(1,2);
					\node at (0.75,1.3) {$1$};
					\node at (0.25,1.7) {$1$};
					\draw (1,2)--(1,3)--(0,3);
					\draw (0.5,2)--(1,3);
					\node at (0.8,2.25) {$0$};
					\node at (0.35,2.6) {$2$};
					\draw (1,3)--(1,4);
					\draw (0,4)--(1,4);
					\draw (0,3)--(1,4);
					\draw (0,4)--(0,5)--(1,5);
					\draw (0,4)--(0.5,5);
					\node at (0.2,4.75) {$0$};
					\node at (0.65,4.4) {$2$};
					\draw (0,5)--(0,6)--(1,6);
					\draw (0,5)--(1,6);
					\node at (0.25,5.7) {$1$};	
					\node at (0.75,3.3) {$1$};
					\node at (0.25,3.7) {$1$};
					
					\node at (0.65,0.4) {$2$};
				\end{scope}	
				\begin{scope}[shift={(-3,0)}]
						\draw (0,0)--(0,-0.5);
					\draw (1,0)--(1,-0.5);
					\node at (0.5,-0.3) {$\vdots$};
					\draw (0,0)--(1,0)--(1,1)--(0,1);
					\draw (0,0)--(0.5,1);
					\node at (0.2,0.75) {$0$};
					
					\draw (1,1)--(1,2)--(0,2);
					\draw (0,1)--(1,2);
					\node at (0.75,1.3) {$1$};
					\node at (0.25,1.7) {$1$};
					\draw (1,2)--(1,3)--(0,3);
					\draw (0.5,2)--(1,3);
					\node at (0.8,2.25) {$0$};
					\node at (0.35,2.6) {$2$};
					\draw (1,3)--(1,4);
					
					\draw (0,3)--(1,4);
					\node at (0.75,3.3) {$1$};
					\draw (0,4)--(1,4);
					
					\node at (0.65,0.4) {$2$};
				\end{scope}	
				\begin{scope}[shift={(-4,0)}]
					\draw (0,0)--(-0.5,0);
				\draw (0,1)--(-0.5,1);
				\draw (0,2)--(-0.5,2);
				\draw (0,3)--(-0.5,3);
				
				\node at (-0.4,0.5) {$\cdots$};
				\node at (-0.4,1.5) {$\cdots$};
				\node at (-0.4,2.5) {$\cdots$};	
						\draw (0,0)--(0,-0.5);
					\draw (1,0)--(1,-0.5);
					\node at (0.5,-0.3) {$\vdots$};
					\draw (0,0)--(1,0)--(1,1)--(0,1)--(0,0);
					\draw (0,0)--(0.5,1);
					\node at (0.2,0.75) {$0$};
					
					\draw (1,1)--(1,2)--(0,2)--(0,1);
					\draw (0,1)--(1,2);
					\node at (0.75,1.3) {$1$};
					
					\draw (1,2)--(1,3)--(0,3)--(0,2);
					\draw (0.5,2)--(1,3);
					\node at (0.8,2.25) {$0$};
					
					\draw (1,3)--(1,4);

					\node at (0.65,0.4) {$2$};
				\end{scope}	
				\node at (-1.5,-1.5) {(a)};
				\begin{scope}[shift={(10,0)}]
					\begin{scope}[shift={(0,0)}]
							\draw (0,0)--(0,-0.5);
						\draw (1,0)--(1,-0.5);
						\node at (0.5,-0.3) {$\vdots$};
						\draw (0,0)--(1,0)--(1,1)--(0,1)--(0,0);
						\draw (0,0)--(0.5,1);
						\node at (0.2,0.75) {$0$};
						
						\draw (1,1)--(1,2)--(0,2)--(0,1);
						\draw (0,1)--(1,2);
						\node at (0.75,1.3) {$1$};
						\node at (0.25,1.7) {$1$};
						\draw (1,2)--(1,3)--(0,3)--(0,2);
						\draw (0.5,2)--(1,3);
						\node at (0.8,2.25) {$0$};
						\node at (0.35,2.6) {$2$};
						\draw (1,3)--(1,4);
						\draw (0,4)--(0,3);
						\draw (0,3)--(1,4);
						\node at (0.75,3.3) {$1$};
						\node at (0.25,3.7) {$1$};
						
						\node at (0.65,0.4) {$2$};
					\end{scope}
					\begin{scope}[shift={(0,4)}]
						\draw (0,0)--(1,0)--(1,1)--(0,1)--(0,0);
						\draw (0,0)--(0.5,1);
						\node at (0.2,0.75) {$0$};
						
						\draw (1,1)--(1,2)--(0,2)--(0,1);
						\draw (0,1)--(1,2);
						\node at (0.75,1.3) {$1$};
						\node at (0.25,1.7) {$1$};
						\draw (1,2)--(1,3)--(0,3)--(0,2);
						\draw (0.5,2)--(1,3);
						\node at (0.8,2.25) {$0$};
						\node at (0.35,2.6) {$2$};
						\draw (1,3)--(1,4);
						\draw (0,4)--(0,3);
						\draw (0,3)--(1,4);
						\node at (0.75,3.3) {$1$};
						\node at (0.25,3.7) {$1$};
						
						\node at (0.65,0.4) {$2$};
					\end{scope}
					\begin{scope}[shift={(0,8)}]
						\draw (0,0)--(1,0)--(1,1)--(0,1)--(0,0);
						\draw (0,0)--(0.5,1);
						\node at (0.2,0.75) {$0$};
						
						\draw (1,1)--(1,2)--(0,2)--(0,1);
						\draw (0,1)--(1,2);
						\node at (0.75,1.3) {$1$};
						
						\draw (1,2)--(1,3)--(0,3)--(0,2);
						\draw (0.5,2)--(1,3);
						\node at (0.8,2.25) {$0$};

						\node at (0.65,0.4) {$2$};
					\end{scope}
					\begin{scope}[shift={(-1,0)}]
							\draw (0,0)--(0,-0.5);
						\draw (1,0)--(1,-0.5);
						\node at (0.5,-0.3) {$\vdots$};
						\draw (0,0)--(1,0)--(1,1)--(0,1);
						\draw (0,0)--(0.5,1);
						\node at (0.2,0.75) {$0$};
						
						\draw (1,1)--(1,2)--(0,2);
						\draw (0,1)--(1,2);
						\node at (0.75,1.3) {$1$};
						\node at (0.25,1.7) {$1$};
						\draw (1,2)--(1,3)--(0,3);
						\draw (0.5,2)--(1,3);
						\node at (0.8,2.25) {$0$};
						\node at (0.35,2.6) {$2$};
						\draw (1,3)--(1,4);
						
						\draw (0,3)--(1,4);
						\node at (0.75,3.3) {$1$};
						\node at (0.25,3.7) {$1$};
						
						\node at (0.65,0.4) {$2$};
					\end{scope}
					\begin{scope}[shift={(-1,4)}]
						\draw (0,0)--(1,0)--(1,1)--(0,1)--(0,0);
						\draw (0,0)--(0.5,1);
						\node at (0.2,0.75) {$0$};
						
						\draw (1,1)--(1,2)--(0,2)--(0,1);
						\draw (0,1)--(1,2);
						\node at (0.75,1.3) {$1$};
						
						\draw (1,2)--(1,3)--(0,3)--(0,2);
						\draw (0.5,2)--(1,3);
						\node at (0.8,2.25) {$0$};
						
						\draw (1,3)--(1,4);

						\node at (0.65,0.4) {$2$};
					\end{scope}
					\begin{scope}[shift={(-2,0)}]
							\draw (0,0)--(0,-0.5);
						\draw (1,0)--(1,-0.5);
						\node at (0.5,-0.3) {$\vdots$};
						\draw (0,0)--(1,0)--(1,1)--(0,1);
						\draw (0,0)--(0.5,1);
						\node at (0.2,0.75) {$0$};
						
						\draw (1,1)--(1,2)--(0,2);
						\draw (0,1)--(1,2);
						\node at (0.75,1.3) {$1$};
						\node at (0.25,1.7) {$1$};
						\draw (1,2)--(1,3)--(0,3);
						\draw (0.5,2)--(1,3);
						\node at (0.8,2.25) {$0$};
						\node at (0.35,2.6) {$2$};
						\draw (1,3)--(1,4);
						\draw (0,4)--(1,4);
						\draw (0,3)--(1,4);
						\draw (0,4)--(0,5)--(1,5);
						\draw (0,4)--(0.5,5);
						\node at (0.8,6.25) {$0$};
						\node at (0.65,4.4) {$2$};
						\draw (0,5)--(0,6)--(1,6);
						\draw (0,5)--(1,6);
						\draw (0,6)--(0,7)--(1,7);
						\draw (0.5,6)--(1,7);
						
						\node at (0.75,5.3) {$1$};
						\node at (0.75,3.3) {$1$};
						\node at (0.25,3.7) {$1$};
						
						\node at (0.65,0.4) {$2$};
					\end{scope}	
					\begin{scope}[shift={(-3,0)}]
							\draw (0,0)--(0,-0.5);
						\draw (1,0)--(1,-0.5);
						\node at (0.5,-0.3) {$\vdots$};
						\draw (0,0)--(1,0)--(1,1)--(0,1);
						\draw (0,0)--(0.5,1);
						\node at (0.2,0.75) {$0$};
						
						\draw (1,1)--(1,2)--(0,2);
						\draw (0,1)--(1,2);
						\node at (0.75,1.3) {$1$};
						\node at (0.25,1.7) {$1$};
						\draw (1,2)--(1,3)--(0,3);
						\draw (0.5,2)--(1,3);
						\node at (0.8,2.25) {$0$};
						\node at (0.35,2.6) {$2$};
						\draw (1,3)--(1,4);
						
						\draw (0,3)--(1,4);
						\node at (0.75,3.3) {$1$};
						\draw (0,4)--(1,4);
						
						\node at (0.65,0.4) {$2$};
					\end{scope}	
					\begin{scope}[shift={(-4,0)}]
						\draw (0,0)--(0,-0.5);
					\draw (1,0)--(1,-0.5);
					\node at (0.5,-0.3) {$\vdots$};	
						
						\draw (0,0)--(-0.5,0);
					\draw (0,1)--(-0.5,1);
					\draw (0,2)--(-0.5,2);
					\draw (0,3)--(-0.5,3);
					
					\node at (-0.4,0.5) {$\cdots$};
					\node at (-0.4,1.5) {$\cdots$};
					\node at (-0.4,2.5) {$\cdots$};		
						\draw (0,0)--(1,0)--(1,1)--(0,1)--(0,0);
						\draw (0,0)--(0.5,1);
						\node at (0.2,0.75) {$0$};
						
						\draw (1,1)--(1,2)--(0,2)--(0,1);
						\draw (0,1)--(1,2);
						\node at (0.75,1.3) {$1$};
						
						\draw (1,2)--(1,3)--(0,3)--(0,2);
						\draw (0.5,2)--(1,3);
						\node at (0.8,2.25) {$0$};
						
						\draw (1,3)--(1,4);

						\node at (0.65,0.4) {$2$};
					\end{scope}	
					\node at (-1.5,-1.5) {(b)};	
				\end{scope}
\end{tikzpicture}			
\end{center}

\begin{center}
\begin{tikzpicture}	[scale=0.7] 				
				\begin{scope}[shift={(0,0)}]
					\begin{scope}[shift={(0,0)}]
							\draw (0,0)--(0,-0.5);
						\draw (1,0)--(1,-0.5);
						\node at (0.5,-0.3) {$\vdots$};
						
						\draw (0,0)--(1,0)--(1,1)--(0,1)--(0,0);
						\draw (0,0)--(0.5,1);
						\node at (0.2,0.75) {$0$};
						
						\draw (1,1)--(1,2)--(0,2)--(0,1);
						\draw (0,1)--(1,2);
						\node at (0.75,1.3) {$1$};
						\node at (0.25,1.7) {$1$};
						\draw (1,2)--(1,3)--(0,3)--(0,2);
						\draw (0.5,2)--(1,3);
						\node at (0.8,2.25) {$0$};
						\node at (0.35,2.6) {$2$};
						\draw (1,3)--(1,4);
						\draw (0,4)--(0,3);
						\draw (0,3)--(1,4);
						\node at (0.75,3.3) {$1$};
						\node at (0.25,3.7) {$1$};
						
						\node at (0.65,0.4) {$2$};
					\end{scope}
					\begin{scope}[shift={(0,4)}]
						\draw (0,0)--(1,0)--(1,1)--(0,1)--(0,0);
						\draw (0,0)--(0.5,1);
						\node at (0.2,0.75) {$0$};
						
						\draw (1,1)--(1,2)--(0,2)--(0,1);
						\draw (0,1)--(1,2);
						\node at (0.75,1.3) {$1$};
						\node at (0.25,1.7) {$1$};
						\draw (1,2)--(1,3)--(0,3)--(0,2);
						\draw (0.5,2)--(1,3);
						\node at (0.8,2.25) {$0$};
						\node at (0.35,2.6) {$2$};
						\draw (1,3)--(1,4);
						\draw (0,4)--(0,3);
						\draw (0,3)--(1,4);
						\node at (0.75,3.3) {$1$};
						\node at (0.25,3.7) {$1$};
						
						\node at (0.65,0.4) {$2$};
					\end{scope}
					\begin{scope}[shift={(0,8)}]
						\draw (0,0)--(1,0)--(1,1)--(0,1)--(0,0);
						\draw (0,0)--(0.5,1);
						\node at (0.2,0.75) {$0$};
						\node at (0.8,2.25) {$0$};
						\draw (1,1)--(1,2)--(0,2)--(0,1);
						\draw (0,1)--(1,2);
						\node at (0.25,1.7) {$1$};
						\node at (0.75,1.3) {$1$};
						\draw (1,2)--(1,3)--(0,3)--(0,2);
						\draw (0.5,2)--(1,3);
						\node at (0.65,0.4) {$2$};
					\end{scope}
					\begin{scope}[shift={(-1,0)}]
							\draw (0,0)--(0,-0.5);
						\draw (1,0)--(1,-0.5);
						\node at (0.5,-0.3) {$\vdots$};
						
						\draw (0,0)--(1,0)--(1,1)--(0,1);
						\draw (0,0)--(0.5,1);
						\node at (0.2,0.75) {$0$};
						
						\draw (1,1)--(1,2)--(0,2);
						\draw (0,1)--(1,2);
						\node at (0.75,1.3) {$1$};
						\node at (0.25,1.7) {$1$};
						\draw (1,2)--(1,3)--(0,3);
						\draw (0.5,2)--(1,3);
						\node at (0.8,2.25) {$0$};
						\node at (0.35,2.6) {$2$};
						\draw (1,3)--(1,4);
						
						\draw (0,3)--(1,4);
						\node at (0.75,3.3) {$1$};
						\node at (0.25,3.7) {$1$};
						
						\node at (0.65,0.4) {$2$};
					\end{scope}
					\begin{scope}[shift={(-1,4)}]
						\draw (0,0)--(1,0)--(1,1)--(0,1)--(0,0);
						\draw (0,0)--(0.5,1);
						\node at (0.2,0.75) {$0$};
						
						\draw (1,1)--(1,2)--(0,2)--(0,1);
						\draw (0,1)--(1,2);
						\node at (0.75,1.3) {$1$};
						
						\draw (1,3)--(0,3)--(0,2);
						\draw (0.5,2)--(1,3);

						\draw (1,3)--(1,4);
						\node at (0.25,1.7) {$1$};
						\node at (0.35,2.6) {$2$};

						\node at (0.65,0.4) {$2$};
						\draw (1,4)--(0,4)--(0,3);
						\draw (0,3)--(1,4);
						\node at (0.25,3.7) {$1$};
						\draw (1,5)--(0,5)--(0,4);
						\draw (0,4)--(0.5,5);
						\node at (0.2,4.75) {$0$};	
					\end{scope}
					\begin{scope}[shift={(-2,0)}]
							\draw (0,0)--(0,-0.5);
						\draw (1,0)--(1,-0.5);
						\node at (0.5,-0.3) {$\vdots$};
						
						\draw (0,0)--(1,0)--(1,1)--(0,1);
						\draw (0,0)--(0.5,1);
						\node at (0.2,0.75) {$0$};
						
						\draw (1,1)--(1,2)--(0,2);
						\draw (0,1)--(1,2);
						\node at (0.75,1.3) {$1$};
						\node at (0.25,1.7) {$1$};
						\draw (1,2)--(1,3)--(0,3);
						\draw (0.5,2)--(1,3);
						\node at (0.8,2.25) {$0$};
						\node at (0.35,2.6) {$2$};
						\draw (1,3)--(1,4);
						\draw (0,4)--(1,4);
						\draw (0,3)--(1,4);
						\draw (0,4)--(0,5)--(1,5);
						\draw (0,4)--(0.5,5);
						
						\node at (0.65,4.4) {$2$};
						\draw (0,5)--(0,6)--(1,6);
						\draw (0,5)--(1,6);
						\draw (0,6)--(0,7)--(1,7);
						\draw (0.5,6)--(1,7);
						
						\node at (0.75,5.3) {$1$};
						\node at (0.75,3.3) {$1$};
						\node at (0.25,3.7) {$1$};
						
						\node at (0.65,0.4) {$2$};
						\draw (0,7)--(0,8)--(1,8);
						\draw (0,7)--(1,8);
						\draw (0,8)--(0,9)--(1,9);	
						\draw (0,8)--(0.5,9);
						\node at (0.2,4.75) {$0$};
						\node at (0.25,5.7) {$1$};
						
						\node at (0.35,6.6) {$2$};
						\node at (0.25,7.7) {$1$};
						
						\node at (0.2,8.75) {$0$};	
					\end{scope}	
					\begin{scope}[shift={(-3,0)}]
							\draw (0,0)--(0,-0.5);
						\draw (1,0)--(1,-0.5);
						\node at (0.5,-0.3) {$\vdots$};
						
						\draw (0,0)--(1,0)--(1,1)--(0,1);
						\draw (0,0)--(0.5,1);
						\node at (0.2,0.75) {$0$};
						
						\draw (1,1)--(1,2)--(0,2);
						\draw (0,1)--(1,2);
						\node at (0.75,1.3) {$1$};
						\node at (0.25,1.7) {$1$};
						\draw (1,2)--(1,3)--(0,3);
						\draw (0.5,2)--(1,3);
						\node at (0.8,2.25) {$0$};
						\node at (0.35,2.6) {$2$};
						\draw (1,3)--(1,4);
						
						\draw (0,3)--(1,4);
						\node at (0.75,3.3) {$1$};
						\node at (0.25,3.7) {$1$};
						\draw (0,4)--(1,4);
						\node at (0.65,0.4) {$2$};
						\draw (0,4)--(0,5)--(1,5);
						\draw (0,4)--(0.5,5);
						\draw (0,5)--(0,6)--(1,6);
						\draw (0,5)--(1,6);
						\draw (0,6)--(0,7)--(1,7);
						\draw (0.5,6)--(1,7);
						\node at (0.2,4.75) {$0$};
						\node at (0.65,4.4) {$2$};
						\node at (0.75,5.3) {$1$};
						\node at (0.25,5.7) {$1$};
						\node at (0.35,6.6) {$2$};
					\end{scope}	
					\begin{scope}[shift={(-4,0)}]
							\draw (0,0)--(0,-0.5);
						\draw (1,0)--(1,-0.5);
						\node at (0.5,-0.3) {$\vdots$};
						
						\draw (0,0)--(1,0)--(1,1)--(0,1)--(0,0);
						\draw (0,0)--(0.5,1);
						\node at (0.2,0.75) {$0$};
						
						\draw (1,1)--(1,2)--(0,2)--(0,1);
						\draw (0,1)--(1,2);
						\node at (0.75,1.3) {$1$};
						\node at (0.25,1.7) {$1$};
						\draw (1,2)--(1,3)--(0,3)--(0,2);
						\draw (0.5,2)--(1,3);
						\node at (0.35,2.6) {$2$};
						\node at (0.8,2.25) {$0$};
						
						\draw (0,3)--(0,4)--(1,4)--(1,3);
						\draw (0,3)--(1,4);	
						\node at (0.75,3.3) {$1$};
						\node at (0.25,3.7) {$1$};
						
						\node at (0.65,0.4) {$2$};
					\end{scope}	
					\begin{scope}[shift={(-5,0)}]
							\draw (0,0)--(0,-0.5);
						\draw (1,0)--(1,-0.5);
						\node at (0.5,-0.3) {$\vdots$};
						
							\draw (0,0)--(-0.5,0);
						\draw (0,1)--(-0.5,1);
						\draw (0,2)--(-0.5,2);
						\draw (0,3)--(-0.5,3);
						
						\node at (-0.4,0.5) {$\cdots$};
						\node at (-0.4,1.5) {$\cdots$};
						\node at (-0.4,2.5) {$\cdots$};		
						
						\draw (0,0)--(1,0)--(1,1)--(0,1)--(0,0);
						\draw (0,0)--(0.5,1);
						\node at (0.2,0.75) {$0$};
						
						\draw (1,1)--(1,2)--(0,2)--(0,1);
						\draw (0,1)--(1,2);
						\node at (0.75,1.3) {$1$};
						
						\draw (1,2)--(1,3)--(0,3)--(0,2);
						\draw (0.5,2)--(1,3);
						\node at (0.8,2.25) {$0$};
						\node at (0.65,0.4) {$2$};
					\end{scope}	
					\node at (-2,-1.5) {(c)};	
				\end{scope}
				\begin{scope}[shift={(9,0)}]
					\begin{scope}[shift={(0,0)}]
							\draw (0,0)--(0,-0.5);
						\draw (1,0)--(1,-0.5);
						\node at (0.5,-0.3) {$\vdots$};
						
						\draw (0,0)--(1,0)--(1,1)--(0,1)--(0,0);
						\draw (0,0)--(0.5,1);
						\node at (0.2,0.75) {$0$};
						
						\draw (1,1)--(1,2)--(0,2)--(0,1);
						\draw (0,1)--(1,2);
						\node at (0.75,1.3) {$1$};
						\node at (0.25,1.7) {$1$};
						\draw (1,2)--(1,3)--(0,3)--(0,2);
						\draw (0.5,2)--(1,3);
						\node at (0.8,2.25) {$0$};
						\node at (0.35,2.6) {$2$};
						\draw (1,3)--(1,4);
						\draw (0,4)--(0,3);
						\draw (0,3)--(1,4);
						\node at (0.75,3.3) {$1$};
						\node at (0.25,3.7) {$1$};
						
						\node at (0.65,0.4) {$2$};
					\end{scope}
					\begin{scope}[shift={(0,4)}]
						\draw (0,0)--(1,0)--(1,1)--(0,1)--(0,0);
						\draw (0,0)--(0.5,1);
						\node at (0.2,0.75) {$0$};
						
						\draw (1,1)--(1,2)--(0,2)--(0,1);
						\draw (0,1)--(1,2);
						\node at (0.75,1.3) {$1$};
						\node at (0.25,1.7) {$1$};
						\draw (1,2)--(1,3)--(0,3)--(0,2);
						\draw (0.5,2)--(1,3);
						\node at (0.8,2.25) {$0$};
						\node at (0.35,2.6) {$2$};
						\draw (1,3)--(1,4);
						\draw (0,4)--(0,3);
						\draw (0,3)--(1,4);
						\node at (0.75,3.3) {$1$};
						\node at (0.25,3.7) {$1$};
						
						\node at (0.65,0.4) {$2$};
					\end{scope}
					\begin{scope}[shift={(0,8)}]
						\draw (0,0)--(1,0)--(1,1)--(0,1)--(0,0);
						\draw (0,0)--(0.5,1);
						\node at (0.2,0.75) {$0$};
						
						\draw (1,1)--(1,2)--(0,2)--(0,1);
						\draw (0,1)--(1,2);
						\node at (0.75,1.3) {$1$};
						\node at (0.25,1.7) {$1$};
						\draw (1,2)--(1,3)--(0,3)--(0,2);
						\draw (0.5,2)--(1,3);
						\node at (0.8,2.25) {$0$};
						\node at (0.35,2.6) {$2$};
						\draw (1,3)--(1,4)--(0,4)--(0,3);
						\draw (0,3)--(1,4);
						\node at (0.75,3.3) {$1$};

						\node at (0.65,0.4) {$2$};
					\end{scope}
					\begin{scope}[shift={(-1,0)}]
							\draw (0,0)--(0,-0.5);
						\draw (1,0)--(1,-0.5);
						\node at (0.5,-0.3) {$\vdots$};
						\draw (0,0)--(1,0)--(1,1)--(0,1);
						\draw (0,0)--(0.5,1);
						\node at (0.2,0.75) {$0$};
						
						\draw (1,1)--(1,2)--(0,2);
						\draw (0,1)--(1,2);
						\node at (0.75,1.3) {$1$};
						\node at (0.25,1.7) {$1$};
						\draw (1,2)--(1,3)--(0,3);
						\draw (0.5,2)--(1,3);
						\node at (0.8,2.25) {$0$};
						\node at (0.35,2.6) {$2$};
						\draw (1,3)--(1,4);
						
						\draw (0,3)--(1,4);
						\node at (0.75,3.3) {$1$};
						\node at (0.25,3.7) {$1$};
						
						\node at (0.65,0.4) {$2$};
					\end{scope}
					\begin{scope}[shift={(-1,4)}]
						\draw (0,0)--(1,0)--(1,1)--(0,1)--(0,0);
						\draw (0,0)--(0.5,1);
						\node at (0.2,0.75) {$0$};
						
						\draw (1,1)--(1,2)--(0,2)--(0,1);
						\draw (0,1)--(1,2);
						\node at (0.75,1.3) {$1$};
						\node at (0.25,1.7) {$1$};
						\draw (1,2)--(1,3)--(0,3)--(0,2);
						\draw (0.5,2)--(1,3);
						\node at (0.8,2.25) {$0$};
						\node at (0.35,2.6) {$2$};
						
						\draw (0,4)--(0,3);
						\draw (0,4)--(1,4);
						\draw (0,3)--(1,4);
						\node at (0.75,3.3) {$1$};
						\node at (0.25,3.7) {$1$};
						
						\node at (0.65,0.4) {$2$};
						\draw (0,4)--(0,5)--(1,5);
						\draw (0,4)--(0.5,5);
						\draw (0,5)--(0,6)--(1,6);
						\draw (0,5)--(1,6);
						\node at (0.2,4.75) {$0$};	
						\node at (0.65,4.4) {$2$};
						\node at (0.25,5.7) {$1$};		
					\end{scope}
					\begin{scope}[shift={(-2,0)}]
							\draw (0,0)--(0,-0.5);
						\draw (1,0)--(1,-0.5);
						\node at (0.5,-0.3) {$\vdots$};
						\draw (0,0)--(1,0)--(1,1)--(0,1);
						\draw (0,0)--(0.5,1);
						\node at (0.2,0.75) {$0$};
						
						\draw (1,1)--(1,2)--(0,2);
						\draw (0,1)--(1,2);
						\node at (0.75,1.3) {$1$};
						\node at (0.25,1.7) {$1$};
						\draw (1,2)--(1,3)--(0,3);
						\draw (0.5,2)--(1,3);
						\node at (0.8,2.25) {$0$};
						\node at (0.35,2.6) {$2$};
						\draw (1,3)--(1,4);
						\draw (0,4)--(1,4);
						\draw (0,3)--(1,4);
						\draw (0,4)--(0,5)--(1,5);
						\draw (0,4)--(0.5,5);
						\node at (0.2,4.75) {$0$};
						\node at (0.65,4.4) {$2$};
						\draw (0,5)--(0,6)--(1,6);
						\draw (0,5)--(1,6);
						\node at (0.25,5.7) {$1$};
						\node at (0.75,5.3) {$1$};	
						\node at (0.75,3.3) {$1$};
						\node at (0.25,3.7) {$1$};
						
						\node at (0.65,0.4) {$2$};
						\draw (0,6)--(0,7)--(1,7);
						\draw (0.5,6)--(1,7);
						\draw (0,7)--(0,8)--(1,8);
						\draw (0,7)--(1,8);
						\node at (0.8,6.25) {$0$};
						\node at (0.35,6.6) {$2$};	
						\node at (0.75,7.3) {$1$};
						
					\end{scope}	
					\begin{scope}[shift={(-3,0)}]
							\draw (0,0)--(0,-0.5);
						\draw (1,0)--(1,-0.5);
						\node at (0.5,-0.3) {$\vdots$};
						\draw (0,0)--(1,0)--(1,1)--(0,1);
						\draw (0,0)--(0.5,1);
						\node at (0.2,0.75) {$0$};
						
						\draw (1,1)--(1,2)--(0,2);
						\draw (0,1)--(1,2);
						\node at (0.75,1.3) {$1$};
						
						\draw (1,2)--(1,3)--(0,3);
						\draw (0.5,2)--(1,3);
						\node at (0.8,2.25) {$0$};
						
						\draw (1,3)--(1,4);
						
						\node at (0.65,0.4) {$2$};
					\end{scope}	
					\begin{scope}[shift={(-4,0)}]
						\draw (0,0)--(0,-0.5);
					\draw (1,0)--(1,-0.5);
					\node at (0.5,-0.3) {$\vdots$};	
						
							\draw (0,0)--(-0.5,0);
						\draw (0,1)--(-0.5,1);
						\draw (0,2)--(-0.5,2);
						\draw (0,3)--(-0.5,3);
						
						\node at (-0.4,0.5) {$\cdots$};
						\node at (-0.4,1.5) {$\cdots$};
						\node at (-0.4,2.5) {$\cdots$};		
						
						\draw (0,0)--(1,0)--(1,1)--(0,1)--(0,0);
						\draw (0,0)--(0.5,1);
						
						\draw (1,1)--(1,2)--(0,2)--(0,1);
						\draw (0,1)--(1,2);
						\node at (0.75,1.3) {$1$};
						
						\draw (1,2)--(1,3)--(0,3)--(0,2);
						\draw (0.5,2)--(1,3);
						\node at (0.8,2.25) {$0$};
						\node at (0.65,0.4) {$2$};
					\end{scope}	
					\node at (-1.5,-1.5) {(d)};	
				\end{scope}

			\end{tikzpicture}	
		\end{center}	

\end{Ex}

\begin{Ex}
For the case of $U_q(G_2^{(1)})$, in the picture below, the Young walls  (a) and (b) are reduced, but the Young walls (c) and (d) are not reduced.

	\begin{center}
			\begin{tikzpicture}[scale=0.7]
\begin{scope}	[shift={(0,0)}]		
				\draw (0,0)--(0,-0.5);
				\draw (1,0)--(1,-0.5);
				\node at (0.5,-0.3) {$\vdots$};
				
				\draw (0,0)--(1,0)--(1,1)--(0,1)--(0,0);
				\draw (0,0)--(0.5,1);\draw (0,0)--(1,1);
				
				\draw (1,1)--(1,2)--(0,2)--(0,1);
				\draw (0.5,1.5)--(1,1.5);\draw (0.5,1)--(0.5,2);
				
				\draw (1,2)--(1,3)--(0,3)--(0,2);
				\draw (0.5,2)--(1,3);	\draw (0,2)--(1,3);
				
				\draw (1,3)--(1,4)--(0,4)--(0,3);
				\draw (0.5,3)--(0.5,4);\draw (0,3.5)--(0.5,3.5);
				\node at (0.15,0.75) {$2$};	
				\node at (0.53,0.75) {$0$};
				\node at (0.7,0.3) {$2$};
				\node at (0.75,1.75) {$2$};
				\node at (0.75,1.25) {$1$};
				\node at (0.25,1.5) {$1$};
				\node at (0.85,2.25) {$2$};	
				\node at (0.48,2.25) {$0$};
				\node at (0.3,2.7) {$2$};
				\node at (0.25,3.75) {$2$};
				\node at (0.25,3.25) {$1$};
				\node at (0.75,3.5) {$1$};
				\begin{scope}[shift={(0,4)}]
				
				\draw (1,0)--(1,1)--(0,1)--(0,0);
				\draw (0,0)--(0.5,1);\draw (0,0)--(1,1);
				
				\draw (1,1)--(1,2)--(0,2)--(0,1);
				\draw (0.5,1.5)--(1,1.5);\draw (0.5,1)--(0.5,2);
				
				\draw (1,2)--(1,3)--(0,3)--(0,2);
				\draw (0.5,2)--(1,3);	\draw (0,2)--(1,3);
				
				\draw (1,3)--(1,4)--(0,4)--(0,3);
				\draw (0.5,3)--(0.5,4);\draw (0,3.5)--(0.5,3.5);
				\node at (0.15,0.75) {$2$};	
				\node at (0.53,0.75) {$0$};
				\node at (0.7,0.3) {$2$};
				\node at (0.75,1.75) {$2$};
				\node at (0.75,1.25) {$1$};
				\node at (0.25,1.5) {$1$};
				\node at (0.85,2.25) {$2$};	
				\node at (0.48,2.25) {$0$};
				\node at (0.3,2.7) {$2$};
				\node at (0.25,3.75) {$2$};
				\node at (0.25,3.25) {$1$};
				\node at (0.75,3.5) {$1$};
			\end{scope}
					\begin{scope}[shift={(0,8)}]
			
			\draw (1,0)--(1,1)--(0,1)--(0,0);
			\draw (0,0)--(0.5,1);\draw (0,0)--(1,1);
			
			\draw (1,1)--(1,2)--(0,2)--(0,1);
			\draw (0.5,1.5)--(1,1.5);\draw (0.5,1)--(0.5,2);
			
			\draw (1,2)--(1,3)--(0,3)--(0,2);
			\draw (0.5,2)--(1,3);	\draw (0,2)--(1,3);
			
			\draw (1,3)--(1,4)--(0,4)--(0,3);
			\draw (0.5,3)--(0.5,4);\draw (0,3.5)--(0.5,3.5);
				\draw (1,4)--(1,5)--(0,5)--(0,4);\draw (0,4)--(1,5);
			\node at (0.7,4.3) {$2$};
			\node at (0.15,0.75) {$2$};	
			\node at (0.53,0.75) {$0$};
			\node at (0.7,0.3) {$2$};
			\node at (0.75,1.75) {$2$};
			\node at (0.75,1.25) {$1$};
			\node at (0.25,1.5) {$1$};
			\node at (0.85,2.25) {$2$};	
			\node at (0.48,2.25) {$0$};
			\node at (0.3,2.7) {$2$};
			\node at (0.25,3.75) {$2$};
			\node at (0.25,3.25) {$1$};
			\node at (0.75,3.5) {$1$};
		\end{scope}
	\end{scope}	

\begin{scope}	[shift={(-1,0)}]		
	\draw (0,0)--(0,-0.5);
	\draw (1,0)--(1,-0.5);
	\node at (0.5,-0.3) {$\vdots$};
	
	\draw (0,0)--(1,0)--(1,1)--(0,1)--(0,0);
	\draw (0,0)--(0.5,1);\draw (0,0)--(1,1);
	
	\draw (1,1)--(1,2)--(0,2)--(0,1);
	\draw (0.5,1.5)--(1,1.5);\draw (0.5,1)--(0.5,2);
	
	\draw (1,2)--(1,3)--(0,3)--(0,2);
	\draw (0.5,2)--(1,3);	\draw (0,2)--(1,3);
	
	\draw (1,3)--(1,4)--(0,4)--(0,3);
	\draw (0.5,3)--(0.5,4);\draw (0,3.5)--(0.5,3.5);
	\node at (0.15,0.75) {$2$};	
	\node at (0.53,0.75) {$0$};
	\node at (0.7,0.3) {$2$};
	\node at (0.75,1.75) {$2$};
	\node at (0.75,1.25) {$1$};
	\node at (0.25,1.5) {$1$};
	\node at (0.85,2.25) {$2$};	
	\node at (0.48,2.25) {$0$};
	\node at (0.3,2.7) {$2$};
	\node at (0.25,3.75) {$2$};
	\node at (0.25,3.25) {$1$};
	\node at (0.75,3.5) {$1$};
	\begin{scope}[shift={(0,4)}]
		
		\draw (1,0)--(1,1)--(0,1)--(0,0);
		\draw (0,0)--(0.5,1);\draw (0,0)--(1,1);
		
		\draw (1,1)--(1,2)--(0,2)--(0,1);
		\draw (0.5,1.5)--(1,1.5);\draw (0.5,1)--(0.5,2);
		
		\draw (1,2)--(1,3)--(0,3)--(0,2);
		\draw (0.5,2)--(1,3);	\draw (0,2)--(1,3);
		
		\draw (1,3)--(1,4)--(0,4)--(0,3);
		\draw (0.5,3)--(0.5,4);\draw (0,3.5)--(0.5,3.5);
		\node at (0.15,0.75) {$2$};	
		\node at (0.53,0.75) {$0$};
		\node at (0.7,0.3) {$2$};
		\node at (0.75,1.75) {$2$};
		\node at (0.75,1.25) {$1$};
		\node at (0.25,1.5) {$1$};
		\node at (0.85,2.25) {$2$};	
		\node at (0.48,2.25) {$0$};
		\node at (0.3,2.7) {$2$};
		\node at (0.25,3.75) {$2$};
		\node at (0.25,3.25) {$1$};
		\node at (0.75,3.5) {$1$};
	\end{scope}
	\begin{scope}[shift={(0,8)}]
		
		\draw (1,0)--(1,1)--(0,1)--(0,0);
		\draw (0,0)--(0.5,1);\draw (0,0)--(1,1);
		
		\draw (1,1)--(1,2)--(0,2)--(0,1);
		\draw (0.5,1.5)--(1,1.5);\draw (0.5,1)--(0.5,2);
		
		\draw (1,2)--(1,3)--(0,3)--(0,2);
		\draw (0.5,2)--(1,3);	\draw (0,2)--(1,3);

		\node at (0.15,0.75) {$2$};	
		\node at (0.53,0.75) {$0$};
		\node at (0.7,0.3) {$2$};

		\node at (0.25,1.5) {$1$};

		\node at (0.48,2.25) {$0$};

	\end{scope}
\end{scope}	

\begin{scope}	[shift={(-2,0)}]		
	\draw (0,0)--(0,-0.5);
	\draw (1,0)--(1,-0.5);
	\node at (0.5,-0.3) {$\vdots$};
	
	\draw (0,0)--(1,0)--(1,1)--(0,1)--(0,0);
	\draw (0,0)--(0.5,1);\draw (0,0)--(1,1);
	
	\draw (1,1)--(1,2)--(0,2)--(0,1);
	\draw (0.5,1.5)--(1,1.5);\draw (0.5,1)--(0.5,2);
	
	\draw (1,2)--(1,3)--(0,3)--(0,2);
	\draw (0.5,2)--(1,3);	\draw (0,2)--(1,3);
	
	\draw (1,3)--(1,4)--(0,4)--(0,3);
	\draw (0.5,3)--(0.5,4);\draw (0,3.5)--(0.5,3.5);
	\node at (0.15,0.75) {$2$};	
	\node at (0.53,0.75) {$0$};
	\node at (0.7,0.3) {$2$};
	\node at (0.75,1.75) {$2$};
	\node at (0.75,1.25) {$1$};
	\node at (0.25,1.5) {$1$};
	\node at (0.85,2.25) {$2$};	
	\node at (0.48,2.25) {$0$};
	\node at (0.3,2.7) {$2$};
	\node at (0.25,3.75) {$2$};
	\node at (0.25,3.25) {$1$};
	\node at (0.75,3.5) {$1$};
	\begin{scope}[shift={(0,4)}]
		
		\draw (1,0)--(1,1)--(0,1)--(0,0);
		\draw (0,0)--(0.5,1);\draw (0,0)--(1,1);
		
		\draw (1,1)--(1,2)--(0,2)--(0,1);
		\draw (0.5,1.5)--(1,1.5);\draw (0.5,1)--(0.5,2);
		
		\draw (1,2)--(1,3)--(0,3)--(0,2);
		\draw (0.5,2)--(1,3);	\draw (0,2)--(1,3);

		\node at (0.15,0.75) {$2$};	
		\node at (0.53,0.75) {$0$};
		\node at (0.7,0.3) {$2$};
		\node at (0.75,1.75) {$2$};
		\node at (0.75,1.25) {$1$};
		\node at (0.25,1.5) {$1$};

		\node at (0.48,2.25) {$0$};
		\node at (0.3,2.7) {$2$};

	\end{scope}

\end{scope}	

\begin{scope}	[shift={(-3,0)}]		
	\draw (0,0)--(0,-0.5);
	\draw (1,0)--(1,-0.5);
	\node at (0.5,-0.3) {$\vdots$};
	
	\draw (0,0)--(1,0)--(1,1)--(0,1)--(0,0);
	\draw (0,0)--(0.5,1);\draw (0,0)--(1,1);
	
	\draw (1,1)--(1,2)--(0,2)--(0,1);
	\draw (0.5,1.5)--(1,1.5);\draw (0.5,1)--(0.5,2);
	
	\draw (1,2)--(1,3)--(0,3)--(0,2);
	\draw (0.5,2)--(1,3);	\draw (0,2)--(1,3);
	
	\draw (1,3)--(1,4)--(0,4)--(0,3);
	\draw (0.5,3)--(0.5,4);\draw (0,3.5)--(0.5,3.5);
	\node at (0.15,0.75) {$2$};	
	\node at (0.53,0.75) {$0$};
	\node at (0.7,0.3) {$2$};
	\node at (0.75,1.75) {$2$};
	\node at (0.75,1.25) {$1$};
	\node at (0.25,1.5) {$1$};
	\node at (0.85,2.25) {$2$};	
	\node at (0.48,2.25) {$0$};
	\node at (0.3,2.7) {$2$};
	\node at (0.25,3.75) {$2$};
	\node at (0.25,3.25) {$1$};
	\node at (0.75,3.5) {$1$};
	\begin{scope}[shift={(0,4)}]
		
		\draw (1,0)--(1,1)--(0,1)--(0,0);
		\draw (0,0)--(0.5,1);\draw (0,0)--(1,1);	
		\node at (0.53,0.75) {$0$};
	\end{scope}
	
\end{scope}	

\begin{scope}	[shift={(-4,0)}]		
	\draw (0,0)--(0,-0.5);
	\draw (1,0)--(1,-0.5);
	\node at (0.5,-0.3) {$\vdots$};
	
	\draw (0,0)--(1,0)--(1,1)--(0,1)--(0,0);
	\draw (0,0)--(0.5,1);\draw (0,0)--(1,1);
	
	\draw (1,1)--(1,2)--(0,2)--(0,1);
	\draw (0.5,1.5)--(1,1.5);\draw (0.5,1)--(0.5,2);
	
	\draw (1,2)--(1,3)--(0,3)--(0,2);
	\draw (0.5,2)--(1,3);	\draw (0,2)--(1,3);

	\node at (0.15,0.75) {$2$};	
	\node at (0.53,0.75) {$0$};
	\node at (0.7,0.3) {$2$};
	\node at (0.75,1.25) {$1$};
	\node at (0.25,1.5) {$1$};
	\node at (0.48,2.25) {$0$};
	
\end{scope}	

\begin{scope}	[shift={(-5,0)}]		
	\draw (0,0)--(0,-0.5);
	\draw (1,0)--(1,-0.5);
	\node at (0.5,-0.3) {$\vdots$};
	
	\draw (0,0)--(1,0)--(1,1)--(0,1)--(0,0);
	\draw (0,0)--(0.5,1);\draw (0,0)--(1,1);
	
	\draw (1,1)--(1,2)--(0,2)--(0,1);
	\draw (0.5,1.5)--(1,1.5);\draw (0.5,1)--(0.5,2);
	
	\draw (1,2)--(1,3)--(0,3)--(0,2);
	\draw (0.5,2)--(1,3);	\draw (0,2)--(1,3);

	\node at (0.15,0.75) {$2$};	

	\node at (0.7,0.3) {$2$};

	\node at (0.25,1.5) {$1$};
	\node at (0.48,2.25) {$0$};

	\draw (0,0)--(-0.5,0);
\draw (0,1)--(-0.5,1);
\draw (0,2)--(-0.5,2);
\draw (0,3)--(-0.5,3);

\node at (-0.4,0.5) {$\cdots$};
\node at (-0.4,1.5) {$\cdots$};
\node at (-0.4,2.5) {$\cdots$};
\node at (3,-1.2) {(a)};
\end{scope}	
\begin{scope}	[shift={(10,0)}]	
\begin{scope}	[shift={(0,0)}]		
	\draw (0,0)--(0,-0.5);
	\draw (1,0)--(1,-0.5);
	\node at (0.5,-0.3) {$\vdots$};
	
	\draw (0,0)--(1,0)--(1,1)--(0,1)--(0,0);
	\draw (0,0)--(0.5,1);\draw (0,0)--(1,1);
	
	\draw (1,1)--(1,2)--(0,2)--(0,1);
	\draw (0.5,1.5)--(1,1.5);\draw (0.5,1)--(0.5,2);
	
	\draw (1,2)--(1,3)--(0,3)--(0,2);
	\draw (0.5,2)--(1,3);	\draw (0,2)--(1,3);
	
	\draw (1,3)--(1,4)--(0,4)--(0,3);
	\draw (0.5,3)--(0.5,4);\draw (0,3.5)--(0.5,3.5);
	\node at (0.15,0.75) {$2$};	
	\node at (0.53,0.75) {$0$};
	\node at (0.7,0.3) {$2$};
	\node at (0.75,1.75) {$2$};
	\node at (0.75,1.25) {$1$};
	\node at (0.25,1.5) {$1$};
	\node at (0.85,2.25) {$2$};	
	\node at (0.48,2.25) {$0$};
	\node at (0.3,2.7) {$2$};
	\node at (0.25,3.75) {$2$};
	\node at (0.25,3.25) {$1$};
	\node at (0.75,3.5) {$1$};
	\begin{scope}[shift={(0,4)}]
		
		\draw (1,0)--(1,1)--(0,1)--(0,0);
		\draw (0,0)--(0.5,1);\draw (0,0)--(1,1);
		
		\draw (1,1)--(1,2)--(0,2)--(0,1);
		\draw (0.5,1.5)--(1,1.5);\draw (0.5,1)--(0.5,2);
		
		\draw (1,2)--(1,3)--(0,3)--(0,2);
		\draw (0.5,2)--(1,3);	\draw (0,2)--(1,3);
		
		\draw (1,3)--(1,4)--(0,4)--(0,3);
		\draw (0.5,3)--(0.5,4);\draw (0,3.5)--(0.5,3.5);
		\node at (0.15,0.75) {$2$};	
		\node at (0.53,0.75) {$0$};
		\node at (0.7,0.3) {$2$};
		\node at (0.75,1.75) {$2$};
		\node at (0.75,1.25) {$1$};
		\node at (0.25,1.5) {$1$};
		\node at (0.85,2.25) {$2$};	
		\node at (0.48,2.25) {$0$};
		\node at (0.3,2.7) {$2$};
		\node at (0.25,3.75) {$2$};
		\node at (0.25,3.25) {$1$};
		\node at (0.75,3.5) {$1$};
	\end{scope}
	\begin{scope}[shift={(0,8)}]
		
		\draw (1,0)--(1,1)--(0,1)--(0,0);
		\draw (0,0)--(0.5,1);\draw (0,0)--(1,1);
		
		\draw (1,1)--(1,2)--(0,2)--(0,1);
		\draw (0.5,1.5)--(1,1.5);\draw (0.5,1)--(0.5,2);
		
		\draw (1,2)--(1,3)--(0,3)--(0,2);
		\draw (0.5,2)--(1,3);	\draw (0,2)--(1,3);
		
		\draw (1,3)--(1,4)--(0,4)--(0,3);
		\draw (0.5,3)--(0.5,4);\draw (0,3.5)--(0.5,3.5);
		\draw (1,4)--(1,5)--(0,5)--(0,4);\draw (0,4)--(1,5);\draw (0,4)--(0.5,5);
			\node at (0.15,4.75) {$2$};	
		\node at (0.7,4.3) {$2$};
		\node at (0.15,0.75) {$2$};	
		\node at (0.53,0.75) {$0$};
		\node at (0.7,0.3) {$2$};
		\node at (0.75,1.75) {$2$};
		\node at (0.75,1.25) {$1$};
		\node at (0.25,1.5) {$1$};
		\node at (0.85,2.25) {$2$};	
		\node at (0.48,2.25) {$0$};
		\node at (0.3,2.7) {$2$};
		\node at (0.25,3.75) {$2$};
		\node at (0.25,3.25) {$1$};
		\node at (0.75,3.5) {$1$};
	\end{scope}
\end{scope}	

\begin{scope}	[shift={(-1,0)}]		
	\draw (0,0)--(0,-0.5);
	\draw (1,0)--(1,-0.5);
	\node at (0.5,-0.3) {$\vdots$};
	
	\draw (0,0)--(1,0)--(1,1)--(0,1)--(0,0);
	\draw (0,0)--(0.5,1);\draw (0,0)--(1,1);
	
	\draw (1,1)--(1,2)--(0,2)--(0,1);
	\draw (0.5,1.5)--(1,1.5);\draw (0.5,1)--(0.5,2);
	
	\draw (1,2)--(1,3)--(0,3)--(0,2);
	\draw (0.5,2)--(1,3);	\draw (0,2)--(1,3);
	
	\draw (1,3)--(1,4)--(0,4)--(0,3);
	\draw (0.5,3)--(0.5,4);\draw (0,3.5)--(0.5,3.5);
	\node at (0.15,0.75) {$2$};	
	\node at (0.53,0.75) {$0$};
	\node at (0.7,0.3) {$2$};
	\node at (0.75,1.75) {$2$};
	\node at (0.75,1.25) {$1$};
	\node at (0.25,1.5) {$1$};
	\node at (0.85,2.25) {$2$};	
	\node at (0.48,2.25) {$0$};
	\node at (0.3,2.7) {$2$};
	\node at (0.25,3.75) {$2$};
	\node at (0.25,3.25) {$1$};
	\node at (0.75,3.5) {$1$};
	\begin{scope}[shift={(0,4)}]
		
		\draw (1,0)--(1,1)--(0,1)--(0,0);
		\draw (0,0)--(0.5,1);\draw (0,0)--(1,1);
		
		\draw (1,1)--(1,2)--(0,2)--(0,1);
		\draw (0.5,1.5)--(1,1.5);\draw (0.5,1)--(0.5,2);
		
		\draw (1,2)--(1,3)--(0,3)--(0,2);
		\draw (0.5,2)--(1,3);	\draw (0,2)--(1,3);
		
		\draw (1,3)--(1,4)--(0,4)--(0,3);
		\draw (0.5,3)--(0.5,4);\draw (0,3.5)--(0.5,3.5);
		\node at (0.15,0.75) {$2$};	
		\node at (0.53,0.75) {$0$};
		\node at (0.7,0.3) {$2$};
		\node at (0.75,1.75) {$2$};
		\node at (0.75,1.25) {$1$};
		\node at (0.25,1.5) {$1$};
		\node at (0.85,2.25) {$2$};	
		\node at (0.48,2.25) {$0$};
		\node at (0.3,2.7) {$2$};
		\node at (0.25,3.75) {$2$};
		\node at (0.25,3.25) {$1$};
		\node at (0.75,3.5) {$1$};
	\end{scope}
	\begin{scope}[shift={(0,8)}]
		
		\draw (1,0)--(1,1)--(0,1)--(0,0);
		\draw (0,0)--(0.5,1);\draw (0,0)--(1,1);
		
		\draw (1,1)--(1,2)--(0,2)--(0,1);
		\draw (0.5,1.5)--(1,1.5);\draw (0.5,1)--(0.5,2);
		
	\node at (0.75,1.75) {$2$};
\node at (0.75,1.25) {$1$};

		\node at (0.15,0.75) {$2$};	
		\node at (0.53,0.75) {$0$};
		\node at (0.7,0.3) {$2$};
		
		\node at (0.25,1.5) {$1$};
	\end{scope}
\end{scope}	

\begin{scope}	[shift={(-2,0)}]		
	\draw (0,0)--(0,-0.5);
	\draw (1,0)--(1,-0.5);
	\node at (0.5,-0.3) {$\vdots$};
	
	\draw (0,0)--(1,0)--(1,1)--(0,1)--(0,0);
	\draw (0,0)--(0.5,1);\draw (0,0)--(1,1);
	
	\draw (1,1)--(1,2)--(0,2)--(0,1);
	\draw (0.5,1.5)--(1,1.5);\draw (0.5,1)--(0.5,2);
	
	\draw (1,2)--(1,3)--(0,3)--(0,2);
	\draw (0.5,2)--(1,3);	\draw (0,2)--(1,3);
	
	\draw (1,3)--(1,4)--(0,4)--(0,3);
	\draw (0.5,3)--(0.5,4);\draw (0,3.5)--(0.5,3.5);
	\node at (0.15,0.75) {$2$};	
	\node at (0.53,0.75) {$0$};
	\node at (0.7,0.3) {$2$};
	\node at (0.75,1.75) {$2$};
	\node at (0.75,1.25) {$1$};
	\node at (0.25,1.5) {$1$};
	\node at (0.85,2.25) {$2$};	
	\node at (0.48,2.25) {$0$};
	\node at (0.3,2.7) {$2$};
	\node at (0.25,3.75) {$2$};
	\node at (0.25,3.25) {$1$};
	\node at (0.75,3.5) {$1$};
	\begin{scope}[shift={(0,4)}]
		
		\draw (1,0)--(1,1)--(0,1)--(0,0);
		\draw (0,0)--(0.5,1);\draw (0,0)--(1,1);
		
		\draw (1,1)--(1,2)--(0,2)--(0,1);
		\draw (0.5,1.5)--(1,1.5);\draw (0.5,1)--(0.5,2);
		
		\draw (1,2)--(1,3)--(0,3)--(0,2);
		\draw (0.5,2)--(1,3);	\draw (0,2)--(1,3);
		
		\draw (1,3)--(1,4)--(0,4)--(0,3);
		\draw (0.5,3)--(0.5,4);\draw (0,3.5)--(0.5,3.5);
		\node at (0.15,0.75) {$2$};	
		\node at (0.53,0.75) {$0$};
		\node at (0.7,0.3) {$2$};
		\node at (0.75,1.75) {$2$};
		\node at (0.75,1.25) {$1$};
		\node at (0.25,1.5) {$1$};
		\node at (0.85,2.25) {$2$};	
		\node at (0.48,2.25) {$0$};
		\node at (0.3,2.7) {$2$};
		\node at (0.25,3.75) {$2$};
		\node at (0.25,3.25) {$1$};
		\node at (0.75,3.5) {$1$};
	\end{scope}
	\begin{scope}[shift={(0,8)}]
		
		\draw (1,0)--(1,1)--(0,1)--(0,0);\draw (0,0)--(1,1);
		\node at (0.7,0.3) {$2$};
	\end{scope}
\end{scope}	
\begin{scope}	[shift={(-3,0)}]		
	\draw (0,0)--(0,-0.5);
	\draw (1,0)--(1,-0.5);
	\node at (0.5,-0.3) {$\vdots$};
	
	\draw (0,0)--(1,0)--(1,1)--(0,1)--(0,0);
	\draw (0,0)--(0.5,1);\draw (0,0)--(1,1);
	
	\draw (1,1)--(1,2)--(0,2)--(0,1);
	\draw (0.5,1.5)--(1,1.5);\draw (0.5,1)--(0.5,2);
	
	\draw (1,2)--(1,3)--(0,3)--(0,2);
	\draw (0.5,2)--(1,3);	\draw (0,2)--(1,3);
	
	\draw (1,3)--(1,4)--(0,4)--(0,3);
	\draw (0.5,3)--(0.5,4);\draw (0,3.5)--(0.5,3.5);
	\node at (0.15,0.75) {$2$};	
	\node at (0.53,0.75) {$0$};
	\node at (0.7,0.3) {$2$};
	\node at (0.75,1.75) {$2$};
	\node at (0.75,1.25) {$1$};
	\node at (0.25,1.5) {$1$};
	\node at (0.85,2.25) {$2$};	
	\node at (0.48,2.25) {$0$};
	\node at (0.3,2.7) {$2$};
	\node at (0.25,3.75) {$2$};
	\node at (0.25,3.25) {$1$};
	\node at (0.75,3.5) {$1$};
	\begin{scope}[shift={(0,4)}]
		
		\draw (1,0)--(1,1)--(0,1)--(0,0);
		\draw (0,0)--(0.5,1);\draw (0,0)--(1,1);
		
		\draw (1,1)--(1,2)--(0,2)--(0,1);
		\draw (0.5,1.5)--(1,1.5);\draw (0.5,1)--(0.5,2);
		
		\draw (1,2)--(1,3)--(0,3)--(0,2);
		\draw (0.5,2)--(1,3);	\draw (0,2)--(1,3);
		
		\draw (1,3)--(1,4)--(0,4)--(0,3);
		\draw (0.5,3)--(0.5,4);\draw (0,3.5)--(0.5,3.5);
		\node at (0.15,0.75) {$2$};	
		\node at (0.53,0.75) {$0$};
		\node at (0.7,0.3) {$2$};
		\node at (0.75,1.75) {$2$};
		\node at (0.75,1.25) {$1$};
		\node at (0.25,1.5) {$1$};
		\node at (0.48,2.25) {$0$};
		\node at (0.3,2.7) {$2$};
		\node at (0.25,3.25) {$1$};
	\end{scope}

\end{scope}	

\begin{scope}	[shift={(-4,0)}]		
	\draw (0,0)--(0,-0.5);
	\draw (1,0)--(1,-0.5);
	\node at (0.5,-0.3) {$\vdots$};
	
	\draw (0,0)--(1,0)--(1,1)--(0,1)--(0,0);
	\draw (0,0)--(0.5,1);\draw (0,0)--(1,1);
	
	\draw (1,1)--(1,2)--(0,2)--(0,1);
	\draw (0.5,1.5)--(1,1.5);\draw (0.5,1)--(0.5,2);
	
	\draw (1,2)--(1,3)--(0,3)--(0,2);
	\draw (0.5,2)--(1,3);	\draw (0,2)--(1,3);
	
	\draw (1,3)--(1,4)--(0,4)--(0,3);
	\draw (0.5,3)--(0.5,4);\draw (0,3.5)--(0.5,3.5);
	\node at (0.15,0.75) {$2$};	
	\node at (0.53,0.75) {$0$};
	\node at (0.7,0.3) {$2$};
	\node at (0.75,1.75) {$2$};
	\node at (0.75,1.25) {$1$};
	\node at (0.25,1.5) {$1$};
	\node at (0.85,2.25) {$2$};	
	\node at (0.48,2.25) {$0$};
	\node at (0.3,2.7) {$2$};
	\node at (0.25,3.75) {$2$};
	\node at (0.25,3.25) {$1$};
	\node at (0.75,3.5) {$1$};
	\begin{scope}[shift={(0,4)}]
		
		\draw (1,0)--(1,1)--(0,1)--(0,0);
		\draw (0,0)--(0.5,1);\draw (0,0)--(1,1);
		\node at (0.53,0.75) {$0$};
		\node at (0.7,0.3) {$2$};
	\end{scope}
	
\end{scope}	

\begin{scope}	[shift={(-5,0)}]		
	\draw (0,0)--(0,-0.5);
	\draw (1,0)--(1,-0.5);
	\node at (0.5,-0.3) {$\vdots$};
	
	\draw (0,0)--(1,0)--(1,1)--(0,1)--(0,0);
	\draw (0,0)--(0.5,1);\draw (0,0)--(1,1);
	
	\draw (1,1)--(1,2)--(0,2)--(0,1);
	\draw (0.5,1.5)--(1,1.5);\draw (0.5,1)--(0.5,2);
	
	\draw (1,2)--(1,3)--(0,3)--(0,2);
	\draw (0.5,2)--(1,3);	\draw (0,2)--(1,3);
	
	\draw (1,3)--(1,4)--(0,4)--(0,3);
	\draw (0.5,3)--(0.5,4);\draw (0,3.5)--(0.5,3.5);
		\node at (0.53,0.75) {$0$};
	\node at (0.15,0.75) {$2$};	
	\node at (0.7,0.3) {$2$};
	\node at (0.75,1.75) {$2$};
	\node at (0.75,1.25) {$1$};
	\node at (0.25,1.5) {$1$};
	\node at (0.85,2.25) {$2$};	

	\node at (0.3,2.7) {$2$};

	\node at (0.75,3.5) {$1$};
	\begin{scope}[shift={(0,4)}]
		
		\draw (1,0)--(1,1)--(0,1)--(0,0);
		\draw (0,0)--(0.5,1);\draw (0,0)--(1,1);
		\node at (0.53,0.75) {$0$};
	\end{scope}
	
\end{scope}	

\begin{scope}	[shift={(-6,0)}]		
	\draw (0,0)--(0,-0.5);
	\draw (1,0)--(1,-0.5);
	\node at (0.5,-0.3) {$\vdots$};
	
	\draw (0,0)--(1,0)--(1,1)--(0,1)--(0,0);
	\draw (0,0)--(0.5,1);\draw (0,0)--(1,1);
	
	\draw (1,1)--(1,2)--(0,2)--(0,1);
	\draw (0.5,1.5)--(1,1.5);\draw (0.5,1)--(0.5,2);
	\node at (0.53,0.75) {$0$};
	\node at (0.7,0.3) {$2$};
	\node at (0.75,1.75) {$2$};
	\node at (0.75,1.25) {$1$};

	\draw (0,0)--(-0.5,0);
\draw (0,1)--(-0.5,1);
\draw (0,2)--(-0.5,2);

\node at (-0.4,0.5) {$\cdots$};
\node at (-0.4,1.5) {$\cdots$};
\node at (3.5,-1.2) {(b)};
\end{scope}	

\end{scope}
			\end{tikzpicture}
	\end{center}
	\begin{center}
	\begin{tikzpicture}[scale=0.7]
		\begin{scope}	[shift={(0,0)}]		
			\draw (0,0)--(0,-0.5);
			\draw (1,0)--(1,-0.5);
			\node at (0.5,-0.3) {$\vdots$};
			
			\draw (0,0)--(1,0)--(1,1)--(0,1)--(0,0);
			\draw (0,0)--(0.5,1);\draw (0,0)--(1,1);
			
			\draw (1,1)--(1,2)--(0,2)--(0,1);
			\draw (0.5,1.5)--(1,1.5);\draw (0.5,1)--(0.5,2);
			
			\draw (1,2)--(1,3)--(0,3)--(0,2);
			\draw (0.5,2)--(1,3);	\draw (0,2)--(1,3);
			
			\draw (1,3)--(1,4)--(0,4)--(0,3);
			\draw (0.5,3)--(0.5,4);\draw (0,3.5)--(0.5,3.5);
			\node at (0.15,0.75) {$2$};	
			\node at (0.53,0.75) {$0$};
			\node at (0.7,0.3) {$2$};
			\node at (0.75,1.75) {$2$};
			\node at (0.75,1.25) {$1$};
			\node at (0.25,1.5) {$1$};
			\node at (0.85,2.25) {$2$};	
			\node at (0.48,2.25) {$0$};
			\node at (0.3,2.7) {$2$};
			\node at (0.25,3.75) {$2$};
			\node at (0.25,3.25) {$1$};
			\node at (0.75,3.5) {$1$};
			\begin{scope}[shift={(0,4)}]
				
				\draw (1,0)--(1,1)--(0,1)--(0,0);
				\draw (0,0)--(0.5,1);\draw (0,0)--(1,1);
				
				\draw (1,1)--(1,2)--(0,2)--(0,1);
				\draw (0.5,1.5)--(1,1.5);\draw (0.5,1)--(0.5,2);
				
				\draw (1,2)--(1,3)--(0,3)--(0,2);
				\draw (0.5,2)--(1,3);	\draw (0,2)--(1,3);
				
				\draw (1,3)--(1,4)--(0,4)--(0,3);
				\draw (0.5,3)--(0.5,4);\draw (0,3.5)--(0.5,3.5);
				\node at (0.15,0.75) {$2$};	
				\node at (0.53,0.75) {$0$};
				\node at (0.7,0.3) {$2$};
				\node at (0.75,1.75) {$2$};
				\node at (0.75,1.25) {$1$};
				\node at (0.25,1.5) {$1$};
				\node at (0.85,2.25) {$2$};	
				\node at (0.48,2.25) {$0$};
				\node at (0.3,2.7) {$2$};
				\node at (0.25,3.75) {$2$};
				\node at (0.25,3.25) {$1$};
				\node at (0.75,3.5) {$1$};
			\end{scope}
			\begin{scope}[shift={(0,8)}]
				
				\draw (1,0)--(1,1)--(0,1)--(0,0);
				\draw (0,0)--(0.5,1);\draw (0,0)--(1,1);
				
				\draw (1,1)--(1,2)--(0,2)--(0,1);
				\draw (0.5,1.5)--(1,1.5);\draw (0.5,1)--(0.5,2);
				
				\draw (1,2)--(1,3)--(0,3)--(0,2);
				\draw (0.5,2)--(1,3);	\draw (0,2)--(1,3);
				
				\draw (1,3)--(1,4)--(0,4)--(0,3);
				\draw (0.5,3)--(0.5,4);\draw (0,3.5)--(0.5,3.5);
	
				\node at (0.15,0.75) {$2$};	
				\node at (0.53,0.75) {$0$};
				\node at (0.7,0.3) {$2$};
				\node at (0.75,1.75) {$2$};
				\node at (0.75,1.25) {$1$};
				\node at (0.25,1.5) {$1$};
				\node at (0.85,2.25) {$2$};	
				\node at (0.48,2.25) {$0$};
				\node at (0.3,2.7) {$2$};
				\node at (0.25,3.25) {$1$};
				\node at (0.75,3.5) {$1$};
			\end{scope}
		\end{scope}	
		
		\begin{scope}	[shift={(-1,0)}]		
			\draw (0,0)--(0,-0.5);
			\draw (1,0)--(1,-0.5);
			\node at (0.5,-0.3) {$\vdots$};
			
			\draw (0,0)--(1,0)--(1,1)--(0,1)--(0,0);
			\draw (0,0)--(0.5,1);\draw (0,0)--(1,1);
			
			\draw (1,1)--(1,2)--(0,2)--(0,1);
			\draw (0.5,1.5)--(1,1.5);\draw (0.5,1)--(0.5,2);
			
			\draw (1,2)--(1,3)--(0,3)--(0,2);
			\draw (0.5,2)--(1,3);	\draw (0,2)--(1,3);
			
			\draw (1,3)--(1,4)--(0,4)--(0,3);
			\draw (0.5,3)--(0.5,4);\draw (0,3.5)--(0.5,3.5);
			\node at (0.15,0.75) {$2$};	
			\node at (0.53,0.75) {$0$};
			\node at (0.7,0.3) {$2$};
			\node at (0.75,1.75) {$2$};
			\node at (0.75,1.25) {$1$};
			\node at (0.25,1.5) {$1$};
			\node at (0.85,2.25) {$2$};	
			\node at (0.48,2.25) {$0$};
			\node at (0.3,2.7) {$2$};
			\node at (0.25,3.75) {$2$};
			\node at (0.25,3.25) {$1$};
			\node at (0.75,3.5) {$1$};
			\begin{scope}[shift={(0,4)}]
				
				\draw (1,0)--(1,1)--(0,1)--(0,0);
				\draw (0,0)--(0.5,1);\draw (0,0)--(1,1);
				
				\draw (1,1)--(1,2)--(0,2)--(0,1);
				\draw (0.5,1.5)--(1,1.5);\draw (0.5,1)--(0.5,2);
				
				\draw (1,2)--(1,3)--(0,3)--(0,2);
				\draw (0.5,2)--(1,3);	\draw (0,2)--(1,3);
				
				\draw (1,3)--(1,4)--(0,4)--(0,3);
				\draw (0.5,3)--(0.5,4);\draw (0,3.5)--(0.5,3.5);
				\node at (0.15,0.75) {$2$};	
				\node at (0.53,0.75) {$0$};
				\node at (0.7,0.3) {$2$};
				\node at (0.75,1.75) {$2$};
				\node at (0.75,1.25) {$1$};
				\node at (0.25,1.5) {$1$};
				\node at (0.85,2.25) {$2$};	
				\node at (0.48,2.25) {$0$};
				\node at (0.3,2.7) {$2$};
				\node at (0.25,3.75) {$2$};
				\node at (0.25,3.25) {$1$};
				\node at (0.75,3.5) {$1$};
			\end{scope}
			\begin{scope}[shift={(0,8)}]
				
				\draw (1,0)--(1,1)--(0,1)--(0,0);
				\draw (0,0)--(0.5,1);\draw (0,0)--(1,1);
				\node at (0.53,0.75) {$0$};
				\node at (0.7,0.3) {$2$};

			\end{scope}
		\end{scope}	
		
	\begin{scope}	[shift={(-2,0)}]		
		\draw (0,0)--(0,-0.5);
		\draw (1,0)--(1,-0.5);
		\node at (0.5,-0.3) {$\vdots$};
		
		\draw (0,0)--(1,0)--(1,1)--(0,1)--(0,0);
		\draw (0,0)--(0.5,1);\draw (0,0)--(1,1);
		
		\draw (1,1)--(1,2)--(0,2)--(0,1);
		\draw (0.5,1.5)--(1,1.5);\draw (0.5,1)--(0.5,2);
		
		\draw (1,2)--(1,3)--(0,3)--(0,2);
		\draw (0.5,2)--(1,3);	\draw (0,2)--(1,3);
		
		\draw (1,3)--(1,4)--(0,4)--(0,3);
		\draw (0.5,3)--(0.5,4);\draw (0,3.5)--(0.5,3.5);
		\node at (0.15,0.75) {$2$};	
		\node at (0.53,0.75) {$0$};
		\node at (0.7,0.3) {$2$};
		\node at (0.75,1.75) {$2$};
		\node at (0.75,1.25) {$1$};
		\node at (0.25,1.5) {$1$};
		\node at (0.85,2.25) {$2$};	
		\node at (0.48,2.25) {$0$};
		\node at (0.3,2.7) {$2$};
		\node at (0.25,3.75) {$2$};
		\node at (0.25,3.25) {$1$};
		\node at (0.75,3.5) {$1$};
		\begin{scope}[shift={(0,4)}]
			
			\draw (1,0)--(1,1)--(0,1)--(0,0);
			\draw (0,0)--(0.5,1);\draw (0,0)--(1,1);
			
			\draw (1,1)--(1,2)--(0,2)--(0,1);
			\draw (0.5,1.5)--(1,1.5);\draw (0.5,1)--(0.5,2);
			
			\draw (1,2)--(1,3)--(0,3)--(0,2);
			\draw (0.5,2)--(1,3);	\draw (0,2)--(1,3);

			\node at (0.15,0.75) {$2$};	
			\node at (0.53,0.75) {$0$};
			\node at (0.7,0.3) {$2$};
			\node at (0.75,1.25) {$1$};
			\node at (0.25,1.5) {$1$};
			\node at (0.48,2.25) {$0$};

		\end{scope}
	
	\end{scope}	
		
	\begin{scope}	[shift={(-3,0)}]		
		\draw (0,0)--(0,-0.5);
		\draw (1,0)--(1,-0.5);
		\node at (0.5,-0.3) {$\vdots$};
		
		\draw (0,0)--(1,0)--(1,1)--(0,1)--(0,0);
		\draw (0,0)--(0.5,1);\draw (0,0)--(1,1);
		
		\draw (1,1)--(1,2)--(0,2)--(0,1);
		\draw (0.5,1.5)--(1,1.5);\draw (0.5,1)--(0.5,2);
		
		\draw (1,2)--(1,3)--(0,3)--(0,2);
		\draw (0.5,2)--(1,3);	\draw (0,2)--(1,3);
		
		\draw (1,3)--(1,4)--(0,4)--(0,3);
		\draw (0.5,3)--(0.5,4);\draw (0,3.5)--(0.5,3.5);
		\node at (0.15,0.75) {$2$};	
		\node at (0.53,0.75) {$0$};
		\node at (0.7,0.3) {$2$};
		\node at (0.75,1.75) {$2$};
		\node at (0.75,1.25) {$1$};
		\node at (0.25,1.5) {$1$};
		\node at (0.85,2.25) {$2$};	
		\node at (0.48,2.25) {$0$};
		\node at (0.3,2.7) {$2$};
		\node at (0.25,3.75) {$2$};
		\node at (0.25,3.25) {$1$};
		\node at (0.75,3.5) {$1$};
		\begin{scope}[shift={(0,4)}]
			
			\draw (1,0)--(1,1)--(0,1)--(0,0);
			\draw (0,0)--(0.5,1);\draw (0,0)--(1,1);
			
			\draw (1,1)--(1,2)--(0,2)--(0,1);
			\draw (0.5,1.5)--(1,1.5);\draw (0.5,1)--(0.5,2);

			\node at (0.15,0.75) {$2$};	
			\node at (0.53,0.75) {$0$};
			\node at (0.7,0.3) {$2$};
			\node at (0.75,1.25) {$1$};

		\end{scope}
		
	\end{scope}

		\begin{scope}	[shift={(-4,0)}]		
			\draw (0,0)--(0,-0.5);
			\draw (1,0)--(1,-0.5);
			\node at (0.5,-0.3) {$\vdots$};
			
			\draw (0,0)--(1,0)--(1,1)--(0,1)--(0,0);
			\draw (0,0)--(0.5,1);\draw (0,0)--(1,1);
			
			\draw (1,1)--(1,2)--(0,2)--(0,1);
			\draw (0.5,1.5)--(1,1.5);\draw (0.5,1)--(0.5,2);
			
			\draw (1,2)--(1,3)--(0,3)--(0,2);
			\draw (0.5,2)--(1,3);	\draw (0,2)--(1,3);

			\node at (0.15,0.75) {$2$};	
			\node at (0.53,0.75) {$0$};
			\node at (0.7,0.3) {$2$};
			\node at (0.75,1.25) {$1$};
			\node at (0.25,1.5) {$1$};
			\node at (0.48,2.25) {$0$};
			
		\end{scope}	
		
		\begin{scope}	[shift={(-5,0)}]		
			\draw (0,0)--(0,-0.5);
			\draw (1,0)--(1,-0.5);
			\node at (0.5,-0.3) {$\vdots$};
			
			\draw (0,0)--(1,0)--(1,1)--(0,1)--(0,0);
			\draw (0,0)--(0.5,1);\draw (0,0)--(1,1);
			
			\draw (1,1)--(1,2)--(0,2)--(0,1);
			\draw (0.5,1.5)--(1,1.5);\draw (0.5,1)--(0.5,2);
			
			\draw (1,2)--(1,3)--(0,3)--(0,2);
			\draw (0.5,2)--(1,3);	\draw (0,2)--(1,3);

			\node at (0.15,0.75) {$2$};	
			
			\node at (0.7,0.3) {$2$};
			
			\node at (0.25,1.5) {$1$};
			\node at (0.48,2.25) {$0$};
			
			\draw (0,0)--(-0.5,0);
			\draw (0,1)--(-0.5,1);
			\draw (0,2)--(-0.5,2);
			\draw (0,3)--(-0.5,3);
			
			\node at (-0.4,0.5) {$\cdots$};
			\node at (-0.4,1.5) {$\cdots$};
			\node at (-0.4,2.5) {$\cdots$};
		\node at (3,-1.2) {(c)};	
		\end{scope}	
		\begin{scope}	[shift={(9,0)}]	
			\begin{scope}	[shift={(0,0)}]		
				\draw (0,0)--(0,-0.5);
				\draw (1,0)--(1,-0.5);
				\node at (0.5,-0.3) {$\vdots$};
				
				\draw (0,0)--(1,0)--(1,1)--(0,1)--(0,0);
				\draw (0,0)--(0.5,1);\draw (0,0)--(1,1);
				
				\draw (1,1)--(1,2)--(0,2)--(0,1);
				\draw (0.5,1.5)--(1,1.5);\draw (0.5,1)--(0.5,2);
				
				\draw (1,2)--(1,3)--(0,3)--(0,2);
				\draw (0.5,2)--(1,3);	\draw (0,2)--(1,3);
				
				\draw (1,3)--(1,4)--(0,4)--(0,3);
				\draw (0.5,3)--(0.5,4);\draw (0,3.5)--(0.5,3.5);
				\node at (0.15,0.75) {$2$};	
				\node at (0.53,0.75) {$0$};
				\node at (0.7,0.3) {$2$};
				\node at (0.75,1.75) {$2$};
				\node at (0.75,1.25) {$1$};
				\node at (0.25,1.5) {$1$};
				\node at (0.85,2.25) {$2$};	
				\node at (0.48,2.25) {$0$};
				\node at (0.3,2.7) {$2$};
				\node at (0.25,3.75) {$2$};
				\node at (0.25,3.25) {$1$};
				\node at (0.75,3.5) {$1$};
				\begin{scope}[shift={(0,4)}]
					
					\draw (1,0)--(1,1)--(0,1)--(0,0);
					\draw (0,0)--(0.5,1);\draw (0,0)--(1,1);
					
					\draw (1,1)--(1,2)--(0,2)--(0,1);
					\draw (0.5,1.5)--(1,1.5);\draw (0.5,1)--(0.5,2);
					
					\draw (1,2)--(1,3)--(0,3)--(0,2);
					\draw (0.5,2)--(1,3);	\draw (0,2)--(1,3);
					
					\draw (1,3)--(1,4)--(0,4)--(0,3);
					\draw (0.5,3)--(0.5,4);\draw (0,3.5)--(0.5,3.5);
					\node at (0.15,0.75) {$2$};	
					\node at (0.53,0.75) {$0$};
					\node at (0.7,0.3) {$2$};
					\node at (0.75,1.75) {$2$};
					\node at (0.75,1.25) {$1$};
					\node at (0.25,1.5) {$1$};
					\node at (0.85,2.25) {$2$};	
					\node at (0.48,2.25) {$0$};
					\node at (0.3,2.7) {$2$};
					\node at (0.25,3.75) {$2$};
					\node at (0.25,3.25) {$1$};
					\node at (0.75,3.5) {$1$};
				\end{scope}
				\begin{scope}[shift={(0,8)}]
					
					\draw (1,0)--(1,1)--(0,1)--(0,0);
					\draw (0,0)--(0.5,1);\draw (0,0)--(1,1);
					
					\draw (1,1)--(1,2)--(0,2)--(0,1);
					\draw (0.5,1.5)--(1,1.5);\draw (0.5,1)--(0.5,2);
					
					\draw (1,2)--(1,3)--(0,3)--(0,2);
					\draw (0.5,2)--(1,3);	\draw (0,2)--(1,3);
					
					\draw (1,3)--(1,4)--(0,4)--(0,3);
					\draw (0.5,3)--(0.5,4);\draw (0,3.5)--(0.5,3.5);
					\draw (1,4)--(1,5)--(0,5)--(0,4);\draw (0,4)--(1,5);\draw (0,4)--(0.5,5);
					\draw (1,5)--(1,6)--(0,6)--(0,5);\draw (0.5,5)--(0.5,6);
						\node at (0.25,5.5) {$1$};	
					\node at (0.15,4.75) {$2$};	
					\node at (0.7,4.3) {$2$};
					\node at (0.15,0.75) {$2$};	
					\node at (0.53,0.75) {$0$};
					\node at (0.7,0.3) {$2$};
					\node at (0.75,1.75) {$2$};
					\node at (0.75,1.25) {$1$};
					\node at (0.25,1.5) {$1$};
					\node at (0.85,2.25) {$2$};	
					\node at (0.48,2.25) {$0$};
					\node at (0.3,2.7) {$2$};
					\node at (0.25,3.75) {$2$};
					\node at (0.25,3.25) {$1$};
					\node at (0.75,3.5) {$1$};
				\end{scope}
			\end{scope}	
			
		\begin{scope}	[shift={(-1,0)}]		
			\draw (0,0)--(0,-0.5);
			\draw (1,0)--(1,-0.5);
			\node at (0.5,-0.3) {$\vdots$};
			
			\draw (0,0)--(1,0)--(1,1)--(0,1)--(0,0);
			\draw (0,0)--(0.5,1);\draw (0,0)--(1,1);
			
			\draw (1,1)--(1,2)--(0,2)--(0,1);
			\draw (0.5,1.5)--(1,1.5);\draw (0.5,1)--(0.5,2);
			
			\draw (1,2)--(1,3)--(0,3)--(0,2);
			\draw (0.5,2)--(1,3);	\draw (0,2)--(1,3);
			
			\draw (1,3)--(1,4)--(0,4)--(0,3);
			\draw (0.5,3)--(0.5,4);\draw (0,3.5)--(0.5,3.5);
			\node at (0.15,0.75) {$2$};	
			\node at (0.53,0.75) {$0$};
			\node at (0.7,0.3) {$2$};
			\node at (0.75,1.75) {$2$};
			\node at (0.75,1.25) {$1$};
			\node at (0.25,1.5) {$1$};
			\node at (0.85,2.25) {$2$};	
			\node at (0.48,2.25) {$0$};
			\node at (0.3,2.7) {$2$};
			\node at (0.25,3.75) {$2$};
			\node at (0.25,3.25) {$1$};
			\node at (0.75,3.5) {$1$};
			\begin{scope}[shift={(0,4)}]
				
				\draw (1,0)--(1,1)--(0,1)--(0,0);
				\draw (0,0)--(0.5,1);\draw (0,0)--(1,1);
				
				\draw (1,1)--(1,2)--(0,2)--(0,1);
				\draw (0.5,1.5)--(1,1.5);\draw (0.5,1)--(0.5,2);
				
				\draw (1,2)--(1,3)--(0,3)--(0,2);
				\draw (0.5,2)--(1,3);	\draw (0,2)--(1,3);
				
				\draw (1,3)--(1,4)--(0,4)--(0,3);
				\draw (0.5,3)--(0.5,4);\draw (0,3.5)--(0.5,3.5);
				\node at (0.15,0.75) {$2$};	
				\node at (0.53,0.75) {$0$};
				\node at (0.7,0.3) {$2$};
				\node at (0.75,1.75) {$2$};
				\node at (0.75,1.25) {$1$};
				\node at (0.25,1.5) {$1$};
				\node at (0.85,2.25) {$2$};	
				\node at (0.48,2.25) {$0$};
				\node at (0.3,2.7) {$2$};
				\node at (0.25,3.75) {$2$};
				\node at (0.25,3.25) {$1$};
				\node at (0.75,3.5) {$1$};
			\end{scope}
			\begin{scope}[shift={(0,8)}]
				
				\draw (1,0)--(1,1)--(0,1)--(0,0);
				\draw (0,0)--(0.5,1);\draw (0,0)--(1,1);
				
				\draw (1,1)--(1,2)--(0,2)--(0,1);
				\draw (0.5,1.5)--(1,1.5);\draw (0.5,1)--(0.5,2);
				
				\draw (1,2)--(1,3)--(0,3)--(0,2);
				\draw (0.5,2)--(1,3);	\draw (0,2)--(1,3);

				\node at (0.15,0.75) {$2$};	
				\node at (0.53,0.75) {$0$};
				\node at (0.7,0.3) {$2$};
				\node at (0.75,1.75) {$2$};
				\node at (0.75,1.25) {$1$};
				\node at (0.25,1.5) {$1$};

				\node at (0.48,2.25) {$0$};

			\end{scope}
		\end{scope}	
			
			\begin{scope}	[shift={(-2,0)}]		
				\draw (0,0)--(0,-0.5);
				\draw (1,0)--(1,-0.5);
				\node at (0.5,-0.3) {$\vdots$};
				
				\draw (0,0)--(1,0)--(1,1)--(0,1)--(0,0);
				\draw (0,0)--(0.5,1);\draw (0,0)--(1,1);
				
				\draw (1,1)--(1,2)--(0,2)--(0,1);
				\draw (0.5,1.5)--(1,1.5);\draw (0.5,1)--(0.5,2);
				
				\draw (1,2)--(1,3)--(0,3)--(0,2);
				\draw (0.5,2)--(1,3);	\draw (0,2)--(1,3);
				
				\draw (1,3)--(1,4)--(0,4)--(0,3);
				\draw (0.5,3)--(0.5,4);\draw (0,3.5)--(0.5,3.5);
				\node at (0.15,0.75) {$2$};	
				\node at (0.53,0.75) {$0$};
				\node at (0.7,0.3) {$2$};
				\node at (0.75,1.75) {$2$};
				\node at (0.75,1.25) {$1$};
				\node at (0.25,1.5) {$1$};
				\node at (0.85,2.25) {$2$};	
				\node at (0.48,2.25) {$0$};
				\node at (0.3,2.7) {$2$};
				\node at (0.25,3.75) {$2$};
				\node at (0.25,3.25) {$1$};
				\node at (0.75,3.5) {$1$};
				\begin{scope}[shift={(0,4)}]
					
					\draw (1,0)--(1,1)--(0,1)--(0,0);
					\draw (0,0)--(0.5,1);\draw (0,0)--(1,1);
					
					\draw (1,1)--(1,2)--(0,2)--(0,1);
					\draw (0.5,1.5)--(1,1.5);\draw (0.5,1)--(0.5,2);
					
					\draw (1,2)--(1,3)--(0,3)--(0,2);
					\draw (0.5,2)--(1,3);	\draw (0,2)--(1,3);

					\node at (0.15,0.75) {$2$};	
					\node at (0.53,0.75) {$0$};
					\node at (0.7,0.3) {$2$};
					\node at (0.75,1.75) {$2$};
					\node at (0.75,1.25) {$1$};
					\node at (0.25,1.5) {$1$};
					\node at (0.85,2.25) {$2$};	
					\node at (0.3,2.7) {$2$};
	
				\end{scope}

			\end{scope}	
			\begin{scope}	[shift={(-3,0)}]		
				\draw (0,0)--(0,-0.5);
				\draw (1,0)--(1,-0.5);
				\node at (0.5,-0.3) {$\vdots$};
				
				\draw (0,0)--(1,0)--(1,1)--(0,1)--(0,0);
				\draw (0,0)--(0.5,1);\draw (0,0)--(1,1);
				
				\draw (1,1)--(1,2)--(0,2)--(0,1);
				\draw (0.5,1.5)--(1,1.5);\draw (0.5,1)--(0.5,2);
				
				\draw (1,2)--(1,3)--(0,3)--(0,2);
				\draw (0.5,2)--(1,3);	\draw (0,2)--(1,3);
				
				\draw (1,3)--(1,4)--(0,4)--(0,3);
				\draw (0.5,3)--(0.5,4);\draw (0,3.5)--(0.5,3.5);
				\node at (0.15,0.75) {$2$};	
				\node at (0.53,0.75) {$0$};
				\node at (0.7,0.3) {$2$};
				\node at (0.75,1.75) {$2$};
				\node at (0.75,1.25) {$1$};
				\node at (0.25,1.5) {$1$};
				\node at (0.85,2.25) {$2$};	
				\node at (0.48,2.25) {$0$};
				\node at (0.3,2.7) {$2$};
				\node at (0.25,3.75) {$2$};
				\node at (0.25,3.25) {$1$};
				\node at (0.75,3.5) {$1$};
				\begin{scope}[shift={(0,4)}]
					
					\draw (1,0)--(1,1)--(0,1)--(0,0);
					\draw (0,0)--(0.5,1);\draw (0,0)--(1,1);

					\node at (0.53,0.75) {$0$};
					\node at (0.7,0.3) {$2$};

				\end{scope}
				
			\end{scope}	
			
			\begin{scope}	[shift={(-4,0)}]		
				\draw (0,0)--(0,-0.5);
				\draw (1,0)--(1,-0.5);
				\node at (0.5,-0.3) {$\vdots$};
				
				\draw (0,0)--(1,0)--(1,1)--(0,1)--(0,0);
				\draw (0,0)--(0.5,1);\draw (0,0)--(1,1);
				
				\draw (1,1)--(1,2)--(0,2)--(0,1);
				\draw (0.5,1.5)--(1,1.5);\draw (0.5,1)--(0.5,2);
				
				\draw (1,2)--(1,3)--(0,3)--(0,2);
				\draw (0.5,2)--(1,3);	\draw (0,2)--(1,3);
				
				\draw (1,3)--(1,4)--(0,4)--(0,3);
				\draw (0.5,3)--(0.5,4);\draw (0,3.5)--(0.5,3.5);
				\node at (0.15,0.75) {$2$};	
				\node at (0.53,0.75) {$0$};
				\node at (0.7,0.3) {$2$};
				\node at (0.75,1.75) {$2$};
				\node at (0.75,1.25) {$1$};
				\node at (0.25,1.5) {$1$};
		
				\node at (0.48,2.25) {$0$};
				\node at (0.3,2.7) {$2$};
				\node at (0.25,3.75) {$2$};
				\node at (0.25,3.25) {$1$};
		
			\draw (0,0)--(-0.5,0);
		\draw (0,1)--(-0.5,1);
		\draw (0,2)--(-0.5,2);
		\draw (0,3)--(-0.5,3);
			\draw (0,4)--(-0.5,4);
		\node at (-0.4,0.5) {$\cdots$};
		\node at (-0.4,1.5) {$\cdots$};
		\node at (-0.4,2.5) {$\cdots$};
			\node at (-0.4,3.5) {$\cdots$};	
					\node at (2.5,-1.2) {(d)};		
			\end{scope}	
	
		\end{scope}
	\end{tikzpicture}
\end{center}
\end{Ex}

\section{Young wall construction of the level-1 highest weight crystals}\label{Young wall realization}

For a Young wall $Y=(\cdots,y_1,y_0)$, we define the \textit{pre $i$-signature of $Y$}  by 
$$(\cdots,\text{sign}_i(y_1),\text{sign}_i(y_0)).$$

We cancel every $(+,-)$ pair in $(\cdots,\text{sign}_i(y_1),\text{sign}_i(y_0))$ to obtain a finite sequence of $-$'s follows $+$'s, reading from left to right. This sequence is called the \textit{$i$-signature of $Y$}.

\medskip

We define $\tilde{E}_i Y$ to be the Young wall obtained from $Y$
by removing the $i$-block corresponding to the right-most $-$ in the
$i$-signature of $Y$.
We define $\tilde{E}_i Y=0$ if there exists no $-$ in the $i$-signature of $Y$.

\medskip

We define $\tilde{F}_i Y$ to be the Young wall obtained from $Y$
by adding an $i$-block to the column corresponding to the left-most $+$
in the $i$-signature of $Y$.
We define $\tilde{F}_i Y = 0$ if there exists no $+$ in the $i$-signature of $Y$.

\medskip

The above rule is called the \textit{tensor product rule} for Young walls.

\vskip 2mm

Let $\mathcal Y(\lambda)$ ($\lambda(c)=1$) be the set of all reduced Young walls. We define the maps

\begin{equation*}
	\wt : \mathcal Y(\lambda) \longrightarrow P,\quad 
	\epsilon_i : \mathcal Y(\lambda) \longrightarrow \Z, \quad 
	\phi_i : \mathcal Y(\lambda) \longrightarrow \Z
\end{equation*}
by
\begin{equation*}
	\begin{aligned}\mbox{}
		\wt(Y) &= \lambda - \sum_{i\in I} k_i \alpha_i,\\
		\epsilon_i(Y) &= \text{the number of $-$'s in the $i$-signature of $Y$,}\\
		\phi_i(Y) &= \text{the number of $+$'s in the $i$-signature of $Y$,}
	\end{aligned}
\end{equation*}
where $k_i$ is the number of $i$-blocks in $Y$ that have been added to
the ground-state wall $Y_{\lambda}$.

\begin{Prop}\label{prop:close}
	For any $Y\in\mathcal Y(\lambda)$, we have
	\begin{equation*}
		\tilde{E}_iY\in \mathcal Y(\lambda)\cup \{0\},\quad \tilde{F}_iY\in\mathcal Y(\lambda)\cup \{0\}. 
	\end{equation*}
\end{Prop}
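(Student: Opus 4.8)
The plan is to reduce the statement to a local analysis, since applying $\tilde{F}_i$ (or $\tilde{E}_i$) to a reduced Young wall $Y=(\cdots,y_1,y_0)$ changes exactly one column, say $y_j$, into a new column $y_j'$, and affects only the two adjacent pairs $(y_{j+1},y_j)$ and $(y_j,y_{j-1})$; every other column and every other adjacent pair is untouched, and the finite-support condition (agreement with the ground-state wall for $k\gg 0$) is clearly preserved. There are then two things to check: first, that $\tilde{F}_iY$ (when nonzero) again satisfies the four conditions of Definition \ref{def:Young wall}; and second, that it is again reduced in the sense of Definition \ref{def:reduced}. The input that makes both checks uniform is the identity
\begin{equation*}
	y_j'\big({|y_j'|}_0\big)=\tilde{f}_i\big(y_j({|y_j|}_0)\big)\quad\text{in } C_1^{\mathrm{aff}}\ (\text{resp. } {C'_1}^{\mathrm{aff}}),
\end{equation*}
which follows from Proposition \ref{perfect isomorphism} together with the affinization formula: for $i\neq 0$ no $0$-block is added so ${|y_j'|}_0={|y_j|}_0$, while for $i=0$ exactly one $0$-block is added so ${|y_j'|}_0={|y_j|}_0+1$, matching $\tilde{f}_0(b(n))=(\tilde{f}_0b)(n+1)$.

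For reducedness I would argue as follows. Since the pre-$i$-signature of $Y$ is the concatenation of the intrinsic signatures $\text{sign}_i(y_k)$, which record precisely $\varepsilon_i$ and $\varphi_i$ of $\psi(y_k)$ (resp. $\psi'(y_k)$) and are unchanged under affinization, the tensor product rule for Young walls is exactly the iterated crystal tensor product rule on the affinized columns. Hence, if $\tilde{F}_i$ selects the column $y_j$, the standard compatibility of the signature rule with the two-fold rule gives $\varphi_i(y_{j+1})\le\varepsilon_i(y_j)$ and $\varphi_i(y_j)>\varepsilon_i(y_{j-1})$, so that on each affected pair the operator acts within that pair:
\begin{equation*}
	\tilde{f}_i\big(y_{j+1}({|y_{j+1}|}_0)\otimes y_j({|y_j|}_0)\big)=y_{j+1}({|y_{j+1}|}_0)\otimes y_j'({|y_j'|}_0),
\end{equation*}
and symmetrically for $(y_j,y_{j-1})$. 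By Lemma \ref{constant} the affine energy function $H^{\mathrm{aff}}$ is constant on connected components of $B^{\mathrm{aff}}\otimes B^{\mathrm{aff}}$, hence invariant under $\tilde{f}_i$; since it equalled $0$ on both pairs before the move, it still equals $0$ afterward, which is exactly the reduced condition of Definition \ref{def:reduced} for $\tilde{F}_iY$.

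It remains to verify the Young wall conditions, and this is where the main work lies. Conditions (1)--(3) are immediate: the block is added at an admissible (or second admissible) $i$-slot, so $y_j'$ is again a Young column, and by Proposition \ref{perfect isomorphism} its type is the one realizing $\tilde{f}_i\psi(y_j)$, which lies in the finite list of Figure \ref{all possible Young columns D43} (resp. Figure \ref{all possible Young columns G21}). The delicate point is condition (4), the block-containment between adjacent columns: containment with the left neighbor $y_{j+1}$ is automatic since $y_{j+1}\subseteq y_j\subseteq y_j'$, but one must rule out that the newly added block in $y_j'$ occupies a position left empty in the right neighbor $y_{j-1}$. I expect this to be the main obstacle, and the plan is to dispatch it through the finite classification of reduced adjacent columns: by Proposition \ref{adjacent column} and the remark following it there are only $64$ (resp. $225$) reduced pairs $(y_j,y_{j-1})$, and using the column pattern of Figure \ref{column pattern}, the wall pattern of Figure \ref{wall pattern}, and the explicit energy values in \eqref{table D43} (resp. \eqref{table G21}) one checks in each relevant case that the slot picked out by the left-most surviving $+$ is already covered in $y_{j-1}$, so that $y_j'\subseteq y_{j-1}$. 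The operator $\tilde{E}_i$ is handled by the symmetric argument (removing the block at the right-most surviving $-$, and using that $H^{\mathrm{aff}}$ is likewise invariant under $\tilde{e}_i$ by Lemma \ref{constant}), which completes the proof.
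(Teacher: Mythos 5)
Your treatment of reducedness is exactly the paper's proof, just spelled out more carefully: the paper likewise observes that the tensor product rule forces the sign conditions at positions $k+1$ and $k-1$ (equivalently your inequalities $\varphi_i(y_{k+1})\le\varepsilon_i(y_k)$ and $\varphi_i(y_k)>\varepsilon_i(y_{k-1})$), concludes that $(y_{k+1},y_k),(y_{k+1},z_k)$ and $(y_k,y_{k-1}),(z_k,y_{k-1})$ lie in the same connected components of the affinized tensor product, and then invokes Lemma \ref{constant} to keep $H^{\mathrm{aff}}=0$ on the two affected pairs, all other pairs being untouched. Your identity $y_j'({|y_j'|}_0)=\tilde f_i\bigl(y_j({|y_j|}_0)\bigr)$, with the $0$-block count matching the affinization shift $\tilde f_0(b(n))=(\tilde f_0 b)(n+1)$, is a useful sharpening of the paper's phrase ``in the same connected component,'' but it is the same mechanism, resting on the same two ingredients (Proposition \ref{perfect isomorphism} and Lemma \ref{constant}).

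Where you genuinely go beyond the paper is condition (4) of Definition \ref{def:Young wall}: the paper's proof says nothing about why the block added to $y_k$ cannot protrude past the right-hand neighbour $y_{k-1}$ (nor, for $\tilde E_i$, why the removed block is not needed to support $y_{k+1}$), and you correctly isolate this as the remaining point. Your plan of a finite check over the classified reduced adjacent pairs would work, but it can be shortcut: once the two new pairs are known to satisfy $H^{\mathrm{aff}}=0$, the argument proving Proposition \ref{adjacent column}(2) --- which uses only the energy values \eqref{value of energy function D43}, \eqref{value of energy function G21} and the column/wall patterns, not condition (4) itself --- gives $|y_{k-1}|-|z_k|\ge 0$; since Young columns occupying a fixed position of the wall pattern form a chain under containment (each Kashiwara operator adds a single block of the pattern), this yields $z_k\subseteq y_{k-1}$, i.e.\ condition (4), without any case analysis. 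So: correct, same core argument as the paper, plus a sound (if heavier than necessary) repair of a step the paper leaves implicit.
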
 

\begin{proof}
Let $Y=(\cdots,y_{k+1},y_k,y_{k-1},\cdots,y_2,y_1)\in\mathcal Y(\lambda)$. We assume that $$\tilde{F}_iY=(\cdots,y_{k+1},z_k,y_{k-1},\cdots,y_2,y_1),$$ 
where $z_k$ is obtained by adding an $i$-block on $y_k$. Then in the $i$-signature of $Y$, there is at least one $+$ at the $k$'th position and there is no $+$ at $(k+1)$'th position. Moreover, there is no $-$ at $(k-1)$'th position.

Thus, $(y_{k+1},y_k)$ (resp. $(y_k,y_{k-1})$) and $(y_{k+1},z_k)$ (resp. $(z_k,y_{k-1})$) are in the same connect component.  By  Lemma \ref{constant}, we have
\begin{align*}
&H^{\mathrm{aff}}(y_{k+1}({|y_{k+1}|}_0)\otimes y_k({|y_k|}_0
))=H^{\mathrm{aff}}(y_{k+1}({|y_{k+1}|}_0)\otimes z_k({|z_k|}_0
))=0,\\
&H^{\mathrm{aff}}(y_{k}({|y_{k}|}_0)\otimes y_{k-1}({|y_{k-1}|}_0
))=H^{\mathrm{aff}}(z_{k}({|z_{k}|}_0)\otimes  y_{k-1}({| y_{k-1}|}_0
))=0.
\end{align*} 
Since the only columns adjacent to $z_k$ are $y_{k-1}$ and $y_{k+1}$, in $\tilde{F}_iY$, we have 
$$H^{\mathrm{aff}}(y_{j+1}({|y_{j+1}|}_0)\otimes y_j({|y_j|}_0
))=0$$ 
for $j\neq k-1,k$. 

By the above discussion, we can see that the Young wall $\tilde{F}_iY$ is reduced. We can prove that $\tilde{E}_iY$ is reduced in the same way.
\end{proof}

By Proposition \ref{prop:close}, we have the following theorem.

\begin{Thm}\label{thm:affine crystal structure}
	The maps $\wt : \mathcal Y(\lambda) \rightarrow P$,
	$\tilde{E}_i,\tilde{F}_i : \mathcal Y(\lambda) \rightarrow \mathcal Y(\lambda)\cup \{0\}$ 
	and $\epsilon_i,\phi_i : \mathcal Y(\lambda) \rightarrow \Z$ define an affine crystal
	structure on the set $\mathcal Y(\lambda)$.
\end{Thm}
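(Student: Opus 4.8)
The plan is to verify directly the seven defining conditions of an affine crystal from the first definition in the excerpt, using Proposition \ref{prop:close} to guarantee that $\tilde{E}_i$ and $\tilde{F}_i$ map $\mathcal Y(\lambda)$ into $\mathcal Y(\lambda)\cup\{0\}$. Condition (7) is automatic here: since $\phi_i$ takes values in $\Z$ (never $-\infty$), it is vacuously satisfied. Conditions (2) and (3) are immediate from the definition of $\wt$: applying $\tilde{F}_i$ adds exactly one $i$-block and hence raises $k_i$ by one, so $\wt(\tilde{F}_iY)=\wt(Y)-\alpha_i$, and dually $\wt(\tilde{E}_iY)=\wt(Y)+\alpha_i$. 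Thus the substance of the proof lies in conditions (1), (4), (5) and (6), all of which concern the combinatorics of the $i$-signature.

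The central observation I would exploit is that the tensor product rule for Young walls is precisely the bracketing (signature) rule governing iterated tensor products of crystals. By Proposition \ref{perfect isomorphism}, each Young column $y_k$ corresponds under $\psi$ (resp. $\psi'$) to an element of the perfect crystal $B_1$ (resp. $B'_1$), and the per-column datum $\mathrm{sign}_i(y_k)$ recorded in Figure \ref{all possible Young columns D43} and Figure \ref{all possible Young columns G21} encodes exactly the numbers $\epsilon_i$ and $\phi_i$ of that perfect-crystal element. Concatenating these local signatures and cancelling $(+,-)$ pairs is the standard computation of $\epsilon_i$, $\phi_i$ and of $\tilde{E}_i$, $\tilde{F}_i$ on a tensor product $B_1\otimes B_1\otimes\cdots$. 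Consequently, once the per-column identification is in place, conditions (1), (4), (5), (6) reduce to the well-known fact that the signature calculus defines a crystal structure: in the reduced signature $\underbrace{-\cdots-}_{\epsilon_i}\underbrace{+\cdots+}_{\phi_i}$, the operator $\tilde{E}_i$ deletes the right-most $-$ (so $\epsilon_i$ drops by one and the vacated slot becomes a $+$, raising $\phi_i$ by one), and $\tilde{F}_i$ inserts at the left-most $+$ (so $\phi_i$ drops by one and $\epsilon_i$ rises by one), giving (4) and (5); and the left-most $+$ after cancellation is exactly the position that becomes the new right-most $-$ upon adding a block, which yields the inverse relation (6).

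For condition (1) I would argue that $\phi_i(Y)-\epsilon_i(Y)=\langle h_i,\wt(Y)\rangle$. Since cancellation of a $(+,-)$ pair changes neither side of this identity, it suffices to check it on the unreduced concatenated signature, hence column by column; and for a single column this is exactly condition (1) in the crystal $C_1\cong B_1$ (resp. $C'_1\cong B'_1$), which holds by Proposition \ref{perfect isomorphism}. Summing over the finitely many non-ground-state columns then gives the identity for $Y$. One should also record that the pre-$i$-signature, though formally infinite, reduces to a finite sequence, because all but finitely many columns of $Y$ coincide with the ground-state columns and contribute no uncancelled signs.

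The hard part will be the per-column bookkeeping underlying the whole reduction, namely checking that for every admissible column type the local $i$-signature listed in Figure \ref{all possible Young columns D43} and Figure \ref{all possible Young columns G21} genuinely matches $\epsilon_i$ and $\phi_i$ of the corresponding element of $B_1$ or $B'_1$, and that adding or removing a single $i$-block moves a column to the adjacent column type compatibly with the signature rule. This is delicate precisely for the exceptional columns $c'_5$ and $c'_7$, whose removability and admissibility conventions are fixed by the Remark following Figure \ref{all possible Young columns G21}; I would verify these cases against the crystal graph of $B'_1$ in Example \ref{ex:level 1 perfect crystal G21} by hand. Once this finite check is complete, Proposition \ref{prop:close} together with the tensor-product signature calculus assembles the seven axioms, establishing the affine crystal structure on $\mathcal Y(\lambda)$.
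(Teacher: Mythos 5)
Your strategy is the same one the paper relies on: Proposition \ref{prop:close} provides closure of $\mathcal Y(\lambda)$ under $\tilde E_i,\tilde F_i$, and the crystal axioms are then checked through the signature calculus combined with the column-level identification $C_1\cong B_1$, $C_1'\cong B_1'$ of Proposition \ref{perfect isomorphism} (the paper states the theorem as an immediate consequence of Proposition \ref{prop:close} and leaves this verification implicit). Your handling of conditions (2), (3), (7), and your reduction of (4)--(6) to the finite per-column bookkeeping, including the special cases $c'_5$, $c'_7$, is sound.

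There is, however, a genuine error in your argument for condition (1), located exactly in your treatment of the semi-infinite tail. It is false that the ground-state columns ``contribute no uncancelled signs.'' A ground-state column corresponds under $\psi$ to the minimal vector $b_\lambda$, which satisfies $\varepsilon(b_\lambda)=\varphi(b_\lambda)=\lambda$, so its $i$-signature is $(-^{\lambda(h_i)},+^{\lambda(h_i)})$: concretely, for $U_q(D_4^{(3)})$, $\lambda=\Lambda_0$, the ground-state column is the $c_\phi$-shaped one with $\mathrm{sign}_0=(-,+)$ (Figure \ref{all possible Young columns D43}), and for $U_q(G_2^{(1)})$, $\lambda=\Lambda_2$, it is $c'_{\bar7}$-shaped with $\mathrm{sign}_2=(-,+)$ (Figure \ref{all possible Young columns G21}). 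In the infinite concatenation $\cdots(-,+)(-,+)(-,+)$ each column's $+$'s cancel the $-$'s of the column to its right, so the tail telescopes to exactly $\lambda(h_i)$ surviving $+$'s, not to the empty word. This surviving block is precisely what produces the term $\langle h_i,\lambda\rangle$ in condition (1): per column one has $\varphi_i(y_k)-\varepsilon_i(y_k)=\langle h_i,\wt(\psi(y_k))\rangle$, while $\wt(Y)=\lambda-\sum_i k_i\alpha_i$ gives $\langle h_i,\wt(Y)\rangle=\langle h_i,\lambda\rangle+\sum_k\langle h_i,\wt(\psi(y_k))\rangle$, so your column-by-column summation yields $\phi_i(Y)-\epsilon_i(Y)=\langle h_i,\wt(Y)\rangle-\langle h_i,\lambda\rangle$, i.e.\ condition (1) fails by $\langle h_i,\lambda\rangle$ whenever $\lambda(h_i)\neq 0$. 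The discrepancy is already visible on the ground-state wall: your computation gives $\phi_0(Y_{\Lambda_0})=0$ for $U_q(D_4^{(3)})$, whereas the correct value is $\phi_0(Y_{\Lambda_0})=1=\langle h_0,\Lambda_0\rangle$, and that single $+$ surviving from the tail is exactly what makes $\tilde F_0 Y_{\Lambda_0}\neq 0$ in Figure \ref{top part 0 D43}. (Note also that ``cancellation preserves $\#(+)-\#(-)$'' is only meaningful for finite words, so one cannot transfer the identity to the infinite unreduced signature as you do.) The repair is routine --- record the tail's net contribution of $\lambda(h_i)$ plus signs and add it to your finite sum --- but the step as written fails.
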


\begin{Thm}\label{thm:main}
	There exists an isomorphism of $U_q(D_4^{(3)})$ (resp. $U_q(G_2^{(1)})$)-crystals
	\begin{equation*}
		\mathcal Y(\lambda) \stackrel{\sim} \longrightarrow B(\lambda)
		\ \ \text{given by} \ \ 
		Y_{\lambda} \longmapsto u_{\lambda},
	\end{equation*}
	where $u_{\lambda}$ is the highest weight vector in $B(\lambda)$.
\end{Thm}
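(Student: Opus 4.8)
The plan is to factor the desired isomorphism through the path realization of Proposition~\ref{prop:path realization}, thereby reducing the theorem to the statement that $\mathcal Y(\lambda)$ is isomorphic to the set of $\lambda$-paths $\mathcal P(\lambda)$ as affine crystals, under a correspondence sending the ground-state wall $Y_\lambda$ to the ground-state path $\mathbf p_\lambda$. First I would introduce the \emph{column-reading map}: given a reduced Young wall $Y=(\cdots,y_1,y_0)\in\mathcal Y(\lambda)$, each column $y_i$ is a Young column whose equivalence class, under the crystal isomorphism $\psi$ (resp.\ $\psi'$) of Proposition~\ref{perfect isomorphism}, yields an element of the perfect crystal; I set $\Phi(Y)=\cdots\otimes\psi(y_1)\otimes\psi(y_0)\in B_1^{\otimes\infty}$ (resp.\ ${B'_1}^{\otimes\infty}$).

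The first point to check is that $\Phi(Y)$ genuinely lies in $\mathcal P(\lambda)$. By Proposition~\ref{adjacent column}(2), for reduced adjacent columns the quantity $|y_i|-|y_{i+1}|$ is a fixed non-negative integer, so the sizes $|y_i|$ are non-increasing in $i$; being non-negative integers they stabilize, and the adjacency analysis forces the stable value to be that of the ground-state columns, so all but finitely many $y_i$ are ground-state columns. These map under $\psi$ precisely to the entries $b_k$ of $\mathbf p_\lambda$, whence $\Phi(Y)$ agrees with the ground-state path far to the left and is a $\lambda$-path. Conversely, given $\mathbf p=\cdots\otimes p(1)\otimes p(0)\in\mathcal P(\lambda)$ I would reconstruct a wall by stacking the columns $\psi^{-1}(p(k))$, using Proposition~\ref{adjacent column}(1) to fix the number of $0$-blocks, hence the height, of each column from that of its right neighbour; reducedness selects the admissible representative of each equivalence class uniquely. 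This furnishes a two-sided inverse, so $\Phi$ is a bijection with $\Phi(Y_\lambda)=\mathbf p_\lambda$.

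Next I would verify that $\Phi$ intertwines all the crystal data. The weight, $\varepsilon_i$ and $\varphi_i$ agree once one establishes that the $i$-signature $(-^{\,r},+^{\,a})$ attached to each Young column in Figures~\ref{all possible Young columns D43} and \ref{all possible Young columns G21} records exactly $(\varepsilon_i,\varphi_i)$ of the corresponding perfect-crystal element in Examples~\ref{ex:level 1 perfect crystal D43} and \ref{ex:level 1 perfect crystal G21}; this is a finite comparison against the crystal graphs. Granting it, the tensor product rule for Young walls---concatenate the column signatures, cancel $(+,-)$ pairs, and act at the extremal uncancelled sign---is literally the signature form of the crystal tensor product rule on $B_1^{\otimes\infty}$, so $\tilde F_i,\tilde E_i$ on walls correspond to $\tilde f_i,\tilde e_i$ on paths. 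That applying a Kashiwara operator keeps us simultaneously inside $\mathcal Y(\lambda)$ and $\mathcal P(\lambda)$ is exactly Proposition~\ref{prop:close}, whose proof invokes Lemma~\ref{constant} (constancy of $H^{\mathrm{aff}}$ on connected components) to guarantee that the altered adjacent pairs still have vanishing affine energy. Composing $\Phi$ with $\Psi_\lambda^{-1}$ then produces the crystal isomorphism $\mathcal Y(\lambda)\stackrel{\sim}{\longrightarrow}B(\lambda)$ with $Y_\lambda\mapsto\mathbf p_\lambda\mapsto u_\lambda$.

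The hard part will be the signature identification together with the consistency of the height bookkeeping governed by the energy tables \eqref{table D43} and \eqref{table G21}: one must confirm that the column-reading map and its inverse are well defined on actual reduced walls, not merely on equivalence classes, and that no Kashiwara operator can push a column outside the finite list of admissible shapes or violate the containment condition~(4) of Definition~\ref{def:Young wall}. This is where the special conventions for the columns $c'_5$ and $c'_7$, and the exhaustive adjacency count underlying Proposition~\ref{adjacent column}, carry the real weight of the argument.
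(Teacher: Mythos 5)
Your proposal is correct and takes essentially the same route as the paper: reduce to $\mathcal Y(\lambda)\cong\mathcal P(\lambda)$ via Proposition~\ref{prop:path realization}, define the column-reading map $\Phi$ through $\psi$ of Proposition~\ref{perfect isomorphism}, prove bijectivity using Proposition~\ref{adjacent column} (injectivity from the fixed height differences, surjectivity by stacking reduced adjacent columns), and obtain commutation with the Kashiwara operators from the tensor product rule together with Proposition~\ref{prop:close}. The additional details you flag (stabilization to ground-state columns, the signature identification) are points the paper treats as immediate, so your argument does not genuinely diverge from it.
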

\begin{proof}
We only give the proof for the case of $U_q(D_4^{(3)})$. The proof for the case of $U_q(G_2^{(1)})$ is similar.
	
By Proposition \ref{prop:path realization}, we only need to prove $\mathcal Y(\lambda)\cong\mathcal P(\lambda)$. By Proposition \ref{perfect isomorphism}, we define a map $\Phi:\mathcal Y(\lambda)\to\mathcal P(\lambda)$ as follows.
	\begin{align*}
		\Phi:\mathcal Y(\lambda)\to\mathcal P(\lambda),\quad (\cdots,y_1,y_0)\mapsto (\cdots,\psi(y_1),\psi(y_0)).
	\end{align*}
	Using the tensor product rule on Young walls, it is straightforward to 
	check that $\Phi$ commutes with the Kashiwara operators, i.e., 
	\begin{equation*}
		\tilde{e}_{i} \circ \Phi = \Phi \circ \tilde{E}_{i},\ \ \tilde{f}_{i} \circ \Phi = \Phi \circ \tilde{F}_{i} \ \ \text{for all} \ i \in I.
	\end{equation*}

Let $\mathbf{p}=(p(k))^{\infty}_{k=0}$ be a path in $\mathcal P(\lambda)$. Let $Y=(\cdots,y_1,y_0)$ and $Y'=(\cdots,z_1,z_0)$ be reduced Young walls in $\mathcal Y(\lambda)$ such that $$(\cdots,\psi(y_1),\psi(y_0))=(\cdots,\psi(z_1),\psi(z_0))=\mathbf{p}.$$
Let $d_i$ and $d_{i+1}$ $(i\geq 0)$ denote the Young columns corresponding to $p(i)$ and $p(i+1)$, respectively. We must have
\begin{equation*}
y_i=z_i=d_i,\quad y_{i+1}=z_{i+1}=d_{i+1}.
\end{equation*}
By Proposition \ref{adjacent column}, we have 
\begin{equation*}
|y_i|-|y_{i+1}|=|z_i|-|z_{i+1}|\ \text{for all}\ i\geq 0.
\end{equation*}
Thus we have $Y=Y'$. Hence the map $\Phi$ is an injective. 

For a given path $\mathbf{p}=(p(k))^{\infty}_{k=0}$, we can first draw  the reduced adjacent columns $(y_1,y_0)$ corresponding to $(p(1),p(0))$, and then we draw the reduced adjacent columns $(y_2,y_1)$ corresponding to $(p(2),p(1))$, and so on. Finally, we can obtain a reduced Young wall $(\cdots, y_1,y_0)$ corresponding to the path $\mathbf{p}$. Hence the map $\Phi$ is surjective. 

Combining the above discussions, our assertion holds as desired.
\end{proof}

The top part of the crystal  $\mathcal Y(\Lambda_0)$ for $U_q(D_4^{(3)})$ and the top part of the crystals $\mathcal Y(\Lambda_0)$ and $\mathcal Y(\Lambda_2)$ for $U_q(G_2^{(1)})$ are given as follows (see Figure~\ref{top part 0 D43}, Figure~\ref{top part 0 G21} and Figure~\ref{top part 2 G21}).

\newpage

\begin{figure}[H]
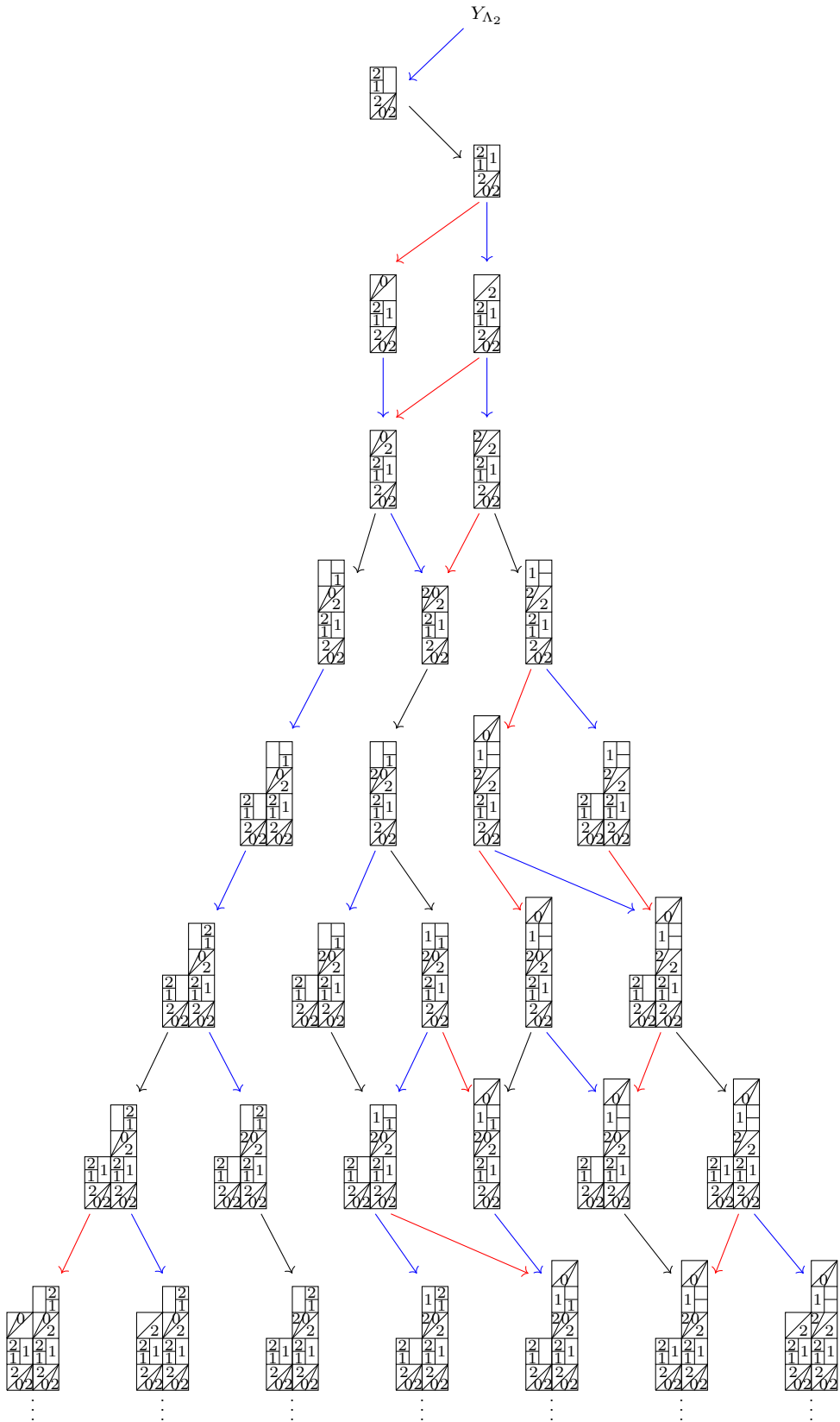

	\centering

	\vskip -2mm
	\caption{The top part of the crystal $\mathcal Y(\Lambda_2)$ for $U_q(G_2^{(1)})$ }\label{top part 2 G21}
\end{figure}


\begin{thebibliography}{}	

\bibitem{HKL04}
J. Hong, S.-J.~Kang, H. Lee, \emph{ Young wall realization of crystal graphs for $U_q(C^{(1)}_n)$}, Comm. Math. Phys \textbf{244} (2004), 111--131.

\bibitem{JM11} R. L. Jayne, K. C. Misra,\emph{ On Demazure crystals for $U_q(G_2^{(1)})$}, Proc. Amer. Math. Soc.
\textbf{139} (2011), 2343-2356.

\bibitem{Kang03}
S.-J.~Kang, \emph{Crystal bases for quantum affine algebras and combinatorics of Young walls},
Proc. London Math. Soc. \textbf{86}
(2003), 29--69.

\bibitem{KMN1}
S-J. Kang, M. Kashiwara, K. C. Misra, T. Miwa, T. Nakashima,  A. Nakayashiki,
\emph{Affine crystals and vertex models},
Int. J. Mod. Phys. \textbf {7} (suppl. 1A) (1992), 449-484.

\bibitem{KMN2}
S-J. Kang, M. Kashiwara, K. C. Misra, T. Miwa, T. Nakashima,  A. Nakayashiki,
\emph{Perfect crystals of quantum affine Lie algebras},
Duke Math. J. \textbf{68} (1992), 499-607.

\bibitem{KL06}
S.-J. Kang, H.~Lee, \emph{Higher level affine crystals and {Y}oung walls},
Algebr. Represent. Theory \textbf{9} (2006), 593--632.

\bibitem{KL09}
S.-J. Kang, H. Lee, \emph{Crystal bases for quantum affine algebras and Young walls}, J. Algebra \textbf{322} (2009), 1979--1999.

\bibitem{Kas90}
M.~Kashiwara, \emph{Crystalizing the $q$-analogue of universal enveloping
	algebras}, Comm. Math. Phys. \textbf{133} (1990), 249--260.

\bibitem{Kas91}
M.~Kashiwara, \emph{On crystal bases of the $q$-analogue of universal enveloping
	algebras}, Duke Math. J. \textbf{63} (1991), 465--516.

\bibitem{KMOY} 
M. Kashiwara, K. C. Misra, M. Okado and D. Yamada,
\emph{Perfect crystals for $U_q(D_4^{(3)})$}, J. Algebra \textbf{317} (2007), 392-423.

\bibitem{KN94} M. Kashiwara, T. Nakashima, 
\emph{Crystal graphs for representations of the $q$-analogue of classical Lie algebras}, 
J. Algebra \textbf{165} (1994), 295--345.

\bibitem{KS19}
J. A. Kim, D. U. Shin, \emph{Strip bundle realization of the crystals over $U_q(G_2^{(1)})$}, Journal of Mathematical Physics \textbf{60} (2019), 111703.

\bibitem{KS22}
J. A. Kim, D. U. Shin, \emph{Strip bundle realization of the crystals over $U_q(D_4^{(3)})$}, Journal of Mathematical Physics \textbf{63} (2022), 121701.

\bibitem{M05} 
K. C. Misra, \emph{On Demazure crystals for $U_q(D_4^{(3)})$}, 
Contemporary Math. \textbf{392} (2005), 83-93.

\bibitem{MMO10} 
K.C. Misra, M. Mohamad, M. Okado,
\emph{Zero action on perfect crystals for $U_q(G_2^{(1)})$},
SIGMA \textbf{6} (2010), 022, (12 pages).

\bibitem{NZ97} T. Nakashima, A. Zelevinsky, \emph{Polyhedral realizations of crystal bases for quantized Kac-Moody algebras}, Adv. Math. \textbf{131} (1997), 253--278.

\bibitem{Nak99}
T. Nakashima, \emph{Polyhedral realizations of crystal bases for integrable highest
weight modules}, J. Algebra \textbf{219} (1999), 571-597.

\bibitem{Ya98}
S. Yamane,\emph{ Perfect crystals of $U_q(G_2^{(1)})$}, J. Algebra  \textbf{210} (1998), 440-486.

\end{thebibliography}
\end{document}